\newcommand{\Sch}{Schr\"odinger }
\DeclareMathOperator{\Ai}{Ai}
\newcommand{\set}[1]{\{#1\}}
\newcommand{\ihbar}{{\frac{i}{h}}}
\newcommand{\ZN}{|Z_{\Phi_{\hbar, E}}|}
\newcommand{\inprod}[2]{\ensuremath{\left\langle#1,#2\right\rangle}}
\newcommand{\twiddle}[1]{\ensuremath{\widetilde{#1}}}
\newcommand{\W}{\ensuremath{\Omega}}
\newcommand{\gives}{\ensuremath{\rightarrow}}
\newcommand{\x}{\ensuremath{\times}}
\newcommand{\abs}[1]{\left\lvert #1 \right\rvert}
\newcommand{\norm}[1]{\left\lVert#1\right\rVert}
\newcommand{\lr}[1]{\ensuremath{\left(#1\right)}}
\newcommand{\dell}{\ensuremath{\partial}}
\renewcommand{\Re}{{\text Re}}
\newcommand{\lan}{\langle}
\newcommand{\ran}{\rangle}
\newcommand{\red}{\color{red}}
\newcommand{\cs}{$\clubsuit$}
\newcommand{\edit}[1]{ {\red \cs #1 \cs}}
\newcommand{\kk}{\left(\frac{k}{ 2\pi}\right)}
\newcommand{\Hb}{{\mathbb H}}
\newcommand{\la}{\langle}
\newcommand{\ra}{\rangle}
\newcommand{\bpp}{\begin{prop}}
\newcommand{\epp}{\end{prop}}
\renewcommand{\b}{\bar}
\newcommand{\z}{\text}
\renewcommand{\ss}{\subsection}
\DeclareMathOperator{\Vol}{Vol}
\DeclareMathOperator{\Erf}{Erf}
\renewcommand{\Re}{\text{Re}}
\renewcommand{\Im}{\text{Im}}
\newcommand{\Szego}{Szeg\"o }
\newcommand{\szego}{Szeg\"o }
\newcommand{\Sj}{Sj\"ostrand }
\newcommand{\kahler}{K\"ahler }
\newcommand{\Kahler}{K\"ahler }
\newcommand{\C}{\mathbb{C}}
\newcommand{\E}{\mathbb{E}}
\renewcommand{\H}{\mathbb{H}}
\newcommand{\CP}{\mathbb{CP}}
\newcommand{\R}{\mathbb{R}}
\newcommand{\Z}{\mathbb{Z}}
\renewcommand{\P}{\mathbb{P}}
\newcommand{\acal}{\mathcal{A}}
\newcommand{\bcal}{\mathcal{B}}
\newcommand{\ccal}{\mathcal{C}}
\newcommand{\dcal}{\mathcal{D}}
\newcommand{\fcal}{\mathcal{F}}
\newcommand{\hcal}{\mathcal{H}}
\newcommand{\jcal}{\mathcal{J}}
\newcommand{\kcal}{\mathcal{K}}
\newcommand{\lcal}{\mathcal{L}}
\newcommand{\ncal}{\mathcal{N}}
\newcommand{\ocal}{\mathcal{O}}
\newcommand{\scal}{\mathcal{S}}
\newcommand{\ucal}{\mathcal{U}}
\newcommand{\wcal}{\mathcal{W}}
\newcommand{\wb}{\overline}
\newcommand{\hPi}{\h \Pi}
\newcommand{\h}{\hat}
\newcommand{\bma}{\begin{pmatrix}}
\newcommand{\ema}{\end{pmatrix}}
\newcommand{\baa}{\begin{align*}}
\newcommand{\eaa}{\end{align*}}
\newcommand{\bea}{\begin{eqnarray*} }
\newcommand{\eea}{\end{eqnarray*} }
\newcommand{\bee}{\begin{eqnarray} }
\newcommand{\eee}{\end{eqnarray} }
\newcommand{\be}{\begin{equation} }
\newcommand{\ee}{\end{equation} }
\newcommand{\bp}{\begin{prop}}
\newcommand{\ep}{\end{prop}}
\newcommand{\bt}{\begin{theo}}
\newcommand{\et}{\end{theo}}
\newcommand{\bpf}{\begin{proof}}
\newcommand{\epf}{\end{proof}}
\newcommand{\bl}{\begin{lem}}
\newcommand{\el}{\end{lem}}
\newcommand{\bc}{\begin{cor}}
\newcommand{\ec}{\end{cor}}
\newcommand{\bd}{\begin{defn}}
\newcommand{\ed}{\end{defn}}
\newcommand{\bcs}{\begin{cases}}
\newcommand{\ecs}{\end{cases}}
\newcommand{\bex}{\begin{example}}
\newcommand{\eex}{\end{example}}
\newcommand{\brem}{\begin{rem}}
\newcommand{\erem}{\end{rem}}
\newcommand{\bnum}{\begin{enumerate}}
\newcommand{\enum}{\end{enumerate}}
\newcommand{\pa}{\partial}
\newcommand{\ot}{\otimes}
\newcommand{\half}{\frac{1}{2}}
\newcommand{\ddbar}{\partial\dbar}
\newcommand{\RM}{\backslash}
\def\Xint#1{\mathchoice
{\XXint\displaystyle\textstyle{#1}}%
{\XXint\textstyle\scriptstyle{#1}}%
{\XXint\scriptstyle\scriptscriptstyle{#1}}%
{\XXint\scriptscriptstyle\scriptscriptstyle{#1}}%
\!\int}
\def\XXint#1#2#3{{\setbox0=\hbox{$#1{#2#3}{\int}$ }
\vcenter{\hbox{$#2#3$ }}\kern-.6\wd0}}
\def\dashint{\Xint-}
\newtheorem{remark}{{\sc Remark}}
\newtheorem*{maintheo}{{\sc {\bf Main Theorem}}}
\newtheorem{theointro}{{\sc {\bf Theorem}}}
\newtheorem{theo}{{\sc Theorem}}[section]
\newtheorem{cor}[theo]{{\sc Corollary}}
\newtheorem{defin}[theo]{{\sc Definition}}
\newtheorem{lem}[theo]{{\sc Lemma}}
\newtheorem{prop}[theo]{{\sc Proposition}}
\newtheorem{defn}[theo]{{\sc Definition}}
\newtheorem{rem}[theo]{{\sc rem}}
\newtheorem{example}[theo]{{\sc Example}}
\newcommand{\Hess}{{\operatorname {Hess}}}
\title{Interfaces in spectral asymptotics and nodal sets}
\author{Steve Zelditch}
\address{Department of Mathematics, Northwestern  University, Evanston, IL 60208, USA}
\email{zelditch@math.northwestern.edu, \,pzhou.math@gmail.com}
\thanks{Research partially supported by NSF grant  DMS-1810747}
\date{\today}
\begin{document}

\begin{abstract} This is a survey of results obtained jointly with Boris Hanin and Peng Zhou on interfaces in spectral asymptotics, both for \Sch operators on $L^2(\R^d)$
and for Toeplitz Hamiltonians acting on holomorphic sections of ample line bundles $L \to M$ over \kahler manifolds $(M, \omega)$.
By an interface is meant a hypersurface, either in physical space $\R^d$ or in phase space, separating an allowed region where
spectral asymptotics are standard and a forbidden region where they are non-standard. The main question is to give the detailed
transition between the two types of asymptotics across the hypersurface (i.e. interface). In the real \Sch setting, the asymptotics
are of Airy type; in the \kahler setting they are of Erf (Gaussian error function) type. 

A principal purpose of this survey is to compare the results in the two settings. Each is apparently universal in its setting. This
is now established for Toeplitz operators,  but in the \Sch setting it is only established for the simplest model operator, the isotropic
harmonic oscillator. It is explained that the latter result is most comparable to the behavior of the canonical degree operator on 
the Bargmann-Fock space of a line bundle, a new construction introduced in these notes.

\end{abstract}

\maketitle


\section{Introduction}

This is a mainly expository article on interfaces in spectral asymptotics.
Interfaces  are studied in many fields of mathematics and physics but seem to be  a  novel area of spectral
asymptotics. Spectral asymptotics refers to the behavior of spectral projections and nodal sets for a  quantum Hamiltonian $\hat{H}_{\hbar}$, which might be a \Sch operator on $L^2(\R^d)$ or on a Riemannian manifold
$(M, g)$, with or without boundary, or a Toeplitz Hamiltonian acting on holomorphic sections $H^0(M, L^k)$ of line bundles over a \kahler manifold.  Interface asymptotics refers
to the change in behavior of the spectral projections or nodal sets as a hypersurface is crossed, either in physical space (configuration space) or in phase space. Interfaces  exist in diverse settings and indeed the purpose of this article is to compare interface behavior in different settings and to consider possible future settings that have yet to be explored.

What is meant by an `interface' in the sense of this article? The general idea
is that there is a hypersurface in the  phase
space separating two regions in which  the asymptotic behavior of a spectral projections kernel has different types of behavior: In the first,  that we will term the  `allowed' region, the asymptotics are constant and, after normalization, equal $1$, so that one has a  plateau over the region; in the second `forbidden' region the asymptotics are rapidly decaying, so that one
has a rather flat $0$ region. The interface is the shape of the graph of the spectral kernel connecting $1$ and $0$  in a thin region separating the allowed and forbidden region.  One expects that when scaled properly, the limit shape is universal. More precisely, universality
holds in each type of model (e.g. \Sch or \kahler) but is model-dependent: one expects `Airy interfaces' in the \Sch setting and
Erf interfaces in the \kahler setting. The separation into  different regions for the spectral projections kernel often coincides with the separation of other spectral behavior, such as nodal sets of the eigenfunctions.

The terminology (classically) `allowed' and (classically) `forbidden' is standard in quantum mechanics 
for regions inside, resp. outside, of an energy surface in phase space, or more commonly, the projection of these regions to configuration space. This will indeed be the meaning of `interface' for most of this article. We will describe results of B. Hanin, P. Zhou and the author \cite{HZZ15,HZZ16} on the different behavior of nodal
sets of \Sch eigenfunctions in allowed resp. forbidden regions for the simplest \Sch Hamiltonian $\hat{H}_{\hbar}$, namely the isotropic Harmonic oscillator on $\R^d$. We then consider phase space interfaces of Wigner distributions for the same model, following \cite{HZ19,HZ19b}. We then turn to phase space interfaces in the \kahler (complex holomorphic) setting, and discuss results of   Pokorny-Singer \cite{PS},
Ross-Singer \cite{RS}, P.  Zhou and the author \cite{ZZ16,ZZ17}
on interfaces for {\it partial Bergman kernel} asymptotics. In Section \ref{BFLB} we explain that the exact analogue of the results
on Wigner distributions for the isotropic harmonic oscillator in the complex setting is a series of results on interfaces for disc bundles
in the Bargmann-Fock space of a line bundle. This Bargmann-Fock space and the interface results constitute the new results of 
the article.

Roughly speaking, interfaces in spectral asymptotics involve two types of localization: (i) spectral, i.e. quantum, 
localization where the eigenvalues are constrained to lie in an interval $I$, (ii) classical, i.e. phase space, localization where
a phase space point is constrained to lie in an open set $U $ of phase space. It has long been understood that spectral localization
$E_j(\hbar) \in I$ implies phase space localization in the sense that quantum objects decay in the complement of the allowed region $H^{-1}(I)$. But  the study of interfaces is devoted to the precise behavior of quantum objects as one crosses the interface between allowed and forbidden regions, and more generally, considers all possible combinations of spectral localization $E_j(\hbar) \in I$ and phase space localization
$\zeta \in  U$, where $U$ may have any position relative to $H^{-1}(I)$.

Often, the interface corresponds to a sharp cutoff in a spectral parameter and signals something discontinuous. In fact, the earliest studies
of interface asymptotics are classical analysis studies of Bernstein polynomials of discontinuous functions with jump discontinuities \cite{Ch,L,Lev,Mir,O}. These studies were intended to be analogues of Gibbs phenomena for Fourier series of discontinuous functions, which have been
generalized to wave equations on Riemannian manifolds in \cite{PT97}.

In this article we review the following results on interface asymptotics:

\begin{itemize}
\item Interface behavior for spectral projections and for  nodal sets of random  eigenfunctions of  energy
$E_N(\hbar) = \hbar (N + \frac{d}{2}) = E$ of the
isotropic harmonic oscillator on $\R^d$ across the {\it caustic set} in physical space, where
the potential $V(x) = |x|^2/2 = E$. \bigskip

\item Interface behavior for Wigner distributions of the same eigenspace
projections, and more generally for various types of Wigner-Weyl sums  
across an energy surface in phase space; \bigskip

\item Interface behavior for the holomorphic analogues of such Wigner distributions, namely for {\it partial Bergman kernels} for general Berezin-Toeplitz Hamiltonians on general \kahler phase spaces.
\bigskip

\item Interface results for partial Bergman kernels corresponding to the canonical $S^1$ action on the total space $L^*$ of the
dual line bundle of an ample line bundle $L \to M$ over a \kahler manifold.

\end{itemize}\bigskip

In the case of \Sch operators, the results are only proved in the special
case of the isotropic harmonic oscillator. It is plausible that some of the
results should be universal among \Sch operators, but at the present time the generalizations have not been formulated or proved. See Section \ref{FURTHER} for further problems.  Among other gaps in the theory, Wigner distributions per se are only defined when the Riemannian manifold is $\R^d$ and are closely connected to the representation theory 
of the Heisenberg and metaplectic groups. Wigner distributions of eigenfunctions  are special types of ``microlocal lifts'' of eigenfunctions;  there is no generally accepted canonical microlocal lift on a general Riemannian manifold. Despite the restrictive setting, Wigner distributions  are important in 
mathematical physics, in particular in quantum optics. The results in the complex holomorphic (\kahler) setting are much more complete, due to the
fact that the theory of Bergman kernels is technically simpler and more complete than the corresponding theory of Wigner distributions for \Sch operators. The results are proved
for any Toeplitz Hamiltonian on any projective \kahler manifold. In fact, the exact analogue of the Wigner result is proved in Section
\ref{BFLB}, where a new construction is introduced in this article: the Bargmann-Fock space of a holomorphic line bundle. It is a Gaussian
space of holomorphic functions on the total space $L^*$ of the dual of a holomorphic Hermitian line bundle $L \to M$ over a \kahler manifold. 
This total space carries a natural $S^1$ action \footnote{$S^1$ always denotes the unit circle} and this $S^1$ action plays the role
of the propagator of the isotropic Harmonic Oscillator. Thus, the interfaces are the boundaries of the co-disc bundles $D^*_E \subset L^*$
of different energy levels (i.e. radii).  The interface results in Section \ref{BFLB} are a `new result' of this article, but the proofs are similar to,
and 
simpler than,  those in \cite{ZZ17,ZZ18} .

This survey is organized as follows: 

\begin{enumerate}
\item In Section \ref{SETUP},  we review the basic linear models: the Harmonic oscillator in the \Sch representation 
on  $L^2(\R^d)$ and in the Bargmann-Fock (holomorphic) representation on entire holomorphic functions on $\C^d$. We also present a list of analogies between the real \Sch setting and the complex holomorphic quantization.  Section \ref{BF} is devoted to the Bergman kernel on  Bargmann-Fock  space,
and the Bargmann-Fock representations of the Heisenberg and Symplectic groups on Bargmann-Fock space. \bigskip

\item In Section \ref{SCH}, we review the  interface results in physical space for spectral projections for the isotropic Harmonic Oscillator. 
These imply interface results for nodal sets of random eigenfunctions in a fixed eigenspace. \bigskip

\item In Section \ref{WIGNER}, we  change the setting to phase space $T^*\R^d$ and review the  interface results in physical space for Wigner distributions of spectral projections for the isotropic Harmonic Oscillator. 
 \bigskip

\item In Section \ref{pBK}, we switch to the complex holomorphic setting and review interface results for partial Bergman kernels
on general compact \kahler manifolds. \bigskip

\item In Section \ref{BFINTER} we specialize to the isotropic harmonic oscillator on the  standard Bargmann-Fock space and describe
its interfaces;  \bigskip

\item In Section \ref{BFLB} we  introduce a new model: the  Bargmann-Fock space of a holomorphic line bundle. We then
consider interfaces with respect to a natural $S^1$ action on this space,  generalalizing the previous result on the Bargmann-Fock isotropic Harmonic oscillator.
 \bigskip
 
 \item In Section \ref{FURTHER} we list some further problems on interfaces. \bigskip
 
 \item In Section \ref{BACKGROUND} we give some background to the holomorphic setting. 

\end{enumerate}

\subsection{Results surveyed in this article}

The articles surveyed in this article are the following:

\section{\label{SETUP} The basic linear models} 

As mentioned above, our aim in this survey is not only to describe interface results in various settings but to compare the
results in the real \Sch setting and the complex holomorphic Bargmann-Fock or Berezin-Toeplitz setting. The real setting
is self-explanatory to mathematical physicists but the complex holomorphic setting is probably less familiar. In this section,
we give some background on the basic linear models (isotropic Harmonic Oscillator in both settings) to make the relations
between the real and complex settings more familiar. We then give a list of analogies between the two settings.  In addition,
we present a list of open problems on interfaces to amplify the scope of spectral interface problems. It would be laborious
to present all of the background for the geometric setting before getting to the main results and phenomena, so we have put
that background into an Appendix Section \ref{BACKGROUND}.

A preliminary remark: Since the early days of quantum mechanics, it was understood that there are many equivalent representations
(or `pictures') of quantum mechanics. In the case of $\R^d$ they correspond to different but unitarily equivalent representations
of the Heisenberg and metaplectic groups (see \cite{F} for background). The most common are the \Sch representation on $L^2(\R^d)$ and the Bargmann-Fock  representation on $H^2(\C^d, e^{-|Z|^2} dL(Z))$, the Bargmann-Fock space of entire holomorphic functions on $\C^d$
which are in $L^2$ with respect to Gaussian measure; here $dL$ is Lebesgue measure.  One refers to $\R^d$ as `configuration space' or `physical space' and to $T^* \R^d$ as phase space. Of course, $T^* \R^d \simeq \C^d$, so that Bargmann-Fock space employs  a complex structure
on phase space. A natural unitary intertwining operator
is the Bargmann transform (see \eqref{BT} below). We refer to \cite{F} and to  \cite{HSj16} for background on Bargmann-Fock space and metaplectic operators.

The first item is to give background on the isotropic Harmonic oscillator in both the \Sch representation and the Bargmann-Fock
representation.

\subsection{\Sch representation of the isotropic Harmonic oscillator}

The \Sch representation of quantum mechanics is too familiar to need a detailed review here. 
The  isotropic Harmonic Oscillator on $L^2(\R^d,dx)$. is the operator,
\begin{equation} \label{Hh}
\widehat{H}_{\hbar} =  \sum_{j = 1}^d \left(- \frac{\hbar^2}{2}   \frac{\partial^2 }{\partial
x_j^2} + \frac{x_j^2}{2} \right).
\end{equation}
It has a discerete  spectrum of eigenvalues 
\begin{equation} \label{ENh} E_N(\hbar)=\hbar\lr{N+d/2},\qquad  \;\;  (N = 0, 1, 2, \dots)\end{equation} 
with multiplicities given by the composition function $p(N,d)$ of $N$ and $d$ (i.e. the number of ways to write $N$ as an ordered sum of $d$ non-negative integers). That is,
the eigenspaces 
\begin{equation} \label{VhE} V_{\hbar, E_N(\hbar)}: = \{\psi \in L^2(\R^d): \widehat{H}_{\hbar} \psi = 
E_N(\hbar) \psi \}, \end{equation}
have dimensions given by \begin{equation} \label{dimV} \dim V_{\hbar_N,E}=  p(N,d)  =  \frac{1}{(d-1)!} N^{d-1}(1 + O(N^{-1})).\end{equation}
When $E_N(\hbar) = E$ we also write
\begin{equation} \label{hNEDEF}\hbar = \hbar_N(E):=\frac{E}{N+\frac{d}{2}},\end{equation}

 An orthonormal basis of its eigenfunctions is given by the product Hermite functions,
\begin{equation}
\phi_{\alpha,h}(x)=h^{-d/4}p_{\alpha}\lr{x\cdot
h^{-1/2}}e^{-x^2/2h},\label{E:Scaling Relation}
\end{equation}
where $\alpha=\lr{\alpha_1,\ldots, \alpha_d}\geq (0,\ldots,0)$ is a
$d-$dimensional multi-index and
$p_{\alpha}(x)$ is the product $\prod_{j=1}^d p_{\alpha_j}(x_j)$ of the
hermite polynomials $p_k$ (of degree $k$)
in one variable. 

The eigenspace projections are
the orthogonal projections
\begin{equation} \label{PiDEF} 
\Pi_{\hbar, E_N(\hbar)}: L^2(\R^d) \to V_{\hbar,E_N(\hbar)}.
\end{equation}
When $E_N(\hbar) = E$ \eqref{hNEDEF}, their Schwartz kernels are given in terms of an orthonormal basis by,
\begin{equation} \label{COV}
\Pi_{h_N, E} (x, y) =
\sum_{\abs{\alpha}=N} \phi_{\alpha,h_N}(x)
\phi_{\alpha,h_N}(y).   \end{equation}

The high multiplicities are due to the $U(d)$-invariance of the isotropic Harmonic
Oscillator.   
Due to extreme degeneracy of the spectrum of \eqref{Hh} when $d\geq 2$, the eigenspace projections 
have very special  semi-classical asymptotic properties, reflecting the periodicity of the classical Hamiltonian flow and of the Schr\"odinger propagator $\exp[- \frac{it}{\hbar} \widehat{H}_{\hbar}]$. 
 In particular, the eigenspace projections \eqref{PiDEF} are semi-classical
Fourier integral operators (see e.g. \cite{GU12,GUW,HZ19}. We exploit this very rare property to obtain scaling asymptotics across the caustic. This explains why 
the results to date are only available for isotropic oscillators.  For general Harmonic Oscillators with incommensurate frequencies the eigenvalues have multiplicity one and the eigenspace projections are of a very different type.  For  general \Sch operator,
one would need to take appropriate combinations of eigenspace projections with eigenvalues in an interval.

As with any 1-parameter  metaplectic unitary group \cite{F,HSj16}, one has an explicit
 Mehler formula for the Schwartz kernel $U_h(t,
x,y)$  of the propagator,
 $e^{-\ihbar t H_h}.$ The Mehler formula \cite{F} reads
\begin{equation}
 U_h(t, x,y) =e^{-\ihbar t H_h}(x,y)= \frac{1}{(2\pi i h \sin t)^{d/2}}
 \exp\left( \frac{i}{h}\left(
 \frac{\abs{x}^2 + \abs{y}^2}{2} \frac{\cos t}{\sin t} - \frac{x\cdot
 y}{\sin t} \right) \right),
 \label{E:Mehler}
\end{equation}
where $t \in \R$ and $x,y \in \R^d$. The right hand side is singular at
$t=0.$ It is well-defined as a
distribution, however, with $t$ understood as $t-i0$. Indeed, since $H_h$
has a positive spectrum the propagator
$U_h$ is holomorphic in
the lower half-plane and $U_h(t, x, y)$ is the boundary value of a
holomorphic function in $\{\Im t < 0\}$.

One may express the $N$th spectral projection as a Fourier coefficient of the propagator. It is somewhat simpler
to work with the number  operator $\ncal$, i.e. the \Sch operator 
with the same eigenfunctions as $H_h$ and eigenvalues $h |\alpha|$.  If we replace $U_h(t)$ by $e^{- \frac{i t}{h} \ncal}$ then the
spectral projections $\Pi_{h, E}$ are
simply the Fourier coefficients of $e^{- \frac{i t}{h} \ncal}$. In \cite{HZZ15, HZZ16} it is shown that \begin{align}
\label{E:Projector Integral Forbidden}
\Pi_{h_N, E}(x,y)&=\int_{-\pi}^{\pi} U_h(t-i\epsilon,x,y) e^{\ihbar
(t-i\epsilon) E} \frac{dt}{2\pi}.
\end{align}
 The integral is independent of $\epsilon$. Combining \eqref{E:Projector Integral Forbidden} with the  Mehler formula
 \eqref{E:Mehler}, one has an explicit
 integral representation of \eqref{COV}.

 \subsubsection{Wigner distributions}

For any Schwartz kernel $K_{\hbar} \in L^2(\R^d \times \R^d)$ one may define the Wigner distribution of $K_{\hbar}$ by
 \begin{equation}
 W_{K, \hbar}(x, \xi): =  \int_{\R^d} K_{\hbar} \left( x+\frac{v}{2}, x-\frac{v}{2} \right) e^{-\frac{i}{\hbar} v \xi} \frac{dv}{(2\pi h)^d}, \label{E:WignerK}
\end{equation}  The  map from $K_\hbar \to W_{K, \hbar}$ defines the unitary `Wigner transform',  $$\wcal_{\hbar}: L^2(\R^d \times \R^d) \to L^2(T^*\R^d).$$  
The inverse Wigner transform is given by  (see page 79 of \cite{F})
\begin{equation} \label{INVERSEWIG}
f \otimes g^* (x, y)  = \int W_{f, g}(\frac{x + y}{2}, \xi) e^{i \langle x - y, \xi \rangle} d\xi. \end{equation}
Here, $W_{f, g} := W_{f \otimes g^*}$ is the Wigner transform of the rank one operator $f \otimes g^*$.

The unitary group $U(d)$ acts on $L^2(\R^d \times \R^d)$ by conjugation,$U(g) \cdot K = g K g^*$.
where we identify $K(x,y) \in L^2(\R^d \times \R^d)$ with the associated Hilbert-Schmidt operator.  Metaplectic covariance implies that,
$$\wcal_{\hbar} U(g) = T_g \wcal_{\hbar}. $$

\begin{defn} \label{WIGNERPROJDEF} The Wigner distributions $W_{\hbar, E_N(\hbar)}(x, p) \in L^2(T^*\R^d)$ of
the  eigenspace  projections $\Pi_{\hbar, E_N(\hbar)}$ are defined by,
\begin{equation}
 W_{\hbar,E_N(\hbar)}(x, \xi) = \int_{\R^d} \Pi_{\hbar, E_N(\hbar)} \left( x+\frac{v}{2}, x-\frac{v}{2} \right) e^{-\frac{i}{\hbar} v \cdot \xi} \frac{dv}{(2\pi h)^d} \label{E:Wignera}.
\end{equation}  
\end{defn}

\noindent When $E_N(\hbar)  = E$, 
 the Wigner distribution $W_{\hbar, E_N(\hbar)}$ of a single eigenspace projection \eqref{E:Wignera} is the `quantization' of the energy surface of energy
$E$ and should therefore be localized at the classical energy level $H(x, \xi)  = E$, where  $H(x, \xi) = \half \sum_{j=1}^d (\xi_j^2 +  x_j^2) $. We denote the (energy) level sets by, \begin{equation} \label{SIGMAEDEF}
 \Sigma_E  =\{(x, \xi) \in T^*\R^d: H(x, \xi): = \half(||x||^2 + ||\xi||^2) = E\}. \end{equation}
The Hamiltonian flow of $H$ is $2 \pi $ periodic, and its orbits form the
complex projective space $\CP^{d-1} \simeq \Sigma_E /\sim$ where $\sim$ is the equivalence relation of belonging to the same Hamilton orbit. Due to this periodicity, the projections \eqref{PiDEF} are semi-classical Fourier integral operators (see \cite{GU12, GUW, HZZ15}). This is also true for the Wigner distributions \eqref{E:Wignera}.  Their properties are basically unique to the isotropic oscillator \eqref{Hh}. These properties are visible in Figure \ref{fig-Wigner-eigenspace-2} depicting the graph of $W_{\hbar, 1/2}$.

\subsubsection{Weyl pseudo-differential operators, metaplectic covariance}

A semi-classical Weyl pseudo-differential operator is defined by the formula, 
$$Op_h^w(a) u(x) = \int_{\R^d} \int_{\R^d} a_{\hbar}(\half(x + y), \xi) e^{\frac{i}{\hbar} \langle x - y, \xi \rangle } u(y) dy d \xi. $$ 
See \cite{F, Zw} for background.
By using the identity
$$\langle Op^w(a) f, f \rangle = \int_{T^*\R^d} a(x, \xi) W_{f, f}(x, \xi) dx d\xi, $$ of \cite[Proposition 2.5]{F} for  orthonormal basis elements $f = \phi_{\alpha, \hbar_N}$ of $V_{\hbar, E_N(\hbar)}$ and summing over $\alpha$, one obtains the (well-known) identity, 
\begin{equation} \label{TRACEP} \mathrm{Tr} \; Op_h^w(a) \Pi_{\hbar, E_N(\hbar)} = \int_{T^* \R^d} a(x, \xi) W_{\hbar, E_N(\hbar)}(x, \xi) dx d\xi.  \end{equation}
This formula is one of the key properties of Wigner distributions and Weyl quantization.

The  Wigner transform \eqref{WIGNERDEF1} taking kernels to Wigner functions  is therefore an isometry from Hilbert-Schmidt kernels $K(x,y)$ on $\R^d \times \R^d$ to their Wigner distributions
on $T^*\R^d$ \cite{F}.  From \eqref{TRACEP} and this isometry, it is straightforward to check that,
\begin{equation} \label{INTEGRALS}\left\{  \begin{array}{ll}(i) &  \int_{T^* \R^d} W_{\hbar, E_N(\hbar)}(x, \xi) dx d \xi = \mathrm{Tr} \Pi_{\hbar, E_N(\hbar)} = \dim V_{\hbar, E_N(\hbar)} = \binom{N+d-1}{d-1} \\ &  \\(ii) & 
\int_{T^* \R^d}\left| W_{\hbar, E_N(\hbar)}(x, \xi) \right|^2 dx d \xi = \mathrm{Tr} \Pi^2_{\hbar, E_N(\hbar)} = \dim V_{\hbar, E_N(\hbar)}=\binom{N+d-1}{d-1}\\ & \\ (iii)& \int_{T^* \R^d} W_{\hbar, E_N(\hbar)}(x, \xi) \overline{W_{\hbar, E_M(\hbar)}(x, \xi)}dx d \xi = \mathrm{Tr} \Pi_{\hbar, E_N(\hbar)}  \Pi_{\hbar, E_M(\hbar)}= 0, \; \mathrm{for}\; M \not=N. \end{array} \right.,\end{equation} 
In these equations, $N=\frac{E}{\hbar}-\frac{d}{2},$ and $\binom{N+d-1}{d-1}$ is the composition function of $(N,d)$ (i.e. the number of ways to write $N$ as an ordered us of $d$ non-negative integers). Thus, the sequence,
$$\{ \frac{1}{\sqrt{\dim V_{\hbar, E_N(\hbar)}}} W_{\hbar, E_N(\hbar)}\}_{N=1}^{\infty}\subset L^2(\R^{2n}) $$
is orthonormal.

In comparing \eqref{TRACEP}, \eqref{INTEGRALS}(i)-(ii) one should keep
in mind that $W_{\hbar, E_N(\hbar)}$ is rapidly oscillating in $\{H \leq E\}$
with slowly decaying tails in the interior of $\{H \leq E\}$, with a large `bump' near $\Sigma_E$  and with maximum
given by Proposition \ref{WIGBOUND}. Integrals (e.g. of $a \equiv 1$)
against $W_{\hbar, E_N(\hbar)}$ involve a lot of cancellation due to the
oscillations. The square integrals in (ii) enhance the `bump' and decrease the tails and of course are positive.

Another key property of Weyl quantization is its metaplectic covariance (see Section \ref{METASECT} for background).
Let $Sp(2d, \R) =  Sp(T^* \R^d, \sigma)$ denote the symplectic group and let $\mu(g)$ denote
the metaplectic representation of its double cover. Then,
$\mu(g) Op_h^w(a) \mu(g)  = Op_h^w(a \circ T_g), $ where $T_g: T^*\R^d \to T^*\R^d$ denotes translation by $g$.
See \cite{F} and  Section \ref{METASECT} for background.
In particular, $U \in U(d)$ acts on $L^2(T^* \R^d)$ by translation $T_U$ of functions, using the identification $T^*\R^d \simeq \C^d$ defined by the standard complex structure $J$. 
 $U(d) \subset Sp(2d, \R) $ is a subgroup of the symplectic group and the complete symbol $H(x, \xi)$ of \eqref{Hh} is $U(d)$ invariant, so by metaplectic covariance, 
 $\hat{H}_{\hbar}$ commutes with the metaplectic represenation of $U(d).$

\section{\label{BF} Bargmann-Fock space and the Toeplitz representation of the isotropic oscillator}

 Bargmann-Fock space of degree $k$ on $\C^{m+1}$ is defined by
\[ \hcal_k = \{ f(z) \text{ holomorphic function on $\C^{m+1}$}, \quad  \int_{\C^{m+1}} |f|^2 e^{-k|z|^2} dVol_{\C^{m+1}}  < \infty \}. \]
The volume form on $\C^{m+1}$ is $d \Vol_{\C^{m+1}} = \omega^{m+1}/(m+1)!$, and  $dL(z)$ denotes Lebesgue measure. We note
that $$  \int_{\C^{m+1}}  e^{-k|z|^2} dL(z) = \omega_{m+1} \int_0^{\infty} e^{- k \rho^2} \rho^{2m +1} d\rho =
\omega_{m+1} \int_0^{\infty} e^{- k x} x^{m} dx $$ and that $$ \int_0^{\infty} e^{- k x} x^{m} dx = k^{-(m+1)} \Gamma(m + 1) = m! k^{-(m+1)},  $$
where we use polar coordinates $(\theta, \rho)$ on $\C^{m+1}$ and where $\omega_{m+1} = |S^{2m +1}|$ is the surface
measure of the unit sphere in $\C^{m+1}$. We normalize the Gaussian measure to have mass $1$ and denote it by,
\begin{equation} \label{BFG} d \Gamma_{m+1, k} : = \frac{ k^{(m+1)} }{m! \omega_{m+1}}  e^{-k|z|^2} dL(z) . \end{equation}

Let us fix $k=1$.   An orthonormal basis is
given by the holomorphic monomials, $$\{\frac{z^{\alpha}}{\sqrt{\alpha!}}\} |_{\alpha \in {\mathbb N}^{m+1}},$$ where $\alpha = (\alpha_1, \dots, \alpha_{m+1}) $ is a lattice point
in the orthant $\alpha_j \in {\mathbb N}$ and $z^{\alpha} = \prod_{j=1}^{m+1} z_j^{\alpha_j}$, $\alpha! : = \prod_{j=1}^{m+1} \alpha_j !$. If we fix the
degree $|\alpha| = \sum_{j=1}^{m+1} \alpha_j$ we get the subspaces
$$\hcal_N = \rm{Span}\; \{z^{\alpha}: |\alpha| = N\}, $$  and one has the orthogonal decompositon, $$L^2_{\rm{hol}}(\C^{m+1}, d \Gamma_{m+1, k})  = \bigoplus_{N=0}^{\infty} \hcal_N. $$ Further, there is a canonical
isomorphism
$$\hcal_N \simeq H^0(\CP^m, \ocal(N))$$
between $\hcal_N$ and the space of holomorphic sections of the $N$th power of the standard line bundle $\ocal(1) \to \CP^m$ over projective space. 
The isomorphism is essentially by the lift
$$\hat{s}(z, \lambda)  = \lambda^{\otimes N}(s(z)) $$
 of a section $s \in H^0(M, \ocal(N))$  to the total  space
$\ocal(-1) \to \CP^m$ of the line bundle dual to $\ocal(1)$, as an equivariant
holomorphic function $\hat{s}$ of degree $N$. The lifted function vanishes at the zero section. If one blows down the zero section to a point, then $\ocal(-1) \simeq \C^{m+1}$ and the lifted sections are, again, homogeneous holomorphic polynomials of degree $N$. This implies that
Bargmann-Fock space is, as a vector space, isomorphic to  
$\bigoplus_{N=0}^{\infty}  H^0(\CP^m, \ocal(N)).$ The direct sum is endowed with 
 the Bargmann-Fock Hilbert space inner product and, up to a scalar,  this inner product on $\hcal_N$ is the same as the Fubini-Study inner product on 
 $ H^0(M, \ocal(N))$.

The degree $k$ {\it Bargmann-Fock Bergman kernel} is the orthogonal
projection from $L^2(\C^{m+1}, d \Gamma_{m+1, k})  \to \hcal_k$.  Its Schwartz kernel relative to Gaussian measure $ d \Gamma_{m+1, k}$ is given by
\[ \Pi_k(z,w) = \kk^{m +1} e^{k z \bar w}, \] i.e.
for  any function $f \in L^2(\C^{m+1},  d \Gamma_{m+1, k})$, its orthogonal projection to Bargmann-Fock space is given by 
\[ (\Pi_k f)(z) = \int_{\C^m} \Pi_k(z,w) f(w)  d \Gamma_{m+1, k}(dw)). \]

More generally, fix $(V, \omega)$ be a real $2m$ dimensional symplectic vector space. Let $J: V \to V$ be a $\omega$ compatible linear complex structure, that is $g(v,w): = \omega(v,Jw)$ is a positive-definite bilinear form and $\omega(v,w) = \omega(Jv, Jw)$. 
There exists a canonical identification of $V \cong \C^m$ up to $U(m)$ action, identifying $\omega$ and $J$. We denote the BF space for $(V, \omega, J)$ by $\hcal_{k,J}$.

To put Bargmann-Fock space into the general framework of holomorphic line bundles over \kahler manifolds, we let $M=\C^m$ with coordinate $z_i=x_i + \sqrt{-1} y_i$, $L \to M$ be the trivial line bundle, let  $L \cong \C^m \times \C$, and
let  $\omega = i \sum_i dz_i \wedge d\bar z_i$ be the \kahler form, whose   potential is $\varphi(z)=|z|^2: = \sum_i |z_i|^2$.
\subsection{Lifting to the Heisenberg group}

It is useful to lift holomorphic sections of line bundles to equivariant functions on the dual $L^*$ of the total space of the line bundle. Since
they are equivariant with respect to the natural $S^1$ action, one often restricts them 
to the unit circle bundle $X = X_h$ defined by a Hermitian metric $h$ on $L^*$.

 In  the case of Bargmann-Fock space, $X$ is the Heisenberg group $\Hb^m_{red} = \C^m \times S^1$, with group multiplication 
\[ (z, \theta) \circ (z', \theta') = (z+z', \theta+\theta' + \Im( z \bar z')). \] 
  The circle bundle $\pi: X \to M$ can be trivialized as $X \cong \C^m \times S^1$. The contact form on $X$ is
\[ \alpha = d\theta + (i/2) \sum_j(z_j d\bar z_j - \bar z_j dz_j). \]
The contact form $\alpha = d\theta + \frac{i}{2} \sum_j (z_j d\bar z_j - \bar z_j dz_j)$ on $\H^m_{red}$ is invariant under the left multiplication
\[ L_{(z_0, \theta_0)}: (z, \theta) \mapsto (z_0, \theta_0)  \circ (z, \theta) = (z + z_0, \theta + \theta_0 + \frac{ z_0 \bar z-  \bar z_0 z }{2i} ). \]
The volume form on $X=\C^m \times S^1$ is $d \Vol_{X} =(d\theta/2\pi) \wedge \omega^m/m!$. 

The action of the Heisenberg group is by {\it Heisenberg translations} on phase space. As seen in the next Lemma, Heisenberg
translations are Euclidean translations in the $\C^m$ component but also have a non-trivial change in the  angular component. 
The infinitesimal Heisenberg group action on $X$ can be identified with the contact vector field generated by a linear Hamiltonian function $H: \C^m \to \R$. 
\bl \cite[Section 3.2]{ZZ17}\label{flow-lin}
For any $\beta \in \C^m$, we define a linear Hamiltonian function on $\C^m$ by
\[ H(z) = z \bar \beta + \beta \bar z. \]
The Hamiltonian vector field on $\C^m$ is  
\[ \xi_H = - i \beta \pa_z + i \bar \beta \pa_{\bar z}, \]
and its contact lift is 
\[ \h \xi_H = - i \beta \pa_z + i \bar \beta \pa_{\bar z} -\half( z \bar \beta + \beta \bar z) \pa_\theta. \]
The time $t$ flow $\h g^t$ on $X$ is given by left multiplication 
\[ \h g^t(z,\theta) = (-i \beta t, 0) \circ (z,\theta) = (z-i\beta t, \theta - t\Re(\beta \bar z)). \]
\el

The lift of a holomorphic section of $L^k \to \C^m$ is the CR-holomorphic function defined by, 
\[ \h s(z, \theta) = e^{k(i \theta - \half |z|^2)} s(z). \]
Indeed, the horizontal lift of $\pa_{\bar z_j}$ is $ \pa_{\bar z_j}^h =\pa_{\bar z_j} -  \frac{i}{2} z_j \pa_\theta, $
and $\pa_{\bar z_j}^h \h s(z, \theta) = 0$.

 The corresponding lift of the  degree $k$ Bergman (or, \Szego) kernel $\h \Pi_k(\h z, \h w)$ to $X=\C^m \times S^1$ is given by 
\begin{equation} \label{BFSZEGODEF}   \h \Pi_k(\h z, \h w) =   \kk^m e^{k \h \psi(\h z, \h w)}, \end{equation} where $\h z =(z, \theta_z), \; \h w = (w, \theta_w)$ and the phase function is 
\be \label{BF-phase}  \psi(\h z, \h w)  = i   (\theta_z - \theta_w) + z \bar w - \half |z|^2 - \half |w|^2. \ee

\subsection{\label{METASECT} Metapletic Representation} The Harmonic oscillator is a quadratic operator. Such operators
form the symplectic Lie algebra. Their representations on Bargmann-Fock space is a unitary representation of the Lie algebra.
The integration this representation gives the metaplectic representation. There exist exact formulae for the Schwartz kernels
of metaplectic propagators, generalizing the Mehler formula. We need these formulae later on.  A thorough treatment can be found in \cite{F, HSj16}.

Let $\R^{2m}, \omega = 2 \sum_{j=1}^m dx_j \wedge d y_j$ be a sympletic vector space. The space $Sp(m, \R)$ consists of linear transformation $S: \R^{2m} \to \R^{2m}$, such that $S^*\omega = \omega$. In coordinates, we write 
\[ \bma x' \\ y' \ema = S \bma x \\ y \ema = \bma A & B \\ C & D \ema \bma x \\ y \ema. \]
 The semi-direct product of the symplectic group and Heisenberg group (sometimes called
the Jacobi group) thus consists of linear transformations fixing $0$ together with Heisenberg translations moving $0$ to any point.

In complex coordinates $z_i = x_i + i y_i$, we have then 
\[ \bma z' \\ \bar z' \ema = \bma P & Q \\ \bar Q & \bar P \ema \bma z \\ \bar z \ema =: \acal \bma z \\ \bar z \ema, \]
where 
\begin{equation} \label{PQDEF}  \bma P & Q \\ \bar Q & \bar P \ema = \wcal^{-1} \bma A & B \\ C & D \ema \wcal, \quad \wcal = \frac{1}{\sqrt 2} \bma I & I \\ -i I & iI \ema. \end{equation}
The choice of normalization of $\wcal$ is such that $W^{-1} = W^*$.Thus, 
\[ P = \half(A+D + i (C-B)). \]
 We say such $ \acal \in Sp_c(m, \R) \subset M(2n,\C)$. The following identities are often useful.
\bpp [ \cite{F} Prop 4.17]
Let $ \acal= \bma P & Q \\ \bar Q & \bar P \ema \in Sp_c$, then \\
(1) $ \bma P & Q \\ \bar Q & \bar P \ema^{-1} =\bma P^* & -Q^t \\ -Q^* & P^t \ema = K  \acal^* K$, where $K =  \bma I & 0 \\ 0 & -I \ema.$ \\
(2) $ PP^* - QQ^* = I$ and $P Q^t = Q P^t$. \\
(3) $P^*P - Q^t \bar Q = I$ and $P^t \bar Q = Q^* P$. 
\epp

The (double cover) of $Sp(m,\R)$ acts on the Bargmann-Fock  space $\hcal_k$ of $\C^{m}$ as an integral operator
with the following kernel: given $M= \bma P & Q \\ \bar Q & \bar P \ema \in Sp_c$, we define
\[  \kcal_{k,M}(z,  w) = \kk^{m} (\det P)^{-1/2} \exp \left\{k \half \left( z \bar{Q} P^{-1} z + 2 \bar{w} {P}^{-1} z
- \bar{w} P^{-1} Q \bar w \right)  \right\} \]
where the ambiguity of the sign the square root $(\det P)^{-1/2}$ is determined by the lift to the double cover. When $ \acal=Id$, then $\kcal_{k, \acal}(z, \bar w) = \Pi_k(z, \bar w)$.  The lifted  kernel upstairs on the reduced Heisenberg group $X$ is given by,
\be \h \kcal_{k, \acal}(\h z, \h w) = \kcal_{k,M}(z, \bar w) e^{k(i\theta_z -|z|^2/2) + k(-i\theta_w - |w|^2/2)}. \label{hatK}\ee

\subsection{\label{TOEPMETA}Toeplitz construction of the metaplectic representation}

The analogue of Weyl pseudo-differential operators on $L^2(\R^m)$ is (Berezin-)Toeplitz operators on Bargmann-Fock space. 
Given the semi-classical parameter $k$, the Berezin-Toeplitz quantization of a multiplication operator by a semi-classical symbol
$\sigma_k(Z, \bar{Z})$ on $\C^m$ is defined by 
\begin{equation} \label{TOEP3} \Pi_k \sigma_k(Z, \bar{Z}) \Pi_k. \end{equation}
It operators on Bargmann-Fock space by multiplying a holomorphic function by $\sigma_k$ and then projecting back onto
Bargmann-Fock space. More generally, one could let $\sigma_k$ be a semi-classical pseudo-differential operator. 

The isotropic Harmonic oscillator is on represented  on $\hcal_k(\C^d)$ as
$$\hat{H}_k = \Pi_k |Z|^2 \Pi_k. $$
It is equally well representated by $\sum_{j=1}^m a_j^* a_j + \frac{d}{2} = \sum_{j=1}^m z_j \frac{\partial}{\partial z_j} + \frac{d}{2}$, where $a_j = \frac{\partial}{\partial z_j}$ and
$a_j^* = z_j$ are the annihilation/creation operators. The operator $\sum_{j=1}^m a_j^* a_j $ is called the degree or number operator
since its action on a holomorphic polynomial is to give its degree. In a similar way, the infinitesimal metaplectic representation of quadratic polynomials $Q = Q(z, \bar{z})$  is by Toeplitz operators $\Pi_k Q \Pi_k$.   

 The Toeplitz construction of the metaplectic representation is due to Daubechies \cite{D80}.  The integrated  metaplectic representation  $W_J(S)$ 
of  $S \in Mp(n,\R)$ on $\hcal_J$  is defined as follows:    Let $S \in Sp(n, \R)$ and let 
$U_S$ be the unitary translation operator  on $L^2(\R^{2n}, d L)$ defined  by $U_S F(x, \xi): = F(S^{-1}(x, \xi))$. The metaplectic representation of $S$ on $\hcal_J$
is given by (\cite{D80},(5.5) and (6.3 b)) 
\begin{equation} \label{eta}W_J(S) = \eta_{J,S} \Pi_J U_S \Pi_J, \\
\end{equation} where  (see 
  \cite{D80} (6.1) and (6.3a)), 
\begin{equation} \label{ETAJS} \begin{array}{lll}
\eta_{J,S} & = & 2^{-n}  \det (I - i J) + S (I + i J)^{\half}

 \end{array} \end{equation} and $\Pi_J$ is the Bargmann-Fock
Szeg\"o projector.  

In the notation of the previous section, 
a quadratic Hamiltonian function $H: \C^m \to \R$  generates a one-parameter family of symplectic linear transformations $ \acal_t = g^t: \C^m \to \C^m$, which in general is only $\R$-linear and not $\C$-linear, i.e. $M_t$ does not preserve the complex structure of $\C^m$. Hence, one need to orthogonal project back to holomorphic sections. To compensate for the loss of norm due to the projection, one need to multiply a factor $\eta_{ \acal_t}$. 

\bpp \label{toep-met}
Let $ \acal: \C^m \to \C^m$ be a linear symplectic map, $\acal =  \bma P & Q \\ \bar Q & \bar P \ema$, and let $\h  \acal: X \to X$ be the contact lift that fixes the fiber over $0$, then 
\[  \h \kcal_{k, \acal}(\h z, \h w) = (\det P^*)^{1/2} \int_X \h \Pi_k(\h z, \h  \acal \h u) \h \Pi_k(\h u, \h w) d \Vol_X(\h u) \]
\epp
\bpf
The contact lift $\h \acal: \C^m \times S^1 \to \C^m \times S^1$ is given by $\acal$ acting on the first factor:
\[ \h \acal: (z, \theta) \mapsto (P z + Q \bar z, \theta), \]
one can check that $\h \acal^* \alpha = \alpha$. The integral over $X$ is a standard complex Gaussian integral, analogous to \cite[Prop 4.31]{F}, and with determinant Hessian $1/|\det P|$, hence we have $(\det P^*)^{1/2}/|\det P| = (\det P)^{-1/2}$. 
\epf

\subsection{Toeplitz Quantization of Hamiltonian flows} \label{TQD}

The Toeplitz construction of the metaplectic representation generalizes to the construction 
of a Toeplitz quantization of any symplectic map on any \kahler manifold as a Toeplitz operator on the quantizing line bundles \cite{Z97}.
In this section we briefly review the construction of a Toeplitz parametrix
for the propagtor $U_k(t)$ of the quantum Hamiltonian  \eqref{TOEP}. We refer to Section \ref{BACKGROUND} and to
 \cite{ZZ17,ZZ18} for the details.

Let $(M, \omega, L, h)$ be a polarized \Kahler manifold, and $\pi: X \to M$ the unit circle bundle in the dual bundle $(L^*, h^*)$.  $X$ is a contact manifold,  equipped with the Chern connection contact one-form $\alpha$, whose associated Reeb flow $R$ is the rotation $\pa_\theta$ in the fiber direction of $X$. Any Hamiltonian vector field $\xi_H$ on $M$ generated by a a smooth function $H: M \to R$ can be lifted to a contact Hamiltonian vector field $\h \xi_H$ on $X$, which generates a contact flow $\hat{g}^t$. The following Proposition from \cite{Z97} expresses the lift of \eqref{Ukt} to  $\hcal(X) = \bigoplus_{k\geq 0} \hcal_k(X)$. 

\begin{prop} \label{SC}
There exists a semi-classical symbol  $\sigma_{k}(t)$ so that the unitary group \eqref{Ukt}  has the form
\be  \label{TREP}  \hat{U}_k(t)   = \hat{\Pi}_{k}  (\hat{g}^{-t})^* \sigma_{k}(t) \hat{\Pi}_{k}  \ee
modulo smooth kernels of order $k^{-\infty}$.
\end{prop}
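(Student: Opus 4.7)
The strategy is a parametrix construction: I would make the ansatz
$$\hat{V}_k(t) := \hat{\Pi}_k (\hat{g}^{-t})^* \sigma_k(t) \hat{\Pi}_k, \qquad \sigma_k(t) \sim \sum_{j \geq 0} k^{-j}\, \sigma_j(t,\cdot) \in C^\infty(X),$$
and determine the symbols $\sigma_j(t,\cdot)$ by requiring $\hat{V}_k(t)$ to solve the same evolution equation as $\hat{U}_k(t)$ modulo $O(k^{-\infty})$, with matching initial data. Since $\hat{\Pi}_k$ is the identity on $\hcal_k(X)$, the initial condition $\hat{V}_k(0)=\hat{\Pi}_k$ simply forces $\sigma_0(0,\cdot)\equiv 1$ and $\sigma_j(0,\cdot)\equiv 0$ for $j\geq 1$.

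The next step is differentiation in $t$. Using $\tfrac{d}{dt}(\hat{g}^{-t})^* = -(\hat{g}^{-t})^*\,\mathcal{L}_{\hat{\xi}_H}$, one obtains
$$\frac{1}{ik}\frac{d}{dt}\hat{V}_k(t) \;=\; \hat{\Pi}_k (\hat{g}^{-t})^* \Bigl(\frac{1}{ik}\dot\sigma_k(t) - \frac{1}{ik}\mathcal{L}_{\hat{\xi}_H}\sigma_k(t)\Bigr)\hat{\Pi}_k,$$
and one must match this with $\hat{H}_k \hat{V}_k(t) = \hat{\Pi}_k H \hat{\Pi}_k (\hat{g}^{-t})^* \sigma_k(t) \hat{\Pi}_k$. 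To carry out this matching I would invoke the Boutet de Monvel--Sj\"ostrand parametrix for $\hat{\Pi}_k$ and compute the composition via complex stationary phase along the graph of $\hat{g}^t$ in the symplectic cone $\{r\alpha : r>0\}\subset T^*X$. The leading contribution to the symbol of $\hat{\Pi}_k H \hat{\Pi}_k(\hat{g}^{-t})^*\sigma_k\hat{\Pi}_k$ is the pullback of $H$ by $\pi\circ\hat{g}^t$; because $\hat g^t$ is the \emph{contact} lift of the Hamiltonian flow of $H$, this pullback agrees with $H$ along the fiber direction and the principal transport equation reduces to $\dot\sigma_0 = 0$, solved by $\sigma_0 \equiv 1$. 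The subsequent orders $\sigma_j(t,\cdot)$, $j\geq 1$, are governed by linear first-order transport equations along the contact flow $\hat{g}^t$ whose forcing terms are polynomial expressions in $\sigma_0,\dots,\sigma_{j-1}$ and their derivatives; these are ODEs along integral curves of $\hat{\xi}_H$ with zero initial data, hence uniquely solvable for all $t$.

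The main obstacle, and the technical heart of the construction, is the symbolic composition calculus for Szeg\"o projectors with contact pullbacks. One must verify that $\hat{\Pi}_k(\hat{g}^{-t})^*\hat{\Pi}_k$ fits into the Boutet de Monvel--Guillemin class of Hermite/Toeplitz Fourier integral operators associated to the graph of $\hat g^t$, that the two-sided composition with $\hat{\Pi}_k$ is a clean intersection, and that the wave-front sets line up so that the stationary phase expansion yields a genuine semi-classical symbol. This is exactly where the choice of \emph{contact} (as opposed to merely Hamiltonian) lift is indispensable: it is the unique lift to $X$ whose canonical relation is a conic Lagrangian in the relevant symplectic cone, allowing the composition with $\hat{\Pi}_k$ to remain within the Toeplitz calculus.

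Once the ansatz $\hat V_k(t)$ solves the Schr\"odinger equation for $\hat U_k(t)$ modulo an error $R_k(t)$ which is smoothing of order $k^{-\infty}$ uniformly on compact time intervals, the difference $\hat U_k(t)-\hat V_k(t)$ satisfies an inhomogeneous evolution equation with zero initial data and right-hand side $R_k(t)\hat V_k(t)=O(k^{-\infty})$. Duhamel's principle together with the unitarity of $\hat U_k(t)$ on $\hcal_k$ then gives $\hat U_k(t)=\hat V_k(t)+O(k^{-\infty})$, which is the claim.
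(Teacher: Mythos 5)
The paper does not actually prove Proposition \ref{SC} here --- it quotes it from \cite{Z97} and refers to \cite{ZZ17,ZZ18} for details --- and your overall strategy (parametrix ansatz, order-by-order transport equations in the Boutet de Monvel--Sj\"ostrand/Toeplitz calculus, Duhamel at the end) is indeed the strategy of those references. However, two steps in your execution are wrong as written, and the first one makes the matching you propose impossible.

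First, your displayed formula for $\tfrac{1}{ik}\tfrac{d}{dt}\hat V_k(t)$ keeps only the terms in which $\mathcal{L}_{\hat\xi_H}$ falls on the symbol $\sigma_k(t)$, and drops the term $-\tfrac{1}{ik}\hat\Pi_k(\hat g^{-t})^*\,\sigma_k(t)\,\mathcal{L}_{\hat\xi_H}\hat\Pi_k$ in which the Lie derivative passes through to act on the argument. That dropped term is the whole point of the construction: since $\hat\xi_H=\xi_H^h-HR$ with $R=\partial_\theta$ acting as $ik$ on degree-$k$ CR functions, one has $-\tfrac{1}{ik}\mathcal{L}_{\hat\xi_H}\hat\Pi_k=(H+\tfrac{i}{k}\xi_H^h)\hat\Pi_k$, which is exactly the lift of the Kostant operator \eqref{TOEP}; it is what balances $\hat H_k\hat V_k(t)$ at order $k^0$. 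With that term omitted, your left-hand side is $O(k^{-1})$ while your right-hand side $\hat\Pi_k H\hat\Pi_k(\hat g^{-t})^*\sigma_k\hat\Pi_k$ is $O(1)$, so the ``matching'' forces $\sigma_0\equiv 0$. (Relatedly, your expression for $\hat H_k\hat V_k$ silently replaces the Kostant quantization \eqref{TOEP} by the bare Toeplitz operator $\Pi_kH\Pi_k$; for the operator actually quantized in this paper the covariant-derivative term must be carried along.)

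Second, the claim that the principal transport equation is $\dot\sigma_0=0$ with $\sigma_0\equiv 1$ is false for a general (non-holomorphic) Hamiltonian flow. Because $\hat g^t$ preserves the contact form but not the CR structure, the composition $\hat\Pi_k(\hat g^{-t})^*\hat\Pi_k$ loses norm, and the leading symbol must compensate by a determinant-type factor built from the restriction of $D\hat g^t$ to the $(1,0)$ part of the horizontal distribution. This is already visible in the linear model treated in this paper: the metaplectic propagator requires the prefactor $(\det P)^{-1/2}$ in Proposition \ref{toep-met}, equivalently the normalization $\eta_{J,S}$ in \eqref{eta}, and $\sigma_0(t)$ in \eqref{TREP} is the nonlinear analogue of that factor. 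So the principal transport equation is a genuinely nontrivial first-order ODE along the flow, with $\sigma_0(0)=1$ but $\sigma_0(t)\ne 1$ in general; only the higher-order corrections are determined the way you describe. With these two corrections your outline becomes the construction of \cite{Z97,ZZ17}.
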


\subsection{Bargmann intertwining operator between \Sch and Bargmann-Fock}

 The standard unitary intertwining operator between the Schrodinger representation and the Bargmann-Fock representation is the (Segal-)Bargmann transform,
\begin{equation} \label{BT} B f(Z) = \int_{\R^n} \exp \left(- ( Z \cdot Z - 2 \sqrt{2} Z \cdot X + X \cdot X)/2\right) f(X) dX. \end{equation} Its inverse is its adjoint,
$$B^* F(x) = \int_{\C^n} \exp \left(- ( \bar{Z} \cdot \bar{Z} - 2 \sqrt{2} \bar{Z} \cdot X + X \cdot X)/2\right) F(Z) e^{- |Z|^2} L(dZ). $$
Another inversion formula is
$$f(x) = \pi^{-n/4} (2 \pi)^{-n/2} e^{- |x|^2} \int_{\R^n} (B f)(x + i y) e^{- |y|^2/2} dy. $$

The Bargmann transform is obtained from the Euclidean heat kernel by analytic continuation in the first variable. It might
be surprising that this transform is useful in studying the Harmonic oscillator. One could just as well analytically continue
the propagator \eqref{E:Mehler}, which also defines a unitary intertwining operator. However, that operator would simply analytically
continue Hermite functions, which does not simply the analysis. The Bargmann transform maps Hermite functions to holomorphic
polynomials, and the Hermite operator to the degree operator (up to a constant) and this is a significant simplification.

One may also use the Bargmann transform to convert Wigner distributions associated to spectral projections of the
Harmonic oscillator to the much simpler orthogonal projections onto spaces of holomorphic polynomials of fixed degree. 
The density of states (diagonal of a Bergman kernel) is known as a Husimi distribution in physics. An interesting historical
fact is that Cahill-Glauber studied the relation between Wigner distributions $W_{\Pi_{\hbar, E_M}}(x, \xi) $ and the Bargmann-conjugate Bergman Husimi
distributions $$ B \Pi_{\hbar, E_N} B^* (Z, \bar{Z}) $$
 in \cite{CG69I, CG69II}.
The Bargmann transform is the same as the spectral projections
of the Bargmann-Fock quantization $\Pi_{BF, k} |Z|^2 \Pi_{BF, k}$ of
$|Z|^2$. They showed that
$$\begin{array}{l} B_x \otimes B_y \int W_{\Pi_{\hbar, E_N}}(\frac{x + y}{2}, \xi) e^{i \langle x - y, \xi \rangle} d\xi =
\int_{\R^n} \int_{\R^n} \int_{\R^n} B(x, Z) B(y, Z)  W_{\Pi_{\hbar, E_N}}(\frac{x + y}{2}, \xi) e^{i \langle x - y, \xi \rangle} d\xi dx dy \end{array} $$
is  convolution of $W_{\Pi_{\hbar, E_M}}(x, \xi) $ with a complex Gaussian.

\subsection{\label{ANALOGS} Analogies and correspondences between the real and complex settings}
We now list some  important analogies to help navigate the results of this article, and to compare the results
in the real and complex settings. The undefined notation and terminology will be provided in the relevant section of this article. 
The reader is encouraged to consult this list as the article proceeds; it is probably not possible to understand much of it from the start.

Microlocal analysis provides a generalization of this equivalence to general manifolds. The generalization of the Bargmann
transform   (see Section \ref{BT})  is called an FBI transform. It is well-recognized that the setting of holomorphic sections of high powers $L^k \to M$
of ample line bundles over \kahler manifolds is quite analogous to  the setting of \Sch operators on Riemannian manifolds, to the
extent that one may expect parallel results in both domains. The role of the Planck constant $\hbar$ in semi-classical analysis 
is analogous to $k^{-1}$ in the line bundle setting. In fact, the relation between Wigner distributions and ``Husimi distributions'' (or partial
Bergman density of states) was first given by Cahill-Glauber in 1969 \cite{CG69I, CG69II} for applications in quantum
optics.  We refer to \cite{R87, Zw} for background in semi-classical analysis
and to  \cite{BG81} for background on Toeplitz operators. 

Here is a list of analogies which are relevant to the present survey.

\bigskip

\begin{itemize}

\item The cotangent bundle $(T^* \R^d, \sigma)$ equipped with its canonical symplectic structure 
is analogous to a \kahler manifold $(M, \omega)$. One may equip $T^*\R^d$ with a complex structure $J$ so that it becomes
the \kahler manifold $\C^d$. \bigskip

\item The total space of the  dual line bundle  $L^*$ of a holomorphic line bundle $L \to M$ is analogous to $\C^d$. Indeed, if $M = \CP^{d-1}$ (complex projective space),
then $\C^d = L^*$ where $L^*= \ocal(-1)$ is the tautological line bundle over $\CP^{d-1}$. (More precisely, $\C^d = \ocal(-1)$ with the zero
section `blown down'.) \bigskip

\item When $L$ is an `ample' line bundle, sections  $s_k \in H^0(M, L^k)$ in the space of holomorphic sections of the $k$th power of $L$ lift in a canonical
way to equivariant holomorphic functions $\hat{s}_k$  on $L^*$. In the case $(M, L) = (\CP^{d-1}, \ocal(-1))$, lifts of sections of $L^k$
are the holomorphic homogeneous polynomials on $\C^d$ of degree $k$. \bigskip

\item The total space $L$ carries an $S^1$ (circle) action, namely rotation in the fibers $L_z$ of $\pi: L \to M$. The generator
$D_{\theta}$ of this circle action  is analogous to the isotropic harmonic oscillator and to the degree operator. Namely if
$D_{\theta} \hat{s}_k = k \hat{s}_k$. The isotropic harmonic oscillator $\hat{H}_{\hbar}$ on $L^2(\R^d)$ is unitarily equivalent to the degree operator
on $\C^d$ under the Bargmann transform. \bigskip

 \item In the case $(M, L) = (\CP^{d-1}, \ocal(-1))$, $H^0(\CP^{d-1}, \ocal(k))$ is canonically isomorphic to the eigenspace 
of eigenvalue $k + \frac{d}{2}$ of the isotropic harmonic oscillator. \bigskip

\item Eigenspace spectral projection kernels $\Pi_{\hbar, E_N(\hbar)}(x,y)$ for eigenspaces $V_N$  of isotropic harmonic oscillators are analogous to Bergman kernels
$\Pi_{h^k}(z,w)$ for spaces $H^0(M, L^k)$ of holomorphic sections of powers of a positive Hermitian line bundle $(L, h)$ over
a \kahler manifold $(M, \omega)$. \bigskip

\item The Wigner distribution $W_{\hbar, E_N(\hbar)}(x, \xi)$ of an eigenspace projection is  analogous to the density of 
states $\Pi_{h^k}(z, z)$ where $\Pi_{h^k}$ is the  Bergman kernel for $H^0(M, L^k)$. The density of states is the contraction
of the diagonal of the Bergman kernel. \bigskip

\item Airy scaling asymptotics of  scaled Wigner distributions of eigenspace projections of the isotropic harmonic oscillator around an energy surface
$\Sigma_E \subset T^*\R^d$ are analogous to Gaussian error
function asymptotics of scaled Bergman kernels around an energy surface. Both live on `phase space'. The eigenspace projections of
the oscillator live on configuration (or, physical) space and have no simple analogue in the \kahler setting. \bigskip

\item The unitary Bargman transform $\bcal: L^2(\R^d) \to H^2(\C^d, e^{- |Z|^2} d L(Z))$ intertwines the real \Sch and holomorphic
Bargmann-Fock representations of quantum mechanics on $\R^d$. There is no simple analogue for general \kahler manifolds. It would be a 
unitary intertwining operator between the  Bargmann-Fock spaces of  $L^*$ and $L^2(N)$ where $N \subset M$ would be a totally
real Lagrangian submanifold.  See Section \ref{BT} for background.

\end{itemize}\bigskip

There is an important difference between the results on Wigner distributions and the results on partial Bergman kernels, which indicates
that there is much more to be done on interfaces in spectral asymptotics. Namely, in the \kahler setting we have two Hamiltonians: (i) A Toeplitz Hamiltonian
$\hat{H}_k: = \Pi_{h^k} H \Pi_{h^k}$ (where $H: M \to \R$ is a smooth function), and (ii) the operator $D_{\theta}$ on $L^*$ defining the degree $k$ of a lifted section. The latter is analogous to the isotropic oscillator. The interfaces for $D_{\theta}$ are interfaces across `disc bundles'
$D^*_R \subset L^*$ defined by a Hermitian metric $h$ on $L$. The analogue of Airy scaling asymptotics of Wigner distributions
is Gaussian error function asymptotics for lifts of Bergman kernels to $L^*$.  A Toeplitz Hamiltonian $\hat{H}_k$ lifts to a Hamiltonian
on $L^*$ which commutes with $D_{\theta}$, and our results on partial Bergman kernels pertain to the pair. So far, we have not considered
the analogous problem on $L^2(\R^d)$ defined by a second \Sch operator which commutes with the isotropic harmonic oscillator. As this
brief discussion indicates, there are many types of interface phenomena that remain to be explored.

\section{\label{SCH} Interface problems for \Sch equations}

In this section we consider the simplest \Sch operator, namely the isotropic Harmonic Oscillator on $\R^d$. We review three types 
of interface scaling results: 

\begin{itemize}

\item Scaling of the spectral projections kernel for a single eigenspace around the caustic. At the same time, we consider
scaling of nodal sets of random eigenfunctions around the caustic. \bigskip

\item Scaling asymptotics  of the Wigner distributions of the spectral projections kernel around an energy level in phase space. \bigskip

\item Scaling asymptotics of the Wigner distributions of Weyl sums of spectral projections kernels over an interval of energies 
at the boundary of the interval.

\end{itemize}

\subsection{Allowed and forbidden regions and the caustic}

Consider a general \Sch operator $\hat{H}_{\hbar} : = -\hbar^2 \Delta + V$ on $L^2(\R^d)$ with $V(x) \to \infty$ as $|x| \to \infty$. Then 
$\hat{H}_{\hbar}$ has a discrete spectrum of eigenfunctions $E_j(\hbar)$, 
\begin{equation} \label{SchEieg} \hat{H}_{\hbar} \psi_{\hbar, j} = E_j(\hbar) \psi_{\hbar, j}. \end{equation}
In the semi-classical limit 
\begin{equation} \label{SCLIMIT} \hbar \to 0, j \to \infty, E_j(\hbar) = E, \end{equation} the eigenfunctions of $\hat{H}_{\hbar}$ 
 are  rapidly oscillating in the classically allowed region 
\[\acal_E:=\set{V(x) \leq E},\]
and exponentially decaying in the classically forbidden region 
\[\fcal_E:=\acal_E^c=\set{V(x)>E}.\]
This reflects the fact that a classical particle of energy $E$  is  confined to $\acal_E=\set{V(x)\leq E}.$  We define the {\it caustic} to be
\begin{equation} \label{ccal} \ccal_E:=\dell \acal_E=\set{V(x)=E}.\end{equation} The exponential decay rate of eigenfunctions in the forbidden region as $\hbar \to 0$ is  measured by the Agmon distance to the caustic. We refer to \cite{Ag,HS} for background. 

In the first series of results we are interested in the transition between the oscillatory and exponential decay behavior of eigenfunctions
in a zone around the caustic \eqref{ccal}.  We review two types of results: (i) Airy scaling asymptotics of spectral projections kernels,
and (ii) interface asymptotics of nodal (i.e. zero) sets of `random eigenfunctions' in a spectral eigenspace. At this time, results
are only proved in the special case of the isotropic harmonic oscillator, but one may expect that suitably generalized results hold
rather universally.  

In the case of the isotropic Harmonic Oscillator, the  allowed region $\acal_E$, resp. the  forbidden region $\fcal_E$ are
given  respectively by, \begin{equation}
\label{AF2}
\acal_E = \{x:\abs{x}^2<2E\}, \quad \fcal_E = \{x:\abs{x}^2>2E\}.
\end{equation}
Thus, $\acal_E$ is the projection to $\R^d$ of the energy surface $\{H = E\}
\subset T^* \R^d$,  $\fcal_E$ is
its complement, and the  caustic set is given by,$$\ccal_E = \{|x| = 2 E\}. $$
 
The semi-classical limit at the energy level $E>0$ is the limit
as $\hbar \to 0, N \to \infty$ with fixed $E$, so that $\hbar$ only takes the values \eqref{hNEDEF}.

\subsection{Scaling asymptotics around the caustic in physical space}
Due to the homogeneity of the isotropic oscillator, it suffices to consider one value of $E$.  We fix $E = \half$ and consider $E_N(\hbar) =  \half$.  For this choice 
of $E$,  \eqref{PiDEF} is $\Pi_{\hbar, \half}. $

When $d=1,$ the eigenspaces $V_{\hbar_N, E}$ have dimension $1$ and it is a classical fact (based on WKB or ODE techniques) that Hermite functions and more general \Sch eigenfunctions exhibit Airy asympotics at the caustic (turning points).  See for instance \cite{O,T,FW}.  
It is not true for $d > 1$ that individual eigenfunctions exhibit analogous Airy scaling asymptotics around the caustic. Indeed, due to 
the high multiplicity of eigenvalues, there is a good theory of Gaussian random eigenfunctions of the isotropic oscillator, and random eigenfunctions do not exhibit Airy scaling asymptotics. The proper generalization of the $d =1$ result is to consider the scaling asymtptoics of the eigenspace projection
kernels  \eqref{PiDEF} with $x, y$ in an $\hbar^{2/3}$-tube around 
$\ccal_E$.

The first result states that  {\it individual}  eigenspace projection
kernels \eqref{PiDEF} exhibit Airy scaling asymtotics around a point $x_0 \in \ccal_E$ of the caustic.
Let $x_0$ be a point on the caustic $|x_0|^2=1$ for $E=1/2$. Points in an $\hbar^{2/3}$ neighborhood
of $x_0$ may be expressed as $x_0 + \hbar^{2/3} u$ with
$u \in \R^d$.  The caustic is a $(d-1)$-sphere whose
normal direction at $x_0$  is $x_0$, so the normal component of $u$ is
 $u_1 x_0$ when $|x_0| = 1$, where $u_1:=\inprod{x_0}{u}$. We also put  $u':=u-u_1x_0$ for the tangential component, and identify $T_{x_0} \ccal_E \cong T^*_{x_0} \ccal_E \cong \R^{d-1}$.   By rotational symmetry, we 
 may assume  $x_0 = (1, 0, \cdots, 0)$, so that $u=(u_1, u_2, \cdots, u_d) =: (u_1; u')$.

\begin{theo}\label{SCLintro}
Let $x_0$ be a point on the caustic $|x_0|^2=1$ for $E=1/2$. Then for  $u,v \in \R^d$,
\begin{equation}
\Pi_{\hbar,1/2} (x_0 + \hbar^{2/3} u, x_0 + \hbar^{2/3} v) = \hbar^{-2d/3+1/3} \Pi_0(u, v) (1 + O(\hbar^{1/3})), \label{E:CausticScaling}
\end{equation}
where  
\be 
\label{Pi0}
\Pi_0(u_1, u'; v_1, v') := 2^{2/3} (2 \pi)^{-d+1} \int_{\R^{d-1}} e^{i \lan u'-v', p\ran } \Ai(2^{1/3}(u_1 + p^2/2))\Ai(2^{1/3}(v_1 + p^2/2)) dp,
\ee
and  $u_1:=\inprod{x_0}{u}$, $u':=u-u_1x_0$ (similarly for $v_1.$)
On the diagonal, let $\abs{x}^2 = \abs{x_0+\hbar^{2/3} u}^2 = 1 + \hbar^{2/3} s+O(\hbar^{4/3})$ with $s = 2 \lan x_0, u\ran \in \R$. Then,
\be\label{pi-tube-1} \Pi_{\hbar}(x,x) = 2^{-d+1}\pi^{-d/2} \hbar^{(1-2d)/3} \Ai_{-d/2}(s)(1+O(\hbar^{1/3})). \ee
The error terms in \eqref{E:CausticScaling} and \eqref{pi-tube-1} are uniform when $u,v,s$ vary over a compact set. 
\end{theo}

\noindent 
Above, $\Ai$ is the Airy function, and $\Ai_{-d/2}$ is a {\it weighted Airy function},  defined  for $k \in \R$ by
\begin{equation}
 \label{eq:Ai_k} \Ai_{k}(s) := \int_\ccal T^{k} \exp \left(  \frac{T^3}{3} - T s\right) \frac{dT}{2\pi i},\qquad u\in \R
\end{equation}
where $\ccal$ is the usual contour for Airy function, running from $e^{-i \pi/3} \infty$ to $e^{i \pi/3} \infty$ on the right half of the complex plane (see Section \ref{AIRYAPP}  for a brief review of the Airy function).  

\begin{remark}\label{R:TW} When $d = 3$, the kernel \eqref{Pi0} with $u' = v'$, i.e.  $\Pi_0(u_1, u'; v_1, u')$, coincides modulo the factor of $\sqrt{\lambda}$ with  the Airy kernel $K(x,y)$ of the Tracy-Widom distribution.  The ``allowed region'' of this article is analogous to the `bulk' in random matrix theory, and the ``caustic'' of this article is
analogous to the ``edge of the spectrum''.  \end{remark}

\subsection{Nodal sets of random Hermite eigenfunctions}
 Theorem \ref{SCLintro} can be used to determine the interface behavior of nodal (zero) sets of random eigenfunctions of the isotropic
 oscillator of a fixed eigenvalue. In many ways, the isotropic oscillator is the analogue among \Sch operators on $L^2(\R^d)$ of
 the Laplacian on a standard sphere ${\mathbb S}^d$, and the  study of random Hermite eigenfunctions is somewhat analogous
 to the study of random spherical harmonics. However, there are no forbidden regions in the case of ${\mathbb S}^d$, and the
 interface behavior of random Hermite eigenfunctions has no parallel for random spherical harmonics.

\begin{defn}
  A Gaussian random eigenfunction for $H_h$ with eigenvalue $E$ is the
  random series
$$ \Phi_N(x):=\sum_{\abs{\alpha}=N}  a_{\alpha}\phi_{\alpha,h_N}(x),  $$
for $a_{\alpha}\sim N(0,1)_{\R}$ i.i.d. Equivalently, it is the Gaussian
measure $\gamma_N$ on $V_N$
which is given by $e^{- \sum_{\alpha} |a_{\alpha}|^2/2} \prod d
a_{\alpha}$.
\end{defn}
\noindent We denote by $$Z_{\Phi_N} = \{x: \Phi_N(x) = 0\} $$
 the nodal set of $\Phi_N$ and by $\ZN$ the random measure of integration
 over
$Z_{\Phi_N}$ with respect to
 the Euclidean surface measure (the Hausdorff measure) of the nodal set.
 Thus for any ball $B \subset \R^d$,
$$\ZN (B) = \hcal^{d-1} (B \cap Z_{\Phi_N}).$$
Thus $\E \ZN$ is a measure on $\R^n$ given by
$$\E \ZN (B) = \int_{V_N}  \hcal^{d-1} (B \cap Z_{\Phi_N}) d\gamma_N. $$

The first result gives semi-classical asymptotics of the hypersurface volumes of the nodal sets of random Hermite eigenfunctions
of fixed eigenvalue in the allowed, resp. forbidden region. 

\begin{theo}\label{T:Main}
Let $x\in \R^d$ such that $0<\abs{x}\neq \sqrt{2E}.$ Then the measure $\E
\ZN$ has a density $F_N(x)$ with
respect to Lebesgue measure given by
$$\left\{\begin{array}{ll}
\mbox{If}~x\in \acal_E\backslash \set{0},  &
F_N(x) \simeq h^{-1}\cdot c_d \sqrt{2E-\abs{x}^2}\lr{1+O(h)}\label{E:Allowed
Density} \\ & \\
\mbox{If}~ x\in \fcal_E, &
F_N(x) \simeq h^{-1/2}\cdot C_d
\frac{E^{1/2}}{\abs{x}^{1/2}\lr{\abs{x}^2-2E}^{1/4}}
\lr{1+O(h)}\label{E:Forbidden Density}
 \end{array}, \right.$$
where the implied constants in the `$O$' symbols are uniform on compact
subsets of the interiors of
$\acal_E\backslash\set{0}$ and $\fcal_E$, and where
\[c_d=
\frac{\Gamma\lr{\frac{d+1}{2}}}{\sqrt{d\pi}\Gamma\lr{\frac{d}{2}}}\qquad
\text{and}\qquad C_d =
\frac{\Gamma\lr{\frac{d+1}{2}}}{\sqrt{\pi}\Gamma\lr{\frac{d}{2}}}.\]
\end{theo}
The key point is  the different growth rates in
$h$ for the density of zeros in the
allowed and forbidden region. 
In dimension one, eigenfunctions have no zeros in the forbidden region, but in dimensions $d \geq 2$ they do. In the allowed region, nodal sets of eigenfunctions behave in a similar way to nodal sets on Riemannian manifolds \cite{Jin},  but in the forbidden region they are sparser. 

The next  result on nodal sets (Theorem \ref{CAUSTIC}) gives scaling asymptotics for the average nodal density that `interpolate' between \eqref{E:Allowed Density} and \eqref{E:Forbidden Density}. Fix $x \in \ccal_E$, where $E=1/2$, and study the  rescaled ensemble
\[\Phi_{\hbar, E}^{x,\alpha}(u):=\Phi_{\hbar,E}(x+\hbar^\alpha u)\]
and the associated hypersurface measure 
\[\abs{Z_{\hbar, E}^{x,\alpha}}(B)=\hcal^{d-1}\lr{\set{\Phi_{\hbar, E}^{x,\alpha}(v)=0}\cap B},\qquad B\subset \R^d.\]
The next result gives the asymptotics of $\E \abs{Z_{\hbar, E}^{x,\alpha}}$ when $\alpha = 2/3$ is in terms of the weighted Airy functions $\Ai_k$ (see \eqref{eq:Ai_k}).

\begin{theo}[Nodal set in a shrinking ball around a caustic point]\label{CAUSTIC} Fix $E=1/2$ and $x\in \mathcal C_E$, i.e. $|x|=1$. For any bounded measurable $B\subseteq \R^d,$ 
\[\E \abs{Z_{\hbar, E}^{x,2/3}}(B)=\int_B \fcal(u)du,\]
where 
\be \fcal(u)= \lr{2\pi}^{-\frac{d+1}{2}}\int_{\R^d}|\Omega(u)^{1/2}\xi|e^{-\abs{\xi}^2/2}d\xi ~(1 + O(\hbar^{1/3}))\label{E:CausticDensity}\ee
and $\Omega=\lr{\Omega_{ij}}_{1\leq i,j\leq n}$ is the symmetric matrix
\begin{equation}
\Omega_{ij}(u) = x_i x_j \left( \frac{\Ai_{2-d/2}(s)}{\Ai_{-d/2}(s)} - \frac{\Ai^2_{1-d/2}(s)}{\Ai^2_{-d/2}(s)} \right) + \frac{\delta_{ij}}{2} \frac{\Ai_{-1-d/2}(s)}{\Ai_{-d/2}(s)}.\label{OMEGAintro}
\end{equation}
where $s = 2\lan u, x \ran$. 
The implied constant in the error estimate from \eqref{E:CausticDensity} is uniform when $u$ varies in compact subsets of $\R^d$. 
\end{theo}
\begin{remark}
The leading term in $\fcal$ is $\hbar$-independent and positive everywhere since the matrix $\Omega_{ij}(u)$ as a linear operator has nontrivial range.  The matrix $\lr{x_i x_j}_{i,j}$ in \eqref{OMEGAintro} is a rank $1$ projection onto the $x-$direction; since the dimension $d \geq 2$, it cannot cancel out the second term. We refer to \cite{HZZ15,HZZ16} for details.
\end{remark}
\begin{remark}\label{R:Unscaled}
Theorem \ref{CAUSTIC} says that if $x\in \ccal_E$ and $\twiddle{B}_\hbar=x+\hbar^{2/3}B$ for some bounded measurable $B,$ then 
\[\E{\abs{Z_{\Phi_{\hbar,E}}}}(\twiddle{B}_\hbar)=\hbar^{2/3\lr{d-1}}\E{\abs{Z_{\hbar, E}^{x,\alpha}}}(B)=\hbar^{-2/3}\int_{\twiddle{B}_\hbar}\fcal(\hbar^{-2/3}\lr{y-x})dy,\]
which shows that the average (unscaled) density of zeros in a $\hbar^{2/3}-$tube around $\ccal_E$ grows like $\hbar^{-2/3}$ as $\hbar\gives 0.$
\end{remark}

\begin{remark} 
The  scaling asymptotics of zeros around the caustic,
especially in the radial (normal) direction,   is analogous to the scaling asyptotics of  eigenvalues of
random Hermitian matrices around the edge of the spectrum. \end{remark}







\begin{figure}
\vspace{-11pt}
\begin{center}\label{F:Heller}
  \includegraphics[width=.8 \textwidth]{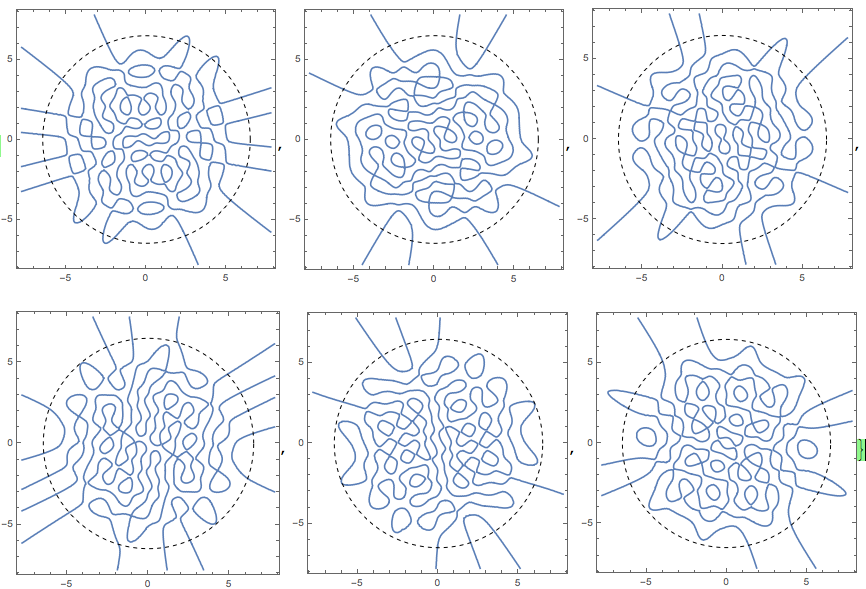}
\end{center}

The nodal set is very dense and busy in $\acal_E$ and rather sparse and
`non-oscillating' in $\fcal_E$.

\end{figure}

\subsection{Discussion of the nodal results}

Computer graphics of Bies-Heller \cite{BH} (reprinted as Figure \ref{F:Heller} in \cite{HZZ15}) and the displayed graphics of Peng Zhou  show that the nodal set in $\acal_E$ near the caustic $\partial \acal_E$ consists of a large number of highly curved nodal components apparently touching the caustic while the nodal set in $\fcal_E$ near $\partial \acal_E$ consists of fewer and less curved nodal components all of which touch the caustic. This is because, if  $\psi\in V_{\hbar, E}$ is non-zero,  $\Delta \psi = (V - E) \psi$ forces $\psi$ and $\Delta \psi$ to have the same sign in $\fcal_E$. In a nodal domain $\dcal$ we may assume $\psi > 0$, but then $\psi$ is a positive subharmonic function in $\dcal$ and cannot be zero on $\partial \dcal$ without vanishing identically. Hence, every nodal component which intersects $\fcal_E$ must also intersect $\acal_E$ and therefore $\ccal_E$. 

The scaling limit of the density of zeros in a shrinking neighborhood of the caustic, or in annular subdomains of $\acal_E$ and $\fcal_E$ at shrinking distances from the caustic is given in  Theorem \ref{CAUSTIC}.

\subsection{\label{KRSECT} The Kac-Rice Formula} The proof of Theorem \ref{CAUSTIC}  is based on the Kac-Rice formula for the average density of zeros.

\begin{lem}[Kac-Rice for Gaussian Fields]\label{L:Gaussian KR}
Let $\Phi_{\hbar, E}$ be the random Hermite eigenfunction of $\widehat{H}_\hbar$ with eigenvalue $E$. Then the density of zeros of $\Phi_{\hbar, E}$ is given by
\begin{equation}
F_{\hbar, E}(x)= \lr{2\pi}^{-\frac{d+1}{2}}\int_{\R^d}|\W^{1/2}(x)\xi| \;\; e^{-\abs{\xi}^2/2} \;\; d\xi,\label{E:Gaussian KR}
\end{equation}
where $\W(x)$ is the $d\x d$ matrix
\begin{align}
\notag \W_{ij}(x) &= (\dell_{x_i}\dell_{y_j} \log \Pi_{\hbar, E})(x,x)\\
\label{E:Gaussian Cov Mat} &=
\frac{(\Pi_{\hbar, E}\cdot \dell_{x_i}\dell_{y_j}\Pi_{\hbar, E})(x,x)-(\dell_{x_i}\Pi_{\hbar, E} \cdot \dell_{y_j}\Pi_{\hbar, E})(x,x)}{\Pi_{\hbar, E} (x,x)^2}
\end{align}
and $\Pi_{\hbar, E}(x,y)$ is the kernel of eigenspace projection \eqref{COV}.
\end{lem}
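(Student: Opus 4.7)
The strategy is to view $\Phi_{\hbar,E}$ as a smooth, centered Gaussian field on $\R^d$ and apply the general Kac--Rice formula for zero densities of Gaussian fields, reducing the problem to a computation with the covariance kernel. The main work is then purely algebraic: identifying the conditional covariance matrix of $\nabla \Phi_{\hbar,E}(x)$ given $\Phi_{\hbar,E}(x)=0$ as the matrix $\W(x)$ displayed in \eqref{E:Gaussian Cov Mat}, and simplifying its form to $\partial_{x_i}\partial_{y_j}\log \Pi_{\hbar,E}(x,x)$.

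First, since the Hermite functions are smooth and the coefficients $a_\alpha$ are i.i.d.\ standard Gaussians, $\Phi_{\hbar,E}$ is a smooth centered Gaussian field whose two-point covariance is exactly the spectral projection kernel:
\begin{equation*}
\E[\Phi_{\hbar,E}(x)\Phi_{\hbar,E}(y)] \;=\; \sum_{|\alpha|=N} \phi_{\alpha,\hbar_N}(x)\phi_{\alpha,\hbar_N}(y) \;=\; \Pi_{\hbar,E}(x,y).
\end{equation*}
Differentiating under the expectation, the covariances of the jet $(\Phi_{\hbar,E}(x), \nabla\Phi_{\hbar,E}(x))$ are obtained by differentiating $\Pi_{\hbar,E}(x,y)$ in the appropriate variable and restricting to $y=x$. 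In particular, at any $x$ with $\Pi_{\hbar,E}(x,x)>0$ the pair $(\Phi_{\hbar,E}(x),\nabla\Phi_{\hbar,E}(x))$ is jointly Gaussian and the marginal density of $\Phi_{\hbar,E}(x)$ at $0$ is $(2\pi \Pi_{\hbar,E}(x,x))^{-1/2}$.

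Second, I invoke the standard Kac--Rice formula for smooth Gaussian fields (see e.g.\ the references cited in \cite{HZZ15}), which gives the density of the expected zero measure as
\begin{equation*}
F_{\hbar,E}(x) \;=\; p_{\Phi_{\hbar,E}(x)}(0)\,\E\bigl[\,|\nabla\Phi_{\hbar,E}(x)|\,\big|\,\Phi_{\hbar,E}(x)=0\,\bigr].
\end{equation*}
Non-degeneracy of the jet covariance for generic $x$ (which can be verified from $U(d)$-symmetry and the explicit form of the eigenspace) justifies the application of Kac--Rice; the density is continuous in $x$ on the open set where non-degeneracy holds.

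Third, I compute the conditional covariance matrix. By the standard Gaussian conditioning (Schur complement) formula, the conditional law of $\nabla\Phi_{\hbar,E}(x)$ given $\Phi_{\hbar,E}(x)=0$ is centered Gaussian with covariance
\begin{equation*}
\Sigma_{ij}(x) \;=\; \partial_{x_i}\partial_{y_j}\Pi_{\hbar,E}(x,x) \;-\; \frac{\partial_{x_i}\Pi_{\hbar,E}(x,x)\,\partial_{y_j}\Pi_{\hbar,E}(x,x)}{\Pi_{\hbar,E}(x,x)}.
\end{equation*}
Writing $\nabla\Phi_{\hbar,E}(x)\mid \Phi_{\hbar,E}(x)=0 \;\stackrel{d}{=}\; \Sigma(x)^{1/2} Z$ for $Z\sim N(0,I_d)$, substituting into the Kac--Rice formula, and pulling the factor $\Pi_{\hbar,E}(x,x)^{-1/2}$ from the Gaussian density into the square root transforms $\Sigma(x)/\Pi_{\hbar,E}(x,x)$ into the matrix $\W(x)$; a short calculation shows
\begin{equation*}
\frac{\Sigma_{ij}(x)}{\Pi_{\hbar,E}(x,x)} \;=\; \partial_{x_i}\partial_{y_j}\log \Pi_{\hbar,E}(x,y)\Big|_{y=x},
\end{equation*}
matching both expressions for $\W_{ij}(x)$ in \eqref{E:Gaussian Cov Mat}. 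Combining the prefactors $(2\pi\Pi_{\hbar,E}(x,x))^{-1/2}\cdot (2\pi)^{-d/2}$ into $(2\pi)^{-(d+1)/2}$ yields \eqref{E:Gaussian KR}.

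The only real step that needs care is the non-degeneracy of the jet covariance (so that the conditional law is well-defined and the Kac--Rice formula applies pointwise); this is the main technical obstacle, though in the isotropic oscillator setting it follows from explicit formulas for $\Pi_{\hbar,E}$ and its derivatives. Everything else is a direct specialization of the general Gaussian Kac--Rice formula to the covariance kernel $\Pi_{\hbar,E}$.
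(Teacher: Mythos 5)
Your proof is correct and follows the standard Kac--Rice route; the paper itself does not reprove the lemma here but simply cites \cite{HZZ15,HZZ16} for background, and the argument in those references is essentially the one you give: identify $\Pi_{\hbar,E}(x,y)$ as the covariance kernel of the Gaussian field, apply the Kac--Rice formula $F_{\hbar,E}(x)=p_{\Phi(x)}(0)\,\E\bigl[\,|\nabla\Phi(x)|\,\big|\,\Phi(x)=0\,\bigr]$, compute the conditional covariance by Schur complement, and normalize by $\Pi_{\hbar,E}(x,x)$ to produce the log-derivative matrix $\W(x)$. Your bookkeeping of the prefactors is right: $p_{\Phi(x)}(0)=(2\pi\Pi_{\hbar,E}(x,x))^{-1/2}$ contributes $\Pi_{\hbar,E}(x,x)^{-1/2}$, which cancels the $\Pi_{\hbar,E}(x,x)^{1/2}$ released when writing $|\Sigma(x)^{1/2}\xi|=\Pi_{\hbar,E}(x,x)^{1/2}|\W(x)^{1/2}\xi|$, and the remaining $(2\pi)^{-1/2}\cdot(2\pi)^{-d/2}$ gives the stated constant. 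The one place you correctly flag but leave to citation is the pointwise non-degeneracy of the $1$-jet covariance, which justifies applying Kac--Rice at each $x$; in the isotropic oscillator setting this can be checked from the explicit Laguerre/Mehler form of $\Pi_{\hbar,E}$, and the cited works handle it the same way, so there is no genuine gap.
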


We refer to \cite{HZZ15, HZZ16} for background. The main task in proving results on zeros near the caustic is therefore to work out the asymptotics of $\Pi_{\hbar, E}(x,x)$ and its derivatives there.

\section{\label{WIGNER} Interfaces in phase space for \Sch operators: Wigner distributions}

We now turn to phase space interfaces. Instead of studying the scaling asymptotics of the spectral projections \eqref{PiDEF} \begin{equation} \label{PiDEF2} \Pi_{\hbar, E_N(\hbar)}: L^2(\R^d) \to V_{\hbar,E_N(\hbar)} \end{equation}
we study the scaling asymptotics of their semi-classical Wigner distributions \begin{equation}\label{WIGNERDEF1}
 W_{\hbar, E_N(\hbar)}(x, \xi) := \int_{\R^d} \Pi_{\hbar, E_N(\hbar)} \left( x+\frac{v}{2}, x-\frac{v}{2} \right) e^{-\frac{i}{\hbar} v \cdot \xi} \frac{dv}{(2\pi h)^d} 
\end{equation} 
across the phase space energy surface \eqref{SIGMAEDEF}.

When $E_N(\hbar) = E + o(1)$ as $\hbar \to 0$, $W_{\hbar, E_N(\hbar)}$ is thought of as the `quantization' of the energy surface,
and \eqref{WIGNERDEF1} is thought of as an approximate $\delta$-function on \eqref{SIGMAEDEF}. This is true in the weak* sense, but the pointwise behavior is quite a bit more complicated and is studied in \cite{HZ19}.

Wigner distributions were introduced in \cite{W32} as phase space densities. Heuristically, the Wigner distribution \eqref{PiDEF}  is a kind of probability density in phase space of finding a particle of energy $E_N(\hbar)$ at the
point $(x, \xi) \in T^* \R^d$. This is not literally true, since $W_{\hbar, E_N(\hbar)}(x, \xi)$ is not positive: it oscillates with  heavy tails inside the energy surface \eqref{SIGMAEDEF}, 
has a kind of transition across $\Sigma_E$ and then
decays  rapidly outside the energy surface.  The purpose of this paper is to give detailed results on  the concentration and oscillation properties of these Wigner distributions in three phase space regimes, depending on
the position of $(x, \xi)$  with respect to $\Sigma_E$.

 There is an  exact formula for the Wigner distributions \eqref{E:Wignera} of the eigenspace projections for the isotropic Harmonic oscillator in terms of Laguerre functions (see Appendix \ref{S:Laguerre}
 and \cite{T} for background on Laguerre functions).

\begin{prop} \label{WIGNERLAGUERRE} The Wigner distribution of 
Definition \ref{WIGNERPROJDEF} 
 is given by,
\begin{equation}    \label{E:Wigner-sp}
    W_{\hbar, E_N(\hbar)}(x, \xi) =  \frac{(-1)^N}{(\pi \hbar)^d}
    e^{-  2H/\hbar}  L^{(d-1)}_N(4H/\hbar),\qquad H=H(x,\xi)=\frac{\abs{x}^2+\abs{\xi}^2}{2},
\end{equation}
where $L_N^{(d-1)}$ is the associated Laguerre polynomial of degree $N$ and type $d-1$.
\end{prop}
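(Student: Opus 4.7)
The plan is to exploit the product structure of the Hermite basis $\phi_{\alpha,\hbar}(x) = \prod_{j=1}^d \phi_{\alpha_j, \hbar}(x_j)$. Since the Wigner transform of a rank-one tensor-product operator factors as a product,
\begin{equation*}
W_{\phi_{\alpha,\hbar}}(x, \xi) = \prod_{j=1}^d W^{(1)}_{\phi_{\alpha_j,\hbar}}(x_j, \xi_j),
\end{equation*}
inserting the orthonormal-basis expansion \eqref{COV} into \eqref{WIGNERDEF1} and interchanging sum and integral gives
\begin{equation*}
W_{\hbar, E_N(\hbar)}(x, \xi) = \sum_{|\alpha| = N} \prod_{j=1}^d W^{(1)}_{\phi_{\alpha_j,\hbar}}(x_j, \xi_j).
\end{equation*}
The problem thus reduces to two ingredients: a one-dimensional Wigner formula and a Laguerre addition identity.

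First I would establish the classical single-mode identity
\begin{equation*}
W^{(1)}_{\phi_{n,\hbar}}(x, \xi) = \frac{(-1)^n}{\pi\hbar}\, e^{-2H_1/\hbar}\, L_n(4H_1/\hbar), \qquad H_1 = \tfrac{1}{2}(x^2 + \xi^2).
\end{equation*}
The cleanest derivation in the framework of the paper is to use \eqref{E:Projector Integral Forbidden} to write the one-dimensional spectral projection as a contour integral of the Mehler kernel \eqref{E:Mehler}, insert this into \eqref{WIGNERDEF1}, and perform the Gaussian integral in $v$. A computation exploiting $\frac{\cos t-1}{\sin t}=-\tan(t/2)$ and $\frac{\cos t+1}{\sin t}=\cot(t/2)$ shows that the Wigner transform of the 1D propagator equals (up to signs and a factor of $\cos(t/2)^{-1}$) $\exp(-2iH_1\tan(t/2)/\hbar)$. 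Extracting the $n$-th Fourier coefficient via the substitution $s = e^{-it}$ together with the generating function $\sum_n L_n(z) s^n = (1-s)^{-1} e^{-zs/(1-s)}$ then isolates $L_n(4H_1/\hbar)$.

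Given the 1D formula, the product sum collapses cleanly: because $|\alpha| = N$, the signs give $\prod_j (-1)^{\alpha_j} = (-1)^N$, and $\sum_j H_j = H$, so
\begin{equation*}
W_{\hbar, E_N(\hbar)}(x, \xi) = \frac{(-1)^N}{(\pi\hbar)^d}\, e^{-2H/\hbar} \sum_{|\alpha| = N} \prod_{j=1}^d L_{\alpha_j}(4H_j/\hbar).
\end{equation*}
The last step is the Laguerre addition identity
\begin{equation*}
\sum_{|\alpha|=N}\prod_{j=1}^d L_{\alpha_j}(t_j) = L_N^{(d-1)}\!\Bigl(\sum_{j=1}^d t_j\Bigr),
\end{equation*}
which I would verify by matching the coefficients of $s^N$ in
\begin{equation*}
\prod_{j=1}^d \frac{1}{1-s}\, e^{-t_j s/(1-s)} = \frac{1}{(1-s)^d}\, e^{-(\sum_j t_j)s/(1-s)} = \sum_{N\geq 0} L_N^{(d-1)}\!\Bigl(\sum_j t_j\Bigr) s^N.
\end{equation*}
Specializing $t_j = 4H_j/\hbar$ completes the proof.

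The main obstacle is really only bookkeeping of constants and phases in the one-dimensional Wigner step: keeping track of powers of $i$, the branch of $(\sin t)^{-1/2}$, the direction of the contour as $\epsilon \to 0^+$, and the sign conventions built into \eqref{E:Projector Integral Forbidden}. Everything after the 1D formula is forced by the two generating-function identities. An equivalent and perhaps slicker route bypasses the product reduction altogether: apply the Mehler formula directly in $d$ dimensions, so the $v$-Gaussian in \eqref{WIGNERDEF1} evaluates at once to $\cos(t/2)^{-d}\, e^{-2iH\tan(t/2)/\hbar}$, and the substitution $s=e^{-it}$ combined with $\sum_N L_N^{(d-1)}(z) s^N = (1-s)^{-d} e^{-zs/(1-s)}$ produces $L_N^{(d-1)}(4H/\hbar)$ in a single residue computation.
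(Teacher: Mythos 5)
Your proposal is correct and complete, and it is essentially the standard derivation of this formula. The paper itself does not prove Proposition~\ref{WIGNERLAGUERRE}: it only cites references (Olver, Janssen--Zelditch for $d=1$; Thangavelu's \cite[Theorem 1.3.5]{T} and \cite{HZ19} for general $d$), so there is no in-paper proof to compare against. The cited sources proceed by exactly the mechanism you describe: the $d=1$ formula is the classical Wigner--Groenewold identity for Hermite functions, and the passage to general $d$ goes through the factorization of the Wigner transform over a tensor-product orthonormal basis together with the Laguerre addition theorem $\sum_{|\alpha|=N}\prod_j L_{\alpha_j}(t_j)=L_N^{(d-1)}(\sum_j t_j)$, which is itself an immediate consequence of multiplying generating functions. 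Your ``slicker route'' at the end --- computing the Wigner transform of the full $d$-dimensional Mehler kernel in one shot and reading off the Fourier coefficient via the generating function for $L_N^{(d-1)}$ --- is in fact closer to the spirit of \cite{HZ19}, where exactly that identity (Wigner transform of the metaplectic propagator equals $\cos(t/2)^{-d}e^{-2iH\tan(t/2)/\hbar}$ modulo the number-operator phase) is the key input for the subsequent asymptotic analysis; the 1D-then-addition route is the one taken in Thangavelu. Both are correct and the bookkeeping you flag (branch of $(\sin t)^{-1/2}$, $\epsilon\to 0^+$ prescription, sign in $s=e^{-it}$) is indeed the only delicate point; a useful sanity check is that the $w\mapsto -s$ substitution in $\sum_n L_n^{(d-1)}(z)w^n=(1-w)^{-d}e^{-zw/(1-w)}$ is what produces the $(-1)^N$ and that the denominator $(1+s)^d$ together with $e^{itd/2}$ absorbs exactly the $\cos(t/2)^{-d}$ from the $v$-Gaussian.
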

See  \cite{O,JZ} for $d=1$ and  \cite[Theorem 1.3.5]{T} and \cite{HZ19} for general dimensions.
The second result is a weak* limit result for normalized Wigner distributions.

\begin{prop}  \label{OPWa} Let  $a_0$ be a semi-classical symbol of order zero and let $Op_h^w(a)$ be its Weyl quantization.  Then, as $\hbar \to 0$, with
$E_N(\hbar) \to E$,
$$\frac{1}{\dim V_{\hbar, E_N(\hbar)}} \int_{T^* \R^d} a_0(x,\xi)  W_{\hbar, E_N(\hbar)}(x,\xi) dx d \xi \to \dashint_{\Sigma_E} a_0 d \mu_E,$$
where $d\mu_E$ is Liouville measure on $\Sigma_E$ and $ \dashint_{\Sigma_E} a_0 d \mu_E = \frac{1}{\mu_E(\Sigma_E)}  \int_{\Sigma_E} a_0 d \mu_E$.
\end{prop}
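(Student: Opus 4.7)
The plan is to combine the trace identity \eqref{TRACEP}, the $U(d)$-invariance of $\Pi_{\hbar, E_N(\hbar)}$, and the semi-classical functional calculus for $\hat H_\hbar$. First I would invoke \eqref{TRACEP} to rewrite the claim as
\[
\frac{1}{\dim V_{\hbar, E_N(\hbar)}} \mathrm{Tr}\bigl( Op_h^w(a_0)\, \Pi_{\hbar, E_N(\hbar)} \bigr) \longrightarrow \dashint_{\Sigma_E} a_0\, d\mu_E,
\]
so the task reduces to evaluating this trace to leading order in $\dim V_{\hbar, E_N(\hbar)}$.

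Next I would exploit $U(d)$-symmetry. Because $\hat H_\hbar$ commutes with the metaplectic representation $\mu(g)$ of $g \in U(d)$, so does $\Pi_{\hbar, E_N(\hbar)}$. Metaplectic covariance gives $\mu(g)^{-1} Op_h^w(a_0)\, \mu(g) = Op_h^w(a_0 \circ T_g)$, and cyclicity of the trace yields $\mathrm{Tr}\, Op_h^w(a_0)\Pi_{\hbar, E_N(\hbar)} = \mathrm{Tr}\, Op_h^w(a_0 \circ T_g)\Pi_{\hbar, E_N(\hbar)}$ for every $g \in U(d)$. Averaging against Haar measure replaces $a_0$ by its $U(d)$-average $\bar a_0 := \int_{U(d)} a_0 \circ T_g\, dg$. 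Since $U(d)$ acts transitively on the energy spheres $\{H = s\} \subset T^*\R^d \cong \C^d$ preserving Liouville measure, $\bar a_0$ depends only on $H$, with $\bar a_0(s) = \dashint_{\Sigma_s} a_0\, d\mu_s$; in particular, $\bar a_0(E) = \dashint_{\Sigma_E} a_0\, d\mu_E$.

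I would then apply the semi-classical functional calculus (Helffer--Sj\"ostrand; see \cite{Zw}) to the smooth function $\bar a_0$, together with a cutoff $\chi \in C_c^\infty(\R)$ with $\chi \equiv 1$ near $E$: one has $Op_h^w(\chi(H)\bar a_0(H)) = (\chi \bar a_0)(\hat H_\hbar) + \hbar R_\hbar$ with $R_\hbar$ a bounded semi-classical pseudodifferential operator. Since $(\chi\bar a_0)(\hat H_\hbar)$ acts as the scalar $\bar a_0(E_N(\hbar))$ on $V_{\hbar, E_N(\hbar)}$ (for $\hbar$ small so that $\chi(E_N(\hbar)) = 1$), the main term yields
\[
\mathrm{Tr}\bigl( (\chi\bar a_0)(\hat H_\hbar) \Pi_{\hbar, E_N(\hbar)} \bigr) = \bar a_0(E_N(\hbar))\, \dim V_{\hbar, E_N(\hbar)},
\]
with remainder $O(\hbar \dim V_{\hbar, E_N(\hbar)})$. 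The contribution of the complementary piece $(1-\chi)\bar a_0$ is controlled by the rapid decay of $W_{\hbar, E_N(\hbar)}$ away from $\Sigma_E$, which follows from the exponential factor $e^{-2H/\hbar}$ in the Laguerre formula \eqref{E:Wigner-sp} together with standard bounds on $L_N^{(d-1)}$. Dividing by $\dim V_{\hbar, E_N(\hbar)}$ and letting $\hbar \to 0$ with $E_N(\hbar) \to E$ then gives the stated limit $\bar a_0(E) = \dashint_{\Sigma_E} a_0\, d\mu_E$.

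The hardest part will be keeping the error terms in the functional calculus and the off-shell cutoff under control when $a_0$ has non-compact support, since $\int |W_{\hbar, E_N(\hbar)}|\, dx d\xi$ is not itself $O(\dim V_{\hbar, E_N(\hbar)})$ and one must rely on cancellation captured by the functional calculus identity rather than a sup-norm estimate. An alternative, more elementary route is to substitute \eqref{E:Wigner-sp} directly into the left-hand side, reduce to a one-dimensional integral over $s = H$ via the coarea formula, and apply Plancherel--Rotach asymptotics for Laguerre polynomials to show that the rescaled measure $\frac{1}{\dim V_{\hbar, E_N(\hbar)}}\, W_{\hbar, E_N(\hbar)}^{\mathrm{rad}}(s)\, \mu_s(\Sigma_s)\, ds$ converges weakly to $\delta_{s=E}$.
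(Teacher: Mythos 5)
Your proposal is essentially correct in structure and would work, but one intermediate sentence is misleading enough that it's worth flagging. You write that the contribution of $(1-\chi)\bar a_0$ ``is controlled by the rapid decay of $W_{\hbar, E_N(\hbar)}$ away from $\Sigma_E$, which follows from the exponential factor $e^{-2H/\hbar}$.'' This is only true on the exterior side $\{H>E\}$. On the interior $\{H<E\}$, the exponential factor is cancelled by the growth of $L_N^{(d-1)}(4H/\hbar)$, and $W_{\hbar,E_N(\hbar)}$ oscillates with $|W_{\hbar,E_N(\hbar)}|\simeq \hbar^{-d+1/2}$ on compact subsets of $\{0<H<E\}$ (this is the Bessel/trigonometric regime of Theorem \ref{BESSELPROP}). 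Integrating this sup-norm bound over a shell of fixed volume gives $O(\hbar^{-d+1/2})$, whereas $\dim V_{\hbar,E_N(\hbar)}\simeq\hbar^{-d+1}$, so the ratio is $\hbar^{-1/2}\to\infty$: a naive modulus estimate fails badly. You do acknowledge this in your final paragraph, but the earlier sentence is incorrect as stated.

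The correct way to close the gap is exactly the operator-theoretic cancellation you invoke, and you should lean on it rather than on decay: write $Op_h^w\bigl((1-\chi)(H)\,\bar a_0(H)\bigr) = Op_h^w\bigl((1-\chi)(H)\bigr)\,Op_h^w\bigl(\bar a_0(H)\bigr) + O_{L^2\to L^2}(\hbar)$ by the symbolic calculus, and then $Op_h^w\bigl((1-\chi)(H)\bigr) = (1-\chi)(\hat H_\hbar) + O_{L^2\to L^2}(\hbar)$ by the Helffer--Sj\"ostrand functional calculus applied to the compactly supported bump $\chi$. Since $(1-\chi)(\hat H_\hbar)\Pi_{\hbar,E_N(\hbar)} = (1-\chi)(E_N(\hbar))\,\Pi_{\hbar,E_N(\hbar)} = 0$ for $\hbar$ small, and $|\mathrm{Tr}(R\,\Pi_{\hbar,E_N(\hbar)})|\le \|R\|_{L^2\to L^2}\dim V_{\hbar,E_N(\hbar)}$ for any bounded $R$, the off-shell trace is $O(\hbar\dim V_{\hbar,E_N(\hbar)})$. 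This captures precisely the oscillatory cancellation that the pointwise bound misses. With that fix, your argument (trace identity \eqref{TRACEP}, $U(d)$-averaging by metaplectic covariance and transitivity on spheres $\{H=s\}$, functional calculus on the compactly supported piece) is sound.

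One further streamlining worth noting: you do not actually need metaplectic covariance to pass from $a_0$ to its spherical average. By Proposition \ref{WIGNERLAGUERRE}, $W_{\hbar,E_N(\hbar)}(x,\xi)$ is already a radial function of $H(x,\xi)$, so $\int a_0 W = \int \bar a_0(H)\, W(H)$ follows trivially by integrating over spheres first. The coarea formula then reduces everything to a one-dimensional integral $\int \bar a_0(s)\,W(s)\,\mu_s(\Sigma_s)\,ds$, and the limit $\to \bar a_0(E)\dim V_{\hbar,E_N(\hbar)}$ can be extracted either by your functional-calculus route or by the Plancherel--Rotach/Laguerre asymptotics you mention as an alternative; the latter is the more ``hands-on'' path and is consistent with how the paper organizes the explicit Laguerre identity.
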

 Thus,  $W_{\hbar, E_N(\hbar)}(x,\xi) \to \delta_{\Sigma_E}$ in the sense of weak* convergence. But this limit is due to the oscillations inside the energy ball; the  pointwise asymptotics are far more complicated.



\begin{figure}
\begin{center}
  \includegraphics[width=.6 \textwidth]{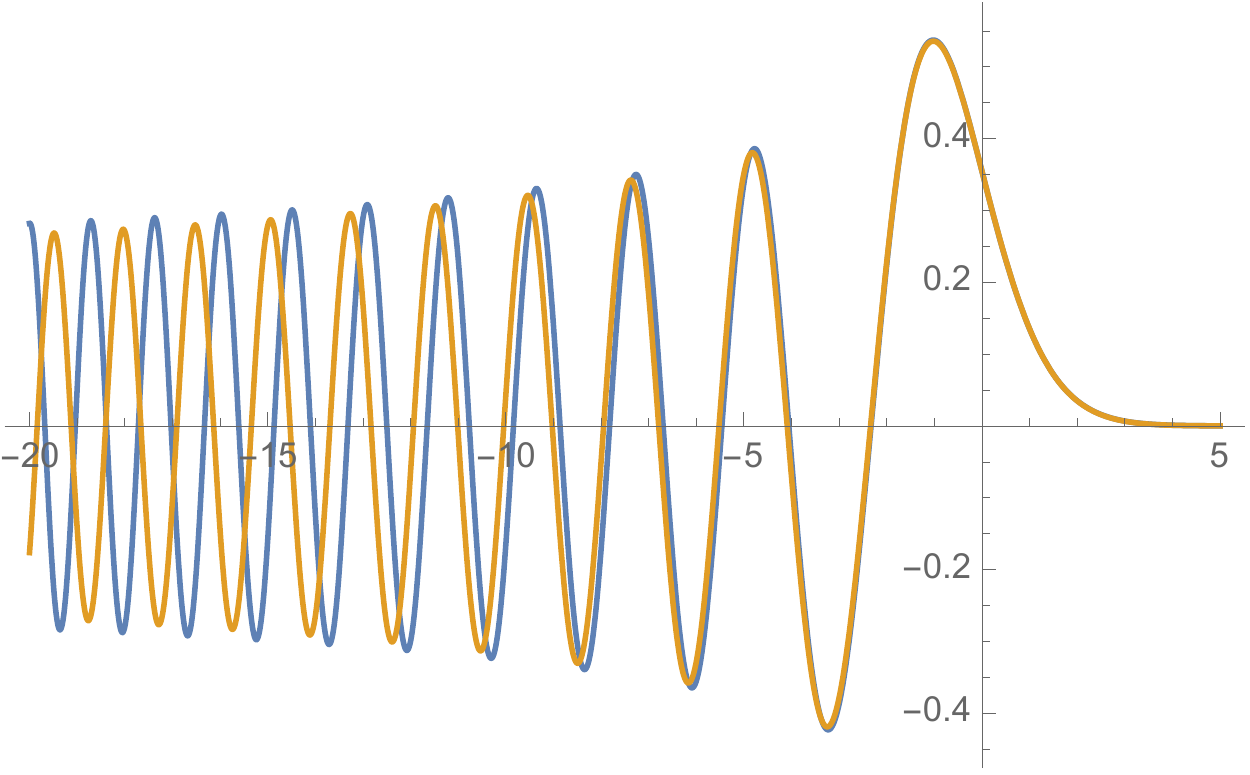} 
\end{center}
\caption{\label{fig-Wigner-eigenspace-2}
 The Wigner function $W_{\hbar, E_N(\hbar)}$ of the eigenspace projection $\Pi_{\hbar, E_N(\hbar)}$ is always radial (see Proposition \ref{WIGNERLAGUERRE}). Displayed above is the graph of the Airy function (orange) and of $W_{\hbar, E_N(\hbar)}$ with $N=500$ (blue) as a function of the rescaled radial variable $\rho$ in a $\hbar^{2/3}$ tube around the energy surface $H(x,\xi)=E_N(\hbar)=1/2.$ Theorem \ref{SCALINGCOR-old} predicts that, when properly scaled, $W_{\hbar, E_N(\hbar)}$ should converge to the Airy function (with the rate of convergence being slower farther from the energy surface, which is defined here by $\rho=0$).}
\vspace{-10pt}
\end{figure}

%


 \subsection{Interface asymptotics for Wigner distributions of  individual eigenspace projections}

Our first main result gives the scaling asymptotics for the Wigner function $W_{\hbar, E_N(\hbar)}(x,\xi)$ of the projection onto the $E$-eigenspace of $\widehat{H}_{\hbar}$ when $(x,\xi)$ lies in an $\hbar^{2/3}$ neighborhood of the energy surface $\Sigma_E.$
\begin{theo}\label{SCALINGCOR-old} 
Fix $E>0,d\geq 1$. Assume $E_N(\hbar)  = E$ and let $\hbar = \hbar_N(E)$ \eqref{hNEDEF}. 
Suppose $(x,\xi)\in T^*\R^d$ satisfies
\begin{equation} \label{udef}
H(x,\xi)=E + u \lr{\frac{\hbar}{2E}}^{2/3},\qquad u \in \R,\, H(x,\xi)=\frac{\norm{x}^2+\norm{\xi}^2}{2}\end{equation}
with $\abs{u}<\hbar^{-1/3}$. \footnote {The errors blow up when $u=\hbar^{-1/3}$.} Then,
\begin{equation}
  \label{E:W-scaling}
 W_{\hbar, E_N(\hbar)}(x, \xi)=
  \begin{cases}
 \frac{2}{(2\pi \hbar)^d} \lr{\frac{\hbar}{2E}}^{1/3} \lr{\Ai(u/E) + O\lr{(1+\abs{u})^{1/4}u^2\hbar^{2/3}}},&\qquad u<0\\
 \frac{2}{(2\pi \hbar)^d} \lr{\frac{\hbar}{2E}}^{1/3} \Ai(u/E)\lr{1 + O\lr{(1+\abs{u})^{3/2}u\hbar^{2/3}}},&\qquad u>0
  \end{cases}
\end{equation}
\end{theo}

\noindent Here, $\Ai(x)$ is the Airy function. The Airy scaling of $W_{\hbar, E_N(\hbar)}$ is illustrated in Figure 2. 
The assumption \eqref{udef} may be stated more invariantly that $(x, \xi)$ lies in the tube of radius $O(\hbar^{2/3})$ around $\Sigma_E$ defined by the gradient flow of $H$ with respect to the Euclidean metric on $T^*\R^d$. The asymptotics are illustrated in figure \ref{fig-Wigner-eigenspace-2}. Due to the behavior of the Airy function $\Ai(s)$, these formulae show that in the semi-classical limit $\hbar \to 0$, $E_N(\hbar) \to E$, $W_{\hbar, E_N(\hbar)}(x, \xi)$  concentrates on the energy surface surface $\Sigma_E$, is oscillatory inside the energy ball $\{H \leq E\}$ and is exponentially decaying outside the ball. 

\subsection{\label{BESSELSECTINTRO} Interior Bessel asymptotics}

In addition to the Airy asymptotics in an $\hbar^{2/3}$-tube around
$\Sigma_E$, $W_{\hbar, E_N(\hbar)}$ exhibits Bessel asymptotics
in the interior of $\Sigma_E$.
There are two (or three, depending on taste)  uniform asymptotic regimes for the Laguerre polynomial $L_n^{(\alpha)}(x)$: Bessel, Trigonometric, Airy. 

 For  $t \in [0, 1)$, define
$$A(t) = \half [\sqrt{t -t^2} + \sin^{-1} \sqrt{t}], \; t \in [0,1].$$
For $t <0$ the $\sin^{-1}$ is replaced by $\sinh^{-1}$ and the $\half$ by
$i/2$ (see \cite[(2.7)]{FW}). Also, let $J_{d-1}$ be the Bessel function (of the first kind) of index $d-1$.

 \begin{theo} \label{BESSELPROP} Fix $E>0$ and suppose $E_N(\hbar)=E.$ For each $(x,\xi)\in T^*\R^d$ write
\[H_E := \frac{H(x,\xi)}{E}=\frac{\norm{x}^2+\norm{\xi}^2}{2E},\qquad \nu_E:=\frac{4E}{\hbar}.\]
Fix $0<a < 1/2.$ Uniformly over $a\leq H_E \leq 1-a$, there is an asymptotic expansion,
\[W_{\hbar, E_N(\hbar)}(x, \xi)=\frac{2}{(2\pi\hbar)^d}\left[\frac{J_{d-1}(\nu_E A(H_E))}{A(H_E)^{d-1}}\alpha_0(H_E)+O\lr{ \nu_E^{-1}\abs{\frac{J_{d}(\nu_E A(H_E))}{A(H_E)^{d}}}}\right].\]
In particular, uniformly over $H_E$ in a compact subset of $(0,1),$ we find
\begin{equation}
W_{\hbar, E_N(\hbar)}(x, \xi) = (2\pi\hbar)^{-d+1/2} P_{H,E}\cos\lr{\xi_{\hbar, E,H}}+O\lr{\hbar^{-d+3/2}},\label{E:interior-cosine}
\end{equation}
where we've set
\[ \xi_{\hbar, E,H} =-\frac{\pi}{4} -\frac{2H}{\hbar}\lr{H_E^{-1}-1}^{1/2}+\frac{2E}{\hbar}\cos^{-1}\lr{H_E^{1/2}}\]
and 
\[P_{E,H}:=\lr{\pi E^{1/2}\lr{H_E^{-1}-1}^{1/4}\lr{H_E}^{d/2}}^{-1}.\]
\end{theo}

\subsection{Small ball integrals }

The interior Bessel asymptotics do not encompass the behavior of $W_{\hbar, E_N(\hbar)}$ in shrinking balls around $\rho = 0$. In that
case, we have,
\begin{prop} \label{SMBALLS} For $\epsilon>0$ sufficiently small and for
any $a(x, \xi) \in C_b(T^*\R^d)$,
   \begin{equation}
\int_{T^*\R^d} a(x, \xi) 
W_{\hbar, E_N(\hbar)}(x,\xi) \psi_{\epsilon,\hbar}(x,\xi)d x d \xi= O(\hbar^{\frac{1-d}{2}-2d\epsilon}\norm{a}_{L^\infty(B_0(\hbar^{1/2-\epsilon}))}).
\label{E:localized}
\end{equation}
where $\psi_{\epsilon,\hbar}$ is a smooth radial cut-off that is identically $1$ on the ball of radius $\hbar^{1/2-\epsilon}$ and is identically $0$ outside the ball of radius $2\hbar^{1/2-\epsilon}.$ 
\end{prop}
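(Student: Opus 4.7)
The plan is to exploit the exact Laguerre representation of $W_{\hbar, E_N(\hbar)}$ from Proposition \ref{WIGNERLAGUERRE} and reduce the estimate to a one-dimensional integral, then invoke the Mehler--Heine (Bessel) approximation for Laguerre polynomials, which applies because the small ball forces $4H/\hbar \ll N$ throughout the support of $\psi_{\epsilon,\hbar}$.

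First I would pull out $\|a\|_{L^\infty(B_0(\hbar^{1/2-\epsilon}))}$ and use the radial symmetry of $W_{\hbar, E_N(\hbar)}$, passing to polar coordinates $r^2 = |x|^2 + |\xi|^2 = 2H$ on $T^*\R^d \cong \R^{2d}$. With the substitution $t = 4H/\hbar = 2r^2/\hbar$, the Jacobian factor $\omega_{2d-1} r^{2d-1}\,dr$ becomes a constant multiple of $t^{d-1}\hbar^d\,dt$, which exactly cancels the $\hbar^{-d}$ prefactor in $W_{\hbar, E_N(\hbar)}$. The task reduces to bounding
\[
I_\hbar := \int_0^{T_\hbar} t^{d-1} e^{-t/2} \abs{L_N^{(d-1)}(t)} \, dt, \qquad T_\hbar = 8\hbar^{-2\epsilon}.
\]

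Next I would invoke the Mehler--Heine asymptotic: since $T_\hbar/N = O(\hbar^{1-2\epsilon}) \to 0$, one has uniformly on $(0, T_\hbar]$,
\[
e^{-t/2} L_N^{(d-1)}(t) = N^{(d-1)/2} t^{-(d-1)/2} J_{d-1}\lr{2\sqrt{Nt}}\,\bigl(1 + o(1)\bigr).
\]
The apparent singularity at $t = 0$ is illusory: since $J_{d-1}(2\sqrt{Nt}) \sim (Nt)^{(d-1)/2}/(2^{d-1}(d-1)!)$ as $t \to 0^+$, the product is bounded down to $t = 0$ and recovers the known value $L_N^{(d-1)}(0) = \binom{N+d-1}{d-1}$. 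Substituting $s = 2\sqrt{Nt}$ transforms $I_\hbar$ into a constant multiple of $N^{-1} \int_0^{S_\hbar} s^d |J_{d-1}(s)|\,ds$ with $S_\hbar = 2\sqrt{N T_\hbar} \sim \hbar^{-1/2-\epsilon}$.

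Using the standard bounds $|J_{d-1}(s)| \lesssim s^{d-1}$ for $s \leq 1$ and $|J_{d-1}(s)| \lesssim s^{-1/2}$ for $s \geq 1$, the Bessel integral is dominated by a constant multiple of $S_\hbar^{d+1/2}$. With $N \sim \hbar^{-1}$ this yields
\[
I_\hbar \lesssim \hbar \cdot S_\hbar^{d+1/2} \sim \hbar^{\,3/4 - d/2 - (2d+1)\epsilon/2},
\]
which for $d \geq 1$ is strictly stronger than the claimed bound $\hbar^{(1-d)/2 - 2d\epsilon}$, so the latter follows a fortiori. The main technical point is justifying the Mehler--Heine approximation uniformly on $(0, T_\hbar]$ with an effective error estimate; this follows from the standard uniform asymptotic expansions for Laguerre polynomials in the regime $x/n \to 0$. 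Once uniform control is in hand, the remaining steps are bookkeeping with standard Bessel bounds.
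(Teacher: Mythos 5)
Your proposal is correct and, as a bonus, yields a strictly stronger exponent than the one claimed in the Proposition. The natural route given the framework set up in the paper is exactly what you do: use the exact Laguerre formula of Proposition \ref{WIGNERLAGUERRE}, convert to radial form, and invoke the Bessel-regime asymptotics for $L_N^{(d-1)}$ (this is the uniform Frenzen--Wong expansion that the paper already cites and uses for Theorem \ref{BESSELPROP}). I verified the Jacobian bookkeeping: in polar coordinates on $\R^{2d}$ with $t = 4H/\hbar = 2r^2/\hbar$, one gets $r^{2d-1}\,dr = 2^{-(d+1)}\hbar^d\, t^{d-1}\,dt$, so the $\hbar^{-d}$ from $W_{\hbar,E_N(\hbar)}$ cancels and the problem reduces to $I_\hbar = \int_0^{T_\hbar} t^{d-1} e^{-t/2}|L_N^{(d-1)}(t)|\,dt$ with $T_\hbar = 8\hbar^{-2\epsilon}$, as you state. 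Your final bound $I_\hbar \lesssim \hbar\,S_\hbar^{d+1/2} \sim \hbar^{3/4 - d/2 - (2d+1)\epsilon/2}$ checks out and does beat $\hbar^{(1-d)/2-2d\epsilon}$ by a margin of $\hbar^{1/4 + (2d-1)\epsilon/2}$.

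One small technical remark: rather than pushing the Mehler--Heine formula with a "$1+o(1)$" error uniformly over the $N$-dependent interval $(0,T_\hbar]$, it is cleaner to invoke the classical two-regime pointwise Laguerre bounds directly, namely $|e^{-t/2}L_N^{(d-1)}(t)| \lesssim N^{d-1}$ for $0 \le t \le 1/N$ and $|e^{-t/2}L_N^{(d-1)}(t)| \lesssim N^{(d-1)/2 - 1/4}\,t^{-(d-1)/2 - 1/4}$ for $1/N \le t \le cN$. These give exactly the same integrals as your Bessel substitution (the near-$0$ piece contributes $O(\hbar)$, the rest gives $\hbar^{3/4 - d/2 - (d+1/2)\epsilon}$) but avoid having to state a uniform $o(1)$ error over a growing interval. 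Either way the argument is sound.
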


\subsection{Exterior asymptotics} If $E_N(\hbar) \to E$, then $W_{\hbar, E_N(\hbar)}(x, \xi)$ concentrates on $\Sigma_E$ and is exponentially decaying in the complement $H=H(x,\xi)> E$. The precise statement is,
\begin{prop}\label{EXTDECAY} Suppose that $H_E=H(x, \xi)/E>1$ and let $E_N(\hbar)  =E$. Then, there exists $C_1>0$ so that
\[|W_{\hbar, E_N(\hbar)}(x, \xi)| \leq C_1 \hbar^{-d + \frac{1}{2}} 
e^{- \frac{2 E}{\hbar}[\sqrt{H_E^2 -H_E} - \cosh^{-1}\sqrt{H_E} ]}. \]
Moreover, as $H(x,\xi) \to \infty$, there exists $C_2>0$ so that
$$|W_{\hbar, E_N(\hbar)}(x, \xi)| \leq C_2 \hbar^{-d + \frac{1}{2}} e^{- \frac{2H(x,\xi)}{\hbar}}. $$
\end{prop}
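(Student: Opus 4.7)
The plan is to start from the exact Laguerre identity of Proposition \ref{WIGNERLAGUERRE},
\[ W_{\hbar, E_N(\hbar)}(x,\xi) = \frac{(-1)^N}{(\pi\hbar)^d}\, e^{-2H/\hbar}\, L_N^{(d-1)}(4H/\hbar), \]
and extract the exterior decay from the Plancherel--Rotach asymptotics of $L_N^{(d-1)}$ to the right of the largest zero. Writing $H_E = H/E$ and $\nu_E = 4E/\hbar = 4N+2d$, the Laguerre argument is $y = \nu_E H_E$. The zeros of $L_N^{(d-1)}$ asymptotically fill $[0,\nu_E]$, so $H_E > 1$ puts us in the classical "forbidden" regime, where the uniform expansion takes the form
\[ e^{-2H/\hbar}\, L_N^{(d-1)}(4H/\hbar) = Q(H_E)\, \hbar^{1/2}\, e^{-(2E/\hbar)\Phi(H_E)}\,\bigl(1+O(\hbar)\bigr), \qquad \Phi(H_E) = \sqrt{H_E^2 - H_E} - \cosh^{-1}\sqrt{H_E}, \]
with an algebraic prefactor $Q(H_E)$ bounded on compact subsets of $(1,\infty)$. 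Multiplying by $(\pi\hbar)^{-d}$ gives $|W_{\hbar,E_N(\hbar)}(x,\xi)| \leq C_1 \hbar^{-d+1/2} e^{-(2E/\hbar)\Phi(H_E)}$, which is the first inequality.

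To prove the underlying Plancherel--Rotach bound self-contained (along the lines of \cite{HZ19}), I would apply steepest descent to the generating-function contour integral
\[ L_N^{(d-1)}(y) = \frac{1}{2\pi i}\oint \frac{e^{-yz/(1-z)}}{(1-z)^d\, z^{N+1}}\, dz, \]
writing the large-parameter part of the integrand as $e^{N\Psi(z)}$ with $\Psi(z) = -\tfrac{4H_E z}{1-z} - \log z$ after using $y/N = 4H_E + O(N^{-1})$. The saddle equation $\Psi'(z) = 0$ reduces to $z^2 + (4H_E - 2)z + 1 = 0$; for $H_E > 1$ it has two negative real roots $z_\pm$ with $z_+ z_- = 1$, and deforming the unit circle through the dominant saddle produces exactly the rate $-4\Phi(H_E)$ after elementary hyperbolic manipulations, while the transverse Gaussian contributes the $\hbar^{1/2}$ prefactor. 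At $H_E = 1$ the two saddles coalesce, continuously matching the Airy transition of Theorem \ref{SCALINGCOR-old}.

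For the asymptotic inequality as $H(x,\xi) \to \infty$, I would expand $\Phi$ using $\sqrt{H_E^2 - H_E} = H_E - \tfrac12 + O(H_E^{-1})$ and $\cosh^{-1}\sqrt{H_E} = \tfrac12 \log(4H_E) + O(H_E^{-1})$, giving
\[ \tfrac{2E}{\hbar}\Phi(H_E) = \tfrac{2H}{\hbar} - \tfrac{E}{\hbar}\log(4H/E) - \tfrac{E}{\hbar} + O\!\bigl(E/(\hbar H_E)\bigr). \]
The logarithmic and constant-in-$H_E$ corrections inside the exponent become at worst polynomial factors in $\hbar^{-1}$ and in $H/E$, which are absorbed into the constant $C_2$. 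Alternatively, for very large $y$ one can bypass the uniform expansion entirely by directly estimating the Laguerre series $L_N^{(d-1)}(y) = \sum_{k=0}^N \binom{N+d-1}{N-k}\tfrac{(-y)^k}{k!}$ via Stirling, which already produces an $e^{-2H/\hbar}$--dominated tail.

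The main obstacle is making the Plancherel--Rotach expansion genuinely uniform in $H_E$ across the whole range $[1+\delta, \infty)$ with an explicit, dimension-dependent constant, since standard references typically state the asymptotics either at fixed $H_E$ or inside a bounded Airy window near $H_E = 1$. The cleanest resolution is to patch together three regimes with overlapping validity: the Airy transition at the edge (already available from Theorem \ref{SCALINGCOR-old}), the bulk-exterior steepest descent giving the sharp exponent and the $\hbar^{1/2}$ prefactor on compact subsets of $(1,\infty)$, and the elementary series estimate that handles the $H_E \to \infty$ tail.
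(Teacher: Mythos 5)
Your overall strategy --- the exact Laguerre identity of Proposition \ref{WIGNERLAGUERRE} followed by exterior Plancherel--Rotach asymptotics of $L_N^{(d-1)}$ extracted by steepest descent on a contour integral --- is the right one and is essentially the method behind the paper (the paper's cited source uses the Frenzen--Wong oscillatory representation \eqref{nu2} with the cubic phase rather than the raw generating-function contour, but this is a reparametrization of the same saddle analysis). Your saddle equation $z^2+(4H_E-2)z+1=0$, the two negative real roots with product one, the emergent rate $\Phi(H_E)=\sqrt{H_E^2-H_E}-\cosh^{-1}\sqrt{H_E}$, and the transverse Gaussian prefactor $\hbar^{1/2}$ on compact subsets of $(1,\infty)$ are all correct and give the first inequality.

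The second inequality, however, has a genuine gap. Your expansion $(2E/\hbar)\Phi(H_E)=\tfrac{2H}{\hbar}-\tfrac{E}{\hbar}\log(4H/E)-\tfrac{E}{\hbar}+O(E/(\hbar H_E))$ is right, but exponentiating gives
\[
e^{-(2E/\hbar)\Phi(H_E)}=e^{-2H/\hbar}\,e^{E/\hbar}\,(4H/E)^{E/\hbar}\,(1+o(1)),
\]
and the factor $(4H/E)^{E/\hbar}$ cannot be ``absorbed into the constant $C_2$'': its exponent $E/\hbar=N+d/2$ is itself the large parameter, so this factor is unbounded both as $H\to\infty$ and as $\hbar\to 0$. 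The alternative direct series estimate you propose hits the same wall: since $(-1)^N L_N^{(d-1)}(y)\sim y^N/N!$ as $y\to\infty$, one finds $e^{2H/\hbar}\,|W_{\hbar,E_N(\hbar)}(x,\xi)|\sim (\pi\hbar)^{-d}(4H/\hbar)^N/N!$, which grows like $H^N$ for fixed $\hbar$ and is not dominated by a constant. Note also that $\Phi(H_E)<H_E$ for all $H_E>1$ (indeed $\Phi'(H_E)=\sqrt{1-1/H_E}<1$ and $\Phi(1)=0$), so the first inequality gives a strictly \emph{weaker} exponential rate than $e^{-2H/\hbar}$; the second inequality therefore cannot be deduced from the first, and as a literal pointwise bound with the prefactor $\hbar^{-d+1/2}$ it must be read modulo the subexponential factor $(4H/\hbar)^N/N!$ (or, equivalently, with $e^{-2H/\hbar}$ replaced by $e^{-c\cdot 2H/\hbar}$ for any $c<1$ once $H_E$ is large). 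Your proposal asserts the stated bound follows by a Stirling estimate but does not actually produce it, and the handwave about ``absorbing polynomial factors'' is where the argument breaks.
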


 \subsection{\label{SUPSECT} Supremum at $\rho =0$} 
 
 The  reader may notice the `spike' at the origin $\rho = 0$; it is  the point at which $W_{\hbar, E_N(\hbar)}$ has its global maximum (see Figure \ref{fig-Wigner-0}).  
 The height is given by    
\begin{equation} \label{BUP} \begin{array}{lll} W_{\hbar, E_N(\hbar)}(0,0) & = &
 \frac{(-1)^N}{(\pi \hbar)^d}
      L^{d-1}_N(0) =  \frac{(-1)^N}{(\pi \hbar)^d}  \frac{\Gamma(N + d)}{\Gamma(N +1) \Gamma(d)}  \simeq \frac{(-1)^N}{\pi ^d}  C_d \hbar^{-d} N^{d-1}.\end{array} \end{equation}
The last statement follows from the explicit formula
$   L^{(d-1)}_N(0) =  \frac{\Gamma(N + d)}{\Gamma(N +1) \Gamma(d)} = \frac{(N + d-1)!}{N! (d-1)!}$ (see e.g. \cite[(1.1.39)]{T}).

\begin{figure}
\begin{center}
  \includegraphics[width=.6 \textwidth]{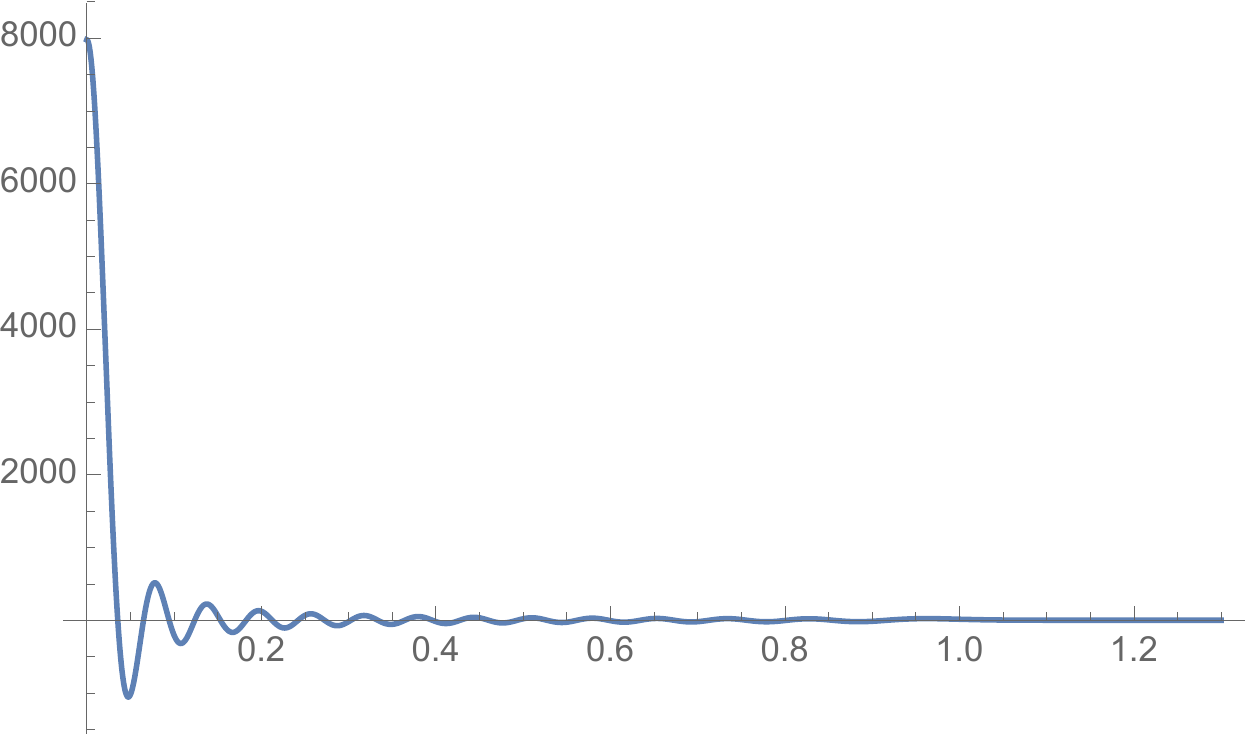} 
\end{center}
\caption{\label{fig-Wigner-0}
 The Wigner function $W_{\hbar, E_N(\hbar)}$ of the eigenspace projection $\Pi_{\hbar, E_N(\hbar)}$ is always radial (see Proposition \ref{WIGNERLAGUERRE}). Displayed above is the blow-up of the Wigner function at $(0,0)$.}
\vspace{-10pt}
\end{figure}

On the complement of the ball  $B(0, \hbar^{\half - \epsilon})$, the Wigner
distribution is much smaller than at its maximum. The following is proved
by combining the estimates of Theorem \ref{SCALINGCOR-old} , Theorem \ref{BESSELPROP} and Proposition
\ref{EXTDECAY}.

\begin{prop} \label{WIGBOUND} For any $\epsilon > 0$,
$$\sup_{(x, \xi): H(x, \xi) \geq \epsilon} |W_{\hbar, E_N(\hbar)}(x, \xi)| \leq C \hbar^{-d + \frac{1}{3}}. $$
The supremum in this region  is achieved in $\{H \leq E\}$ at $(x,\xi)$
satisfying \eqref{udef} where $u$ is the global maximum of $\rm{Ai}(x)$.
\end{prop}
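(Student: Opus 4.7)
The strategy is to partition the region $\{(x,\xi) : H(x,\xi) \geq \epsilon\}$ into three zones in which one of Theorem \ref{SCALINGCOR-old}, Theorem \ref{BESSELPROP}, or Proposition \ref{EXTDECAY} applies, then observe that the Airy zone produces the dominant bound $\hbar^{-d+1/3}$ while the other two zones give strictly smaller contributions of order $\hbar^{-d+1/2}$ (times a bounded or exponentially decaying factor).

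Choose a large constant $C_0 > 0$ (to be fixed at the end) and partition $\{H \geq \epsilon\}$ into
\[
\mathcal{I} := \{\epsilon \leq H \leq E - C_0 \hbar^{2/3}\}, \quad \mathcal{T} := \{|H - E| \leq C_0 \hbar^{2/3}\}, \quad \mathcal{E} := \{H \geq E + C_0 \hbar^{2/3}\}.
\]
On $\mathcal{I}$, the ratio $H_E = H/E$ lies in a compact subset of $(0,1)$ (provided $\epsilon$ is held fixed and $\hbar$ is small), so Theorem \ref{BESSELPROP} gives $|W_{\hbar, E_N(\hbar)}(x,\xi)| \leq C \hbar^{-d + 1/2}$ via \eqref{E:interior-cosine}; the cosine is bounded by $1$ and the prefactor $P_{E,H}$ is bounded uniformly on compact subsets of $(0,1)$. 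On $\mathcal{E}$, Proposition \ref{EXTDECAY} yields
\[
|W_{\hbar, E_N(\hbar)}(x,\xi)| \leq C \hbar^{-d + 1/2} \exp\left(-\tfrac{2E}{\hbar}\bigl[\sqrt{H_E^2 - H_E} - \cosh^{-1}\sqrt{H_E}\bigr]\right),
\]
and since the exponent is negative and, for $H_E \geq 1 + c\hbar^{2/3}$, has magnitude that is at least of order $\hbar^{-1} \cdot (c\hbar^{2/3})^{3/2} = c^{3/2}$, the bound is certainly $O(\hbar^{-d+1/2})$.

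On the Airy tube $\mathcal{T}$, write $H(x,\xi) = E + u(\hbar/2E)^{2/3}$ with $|u| \leq 2E\, C_0$. Theorem \ref{SCALINGCOR-old} gives
\[
|W_{\hbar, E_N(\hbar)}(x,\xi)| = \frac{2}{(2\pi\hbar)^d}\left(\frac{\hbar}{2E}\right)^{1/3} \bigl(|\mathrm{Ai}(u/E)| + O(\hbar^{2/3})\bigr),
\]
uniformly on $\mathcal{T}$. Since $\mathrm{Ai}$ is bounded on $\mathbb{R}$, this yields the bound $C\hbar^{-d+1/3}$ on $\mathcal{T}$. Comparing the three zones, $\hbar^{-d+1/2} = o(\hbar^{-d+1/3})$, so the tube contribution dominates and the global bound $C\hbar^{-d+1/3}$ follows.

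For the second assertion, note that $\mathrm{Ai}$ attains its global maximum at a unique negative point $u_\ast \approx -1.0188$ with $\mathrm{Ai}(u_\ast) > 0$. Choose $(x_\hbar, \xi_\hbar) \in \mathcal{T}$ with $u/E = u_\ast$, i.e.\ satisfying \eqref{udef} with $u = E u_\ast < 0$; then $H(x_\hbar, \xi_\hbar) < E$, so the point lies in $\{H \leq E\}$, and
\[
W_{\hbar, E_N(\hbar)}(x_\hbar, \xi_\hbar) = \frac{2}{(2\pi\hbar)^d}\left(\frac{\hbar}{2E}\right)^{1/3}\bigl(\mathrm{Ai}(u_\ast) + o(1)\bigr).
\]
Since the scaling formula in Theorem \ref{SCALINGCOR-old} is uniform for $u$ in compact sets, any other $u \neq u_\ast$ gives a strictly smaller leading constant, so the supremum is asymptotically attained at the Airy maximum. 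The main technical point is handling the transition between the Bessel and Airy zones near the boundary of $\mathcal{T}$; this is why $C_0$ is chosen large, so that Theorem \ref{SCALINGCOR-old}'s error bound $(1+|u|)^{1/4} u^2 \hbar^{2/3}$ (for $u < 0$) is still small and the Bessel estimate is applied only where $H_E$ is bounded away from $1$ by at least $C_0 \hbar^{2/3}/E$.
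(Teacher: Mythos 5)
Your strategy is exactly the one the paper indicates: the proposition ``is proved by combining the estimates of Theorem \ref{SCALINGCOR-old}, Theorem \ref{BESSELPROP} and Proposition \ref{EXTDECAY}'', and your three-zone decomposition with the observation that $\hbar^{-d+1/2}=o(\hbar^{-d+1/3})$ is the right way to organize that. The identification of the maximizer via the argmax of $\Ai$ at $u_\ast\approx -1.0188$, lying inside $\{H\le E\}$, is also correct.

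There is, however, a genuine gap in the zone-matching, and your proposed fix does not close it. Theorem \ref{BESSELPROP} is stated uniformly only for $H_E$ in a \emph{fixed} compact subset $[a,1-a]$ of $(0,1)$, while Theorem \ref{SCALINGCOR-old} gives a controlled bound only for $u$ in a bounded set (its error terms $(1+|u|)^{1/4}u^2\hbar^{2/3}$ and $(1+|u|)^{3/2}u\,\hbar^{2/3}$ degenerate as $|u|\to\hbar^{-1/3}$, as the paper's footnote warns). Hence the annulus $\{E(1-a)\le H\le E-C_0\hbar^{2/3}\}$ is covered by neither estimate, and this region is nonempty for small $\hbar$ no matter how large the constant $C_0$ is chosen: enlarging $C_0$ shrinks the Airy tube's complement by $O(\hbar^{2/3})$, which cannot reach down to the $\hbar$-independent threshold $E(1-a)$. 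To close the gap one needs the \emph{uniform} asymptotics of the Laguerre polynomials $L_N^{(d-1)}$ (the Frenzen--Wong expansions \cite{FW} underlying Proposition \ref{WIGNERLAGUERRE} and \cite{HZ19}), whose Bessel, trigonometric and Airy regimes genuinely overlap and remain valid up to the turning point; with those, one checks that the bound $O(\hbar^{-d+1/2})$ persists throughout the transition region and the tube still dominates. A second, smaller caveat: for the sharpness statement you should also note that the trigonometric regime's supremum $\simeq\hbar^{-d+1/2}$ is strictly smaller than $\hbar^{-d+1/3}\Ai(u_\ast)$, so the global supremum over $\{H\ge\epsilon\}$ is indeed attained (asymptotically) at the Airy peak rather than merely bounded by it.
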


Why the spike at $\rho= 0$? It is observed in \cite{HZ19} that 
$W_{\hbar, E_N(\hbar)}$ is an eigenfunction of the (essentially isotropic) \Sch operator \begin{equation} \label{WIGNEREIG3} \left(- \frac{\hbar^2}{8}   (\Delta_{\xi} + \Delta_x) + H(x,\xi)\right) W_{\hbar,E_N(\hbar)}
= E_N(\hbar) W_{\hbar, E_N(\hbar)}.  \end{equation} on $T^*\R^d$. 
 By
\cite[Lemma 10]{HZZ15},  the eigenspace spectral projections for the isotropic harmonic oscillator in dimension $d$ satisfies,
$$\Pi_{h,E}(x,x)=\lr{2\pi
  h}^{-(d-1)}\lr{2E-\abs{x}^2}^{\frac{d}{2}-1}\omega_{d-1}\lr{1+O(h)}, $$
  for a dimensional constant $\omega_d$. We apply this result to the eigenspace projections for \eqref{WIGNEREIG3} in dimension $2d$ and find that at the point $(0,0)$ its diagonal value is of order $\hbar^{-2d + 1}$.  We then express this eigenspace projection in terms of an orthonormal
  basis for the eigenspace.  From the inner product formulae \eqref{INTEGRALS}, it is seen that one of the orthonormal basis elements is  $\frac{1}{\sqrt{\dim V_{\hbar, E_N(\hbar)}}} W_{\hbar, E_N(\hbar)}$. Note that  ${\dim V_{\hbar, E_N(\hbar)}} \simeq \hbar^{-2d + 1}$ in dimension $2d$. Due to the normalization and \eqref{BUP}, 
  $$\frac{1}{\sqrt{\dim V_{\hbar, E_N(\hbar)}}} W_{\hbar, E_N(\hbar)}(0,0) \simeq \hbar^{-2 d + 1 + d -\half} = \hbar^{-d + \half}. $$
    
There exists a simple spectral geometric explanation for the order of magnitude at the origin: All eigenfunctions of \eqref{WIGNEREIG3} with the exception of the radial eigenfunction  $W_{\hbar, E_N(\hbar)}(0,0)$ vanish at the origin $(0,0)$ since they transform by non-trivial characters
  of $U(d)$ and $(0,0)$ is a fixed point of the action. Consequently, the value of the eigenspace projection on the diagonal at $(0,0)$ is the square of
   $W_{\hbar, E_N(\hbar)}(0,0)$ and that accounts precisely for the order of growth.
  

\subsection{Sums of eigenspace projections}

Let us begin by introducing the three types of spectral localization we are studying and the interfaces in each type. \bigskip
 
   \begin{itemize}
   
   \item (i)  $\hbar$-localized Weyl sums over eigenvalues in an $\hbar$-window  $E_N(\hbar) \in  [E - a \hbar, E + b \hbar] $ of width $O(\hbar)$. More generally we consider smoothed Weyl sums $W_{\hbar, E, f} $  with weights $f(\hbar^{-1}(E_N(\hbar) - E))$; see \eqref{UhfDEF} for such $\hbar$-energy localization. This is the scale of individual spectral projections but is substantially more general than the results of \cite{HZ19}. The scaling and asymptotics are in Theorem \ref{ELEVELLOC}. For general \Sch operators, $\hbar$- localization around a single energy level leads to expansions in terms of periodic orbits. Since all orbits of the classical isotropic oscillator are periodic, the asymptotics may be stated without reference to them. The generalization to all \Sch operators  will be studied in a future article. \bigskip


   \item (ii)  Airy-type $\hbar^{2/3}$-spectrally localized Weyl sums $W_{\hbar, f, 2/3}(x, \xi)$  
 over eigenvalues in a window $[E - a \hbar^{2/3}, E + a \hbar^{2/3}]$ of width $O(\hbar^{2/3})$. See Definition \ref{INTERDEF} for the precise definition. The levelset $\Sigma_E$ is viewed as the interface. The scaling asympotics of its Wigner distribution across the interface are given in Theorems \ref{RSCOR} and \ref{SHARPh23INTER}. To our knowledge, this scaling has not previously been considered in spectral asymptotics.
\bigskip

   \item (iii) Bulk Weyl sums  $\sum_{N: \hbar(N +\frac{d}{2}) \in [E_1,E_2]} W_{\hbar, E_N(\hbar)}(x, \xi)$ over energies in an $\hbar$-independent  `window'  $[E_1, E_2]$ of eigenvalues;   this `bulk' Weyl sum runs over $\simeq \hbar^{-1}$ distinct eigenvalues; See Definition \ref{BULKDEF}. We are mainly interested in its scaling asymptotics around the interface $\Sigma_{E_2}$ (see Theorem \ref{BULKSCALINGCOR}). However, we also prove that the Wigner distribution  approximates the indicator function of the shell $\{E_1 \leq H \leq E_2\} \subset T^* \R^d$ (see Proposition \ref{pp:PBK-leading}). As far as we know, this is also a new result and many details are rather subtle because of oscillations inside the energy shell. Indeed, the results of \cite{HZ19} show that the indvidual terms in the sum grow like $W_{\hbar, E_N(\hbar)}(x,\xi)\simeq \hbar^{-d+1/2}$ when $H(x,\xi)\in (E_1,E_2).$ Proposition \ref{pp:PBK-leading}, in constrast, shows although the bulk Weyl sums have $\simeq \hbar^{-1}$ such terms, their sum has size $\hbar^{-d}$, implying significant cancellation. \bigskip

   \end{itemize}

  We are particularly interested in `interface asymptotics' of the Bulk Wigner-Weyl distributions $W_{\hbar, f, \delta(\hbar)}$ around the edge (i.e. boundary) of the spectral interval when $(x, \xi)$ is near  the corresponding classical energy surface $\Sigma_E$.  Such edges occur when $f$ is discontinuous, e.g. the indicator function of an interval. In other words, we integrate the empirical measures \eqref{EMPDEF} below over an interval rather than against a Schwartz test function.  At the interface, there is an abrupt change in the asymptotics with a conjecturally universal shape. Theorem \ref{ELEVELLOC} gives the  shape of the interface for $\hbar$-localized sums, Theorem \ref{RSCOR} gives the shape for $\hbar^{2/3}$ localized sums and Theorem \ref{BULKSCALINGCOR} gives results on the bulk sums.   

Our results concern asymptotics of integrals of various types of test functions against  the weighted empirical measures,
\begin{equation} \label{EMPDEF} d\mu_{\hbar}^{(x, \xi)}(\tau): =
\sum_{N =0}^{\infty} W_{\hbar, E_N(\hbar)}(x, \xi) \delta_{E_N(\hbar)}(\tau),
\end{equation} and of  recentered and rescaled versions of these measures
 (see \eqref{mu2/3}  below). A  key property of  Wigner distributions of eigenspace projections \eqref{WIGNERDEF1} is  that the measures \eqref{EMPDEF} are  signed, reflecting the fact that Wigner distributions take both positive and negative values, and are  of infinite mass:

\begin{prop}\label{INFINITEPROP} The signed measures  \eqref{EMPDEF} 
are of infinite mass (total variation norm). On the other hand,  the mass of \eqref{EMPDEF} is finite on any one-sided interval of the form, $[-\infty, \tau]$.
Also,
$\int_{\R} d\mu_{\hbar}^{(x, \xi)}  = 1$ for all $(x, \xi). $\end{prop}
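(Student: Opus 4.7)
The three assertions have different flavors, and I would address them in sequence. The total-mass identity follows from completeness of the Hermite basis; finite mass on a half-line is immediate from the discreteness and lower-boundedness of the spectrum; and infinite total variation requires an asymptotic lower bound on individual Wigner values, which is where essentially all the work lives.

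For the total mass, I would use the operator identity $\sum_{N\ge 0}\Pi_{\hbar,E_N(\hbar)} = I_{L^2(\R^d)}$ (completeness of the Hermite basis), whose Schwartz kernel is $\delta(x-y)$. Since the Wigner transform $\wcal_\hbar$ is continuous on $\scal'(\R^d\times\R^d)$, applying it to both sides after a distributionally-convergent regularization of the partial sums gives
\[
\sum_{N\ge 0} W_{\hbar,E_N(\hbar)}(x,\xi) = \int_{\R^d}\delta(v)\, e^{-iv\cdot\xi/\hbar}\,\frac{dv}{(2\pi\hbar)^d},
\]
which is a constant in $(x,\xi)$ -- the content of claim (3) up to the normalization convention fixed by \eqref{E:Wignera}. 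An equivalent purely computational route uses the Laguerre formula of Proposition \ref{WIGNERLAGUERRE} together with the generating identity $\sum_{N\ge 0}L_N^{(d-1)}(y)\,t^N = (1-t)^{-d}e^{-yt/(1-t)}$ evaluated Abel-summably at $t\to -1^+$; the sum collapses to $2^{-d}e^{-2H/\hbar}\cdot e^{2H/\hbar}$, giving the same constant and thereby verifying independence of $(x,\xi)$.

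Finiteness of $\mu_\hbar^{(x,\xi)}\bigl((-\infty,\tau]\bigr)$ is immediate: since $E_N(\hbar)=\hbar(N+d/2)$ grows linearly in $N$, the set $\{N:E_N(\hbar)\le \tau\}$ is finite, so the restricted measure is a finite linear combination of point masses with weights given by the smooth, pointwise-finite function \eqref{E:Wigner-sp}. For the infinite-total-variation claim, fix $(x,\xi)$ with $H(x,\xi)>0$ and apply the Plancherel--Rotach (Bessel-regime) asymptotic for Laguerre polynomials of fixed argument and large degree,
\[
L_N^{(d-1)}(y) = \frac{N^{(d-1)/2-1/4}}{\sqrt{\pi}}\,y^{-(d-1)/2-1/4}\,e^{y/2}\cos\!\bigl(2\sqrt{Ny}-\tfrac{(d-1)\pi}{2}-\tfrac{\pi}{4}\bigr)\bigl(1+O(N^{-1/2})\bigr),
\]
with $y=4H(x,\xi)/\hbar$. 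The factor $e^{y/2}$ exactly cancels the $e^{-2H/\hbar}$ prefactor in \eqref{E:Wigner-sp}, leaving a pointwise lower bound $|W_{\hbar,E_N(\hbar)}(x,\xi)|\gtrsim c_{x,\xi,\hbar}\,N^{d/2-3/4}|\cos(2\sqrt{Ny}-\phi_d)|$. Weyl equidistribution of $\{\sqrt{Ny}\,\mathrm{mod}\,\pi\}$, or an elementary counting argument, ensures $|\cos(\cdots)|\ge 1/2$ on a positive-density subset of $\N$, so $\sum_N|W_{\hbar,E_N(\hbar)}(x,\xi)|$ dominates a divergent multiple of $\sum_N N^{d/2-3/4}$. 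The remaining case $H(x,\xi)=0$ is handled from the closed form \eqref{BUP}, where $|W_{\hbar,E_N(\hbar)}(0,0)|=\binom{N+d-1}{d-1}/(\pi\hbar)^d$ diverges polynomially.

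The main obstacle is the mild tension between (1) and (3): the signed series defining the total mass does not converge absolutely, so the termwise distributional identity in the total-mass step must be justified carefully -- for instance by Abel regularization (inserting a factor $t^N$ and letting $t\to -1^+$) and interchanging the limit with the continuous Wigner transform. Once this bookkeeping is done, the three assertions fit together consistently: massive cancellation between positive and negative values of $W_{\hbar,E_N(\hbar)}$ compresses an infinite total variation into a finite constant net mass.
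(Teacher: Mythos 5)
Your proof is correct and follows essentially the route the paper indicates: the Laguerre representation of Proposition \ref{WIGNERLAGUERRE} combined with the fixed-argument Plancherel--Rotach asymptotics to show $\sum_N|W_{\hbar,E_N(\hbar)}(x,\xi)|$ diverges (with the $H=0$ case read off from \eqref{BUP}), discreteness and lower-boundedness of the spectrum for the half-line claim, and completeness of the Hermite basis (equivalently the Laguerre generating function, Abel-summed at $t\to-1^+$) for the total mass. The only point worth noting is that your computation yields the constant $(2\pi\hbar)^{-d}$ rather than $1$; this agrees with the $\delta$-kernel calculation, so the proposition's ``$\int_{\R}d\mu_{\hbar}^{(x,\xi)}=1$'' presupposes normalizing the measure by $(2\pi\hbar)^{d}$, a convention discrepancy you correctly flag rather than a gap in your argument.
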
 
 Moreover,  the $L^2$ norms of the  terms $W_{\hbar, E_N(\hbar)}$ grows in $N$ like $N^{\frac{d-1}{2}}$. Hence, the measures \eqref{EMPDEF} are highly oscillatory and the summands can be very large. 
   
\begin{figure}
\begin{center}
  \includegraphics[width=.5 \textwidth]{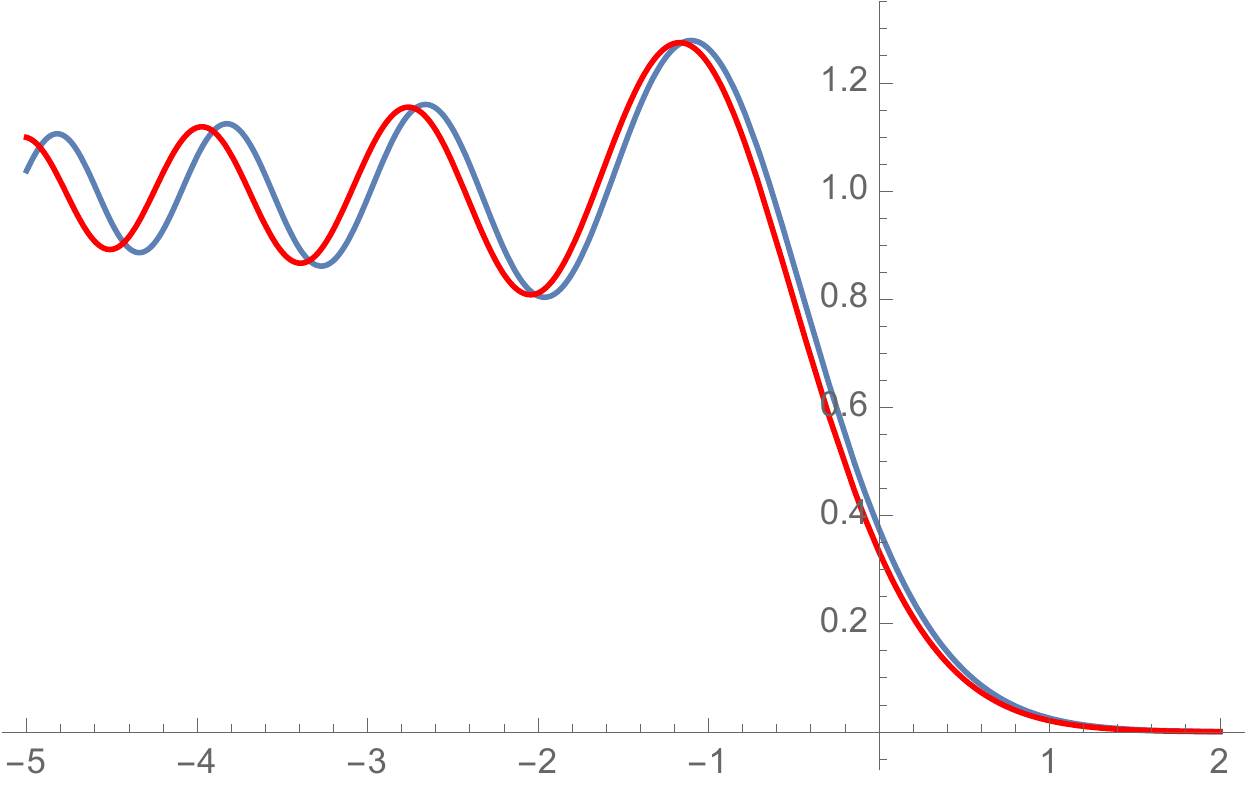} 
\end{center}
\caption{\label{fig:bulk-Wigner-Airy} Plot with $\hbar\approx 0.02,\,E=1/2$ of scaled bulk Wigner-Weyl sum $(2\pi\hbar)^d W_{\hbar, [0,E]}(x,\xi)$ when $H(x,\xi)=E+u(\hbar/2E)^{2/3}$ as a function of $u$ (blue) against its integrated Airy limit $\int_0^\infty \mathrm {Ai}(\lambda + u/E)d\lambda$ (red) from Theorem \ref{BULKSCALINGCOR}. }

\end{figure}




%

\subsection{\label{hSCSECT} Interior asymptotics for  $\hbar$-localized Weyl sums}
The first result we present pertains to the $\hbar$-spectrally localized Weyl sums of type (i), defined by  \begin{equation}
W_{\hbar, E, f}(x, \xi) :=\sum_N f(\hbar^{-1}(E-E_N(\hbar))) W_{\hbar, E_N(\hbar)}(x,\xi),\qquad f \in \scal(\R).\label{UhfDEF}
\end{equation}

\begin{theo} \label{ELEVELLOC} Fix $E>0$, and let $W_{\hbar, E, f} $ be the Wigner distribution as in \eqref{UhfDEF} with $f$ an even Schwartz function. If $H(x,\xi)>E,$ then $W_{\hbar, E,f}(x,\xi)=O(\hbar^\infty).$ In contrast, when $0<H(x, \xi) < E$, set $H_E:=H(x,\xi)/E$ and define
\begin{align*}
t_{+,\pm,k}&:= 4\pi k \pm 2\cos^{-1}\lr{H_E^{1/2}},\quad t_{-,\pm,k}:= 4\pi \lr{k+\frac{1}{2}} \pm 2\cos^{-1}\lr{H_E^{1/2}},\qquad k \in \Z.
\end{align*}
Fix any $\delta>0.$ Then
\[
W_{\hbar, f, E}(x,\xi)=\frac{\hbar^{-d+1}\lr{1+O_\delta(\hbar^{1-\delta})}}{(2E)^{1/2}(2\pi)^dH_E^{d/2}(H_E^{-1}-1)^{1/4}}~\sum_{\pm_1,\pm_2\in \set{+,-}} \frac{e^{\pm_2 i\lr{\frac{\pi}{4}-\frac{4E}{\hbar}}}}{\lr{\pm_1}^d}\sum_{k\in \Z} \widehat{f}(t_{\pm_1,\pm_2,k})e^{\frac{iE}{\hbar}t_{\pm_1,\pm_2,k}},
\]
where the notation $O_\delta$ means the implicit constant depends on $\delta.$
\end{theo}
\noindent Note that there are potentially an infinite number of `critical points' in the support of $\hat{f}$. 

\subsection{\label{SECT23} Interface asymptotics for smooth $\hbar^{2/3}$-localized Weyl sums}
We now consider  spectrally localized Wigner distributions  that are both spectrally localized and phase-space localized on the scale $\delta(\hbar)= \hbar^{2/3}$. They are mainly relevant when we study interface behavior around $\Sigma_E$ of Weyl sums. 

\begin{defn}\label{INTERDEF} Let $H(x, \xi) = (\norm{x}^2 + \norm{\xi}^2)/2$, and assume that $(x, \xi)$ satisfiy
  \begin{equation}
H(x,\xi) =  E + u\lr{\hbar/2E}^{2/3}.\label{E:near-shell}
\end{equation}
Let  $\delta(h) = \hbar^{2/3}$ and define the interface-localized Wigner distributions by
$$\begin{array}{lll} W_{\hbar, f, 2/3}(x, \xi): & = &   \sum_{N} f(h^{-2/3} (E-E_N(\hbar)))W_{ \hbar, E_N}(x, \xi) \;\; \end{array} $$

\end{defn}
\begin{theo} \label{RSCOR}
Assume that $(x, \xi)$ satisfies \eqref{E:near-shell} with $\abs{u}<\hbar^{-2/3}.$ Fix a Schwartz function $f \in \scal(\R)$ with compactly supported Fourier transform. Then
\[W_{\hbar, f, 2/3}(x, \xi)  = (2\pi \hbar)^{-d} I_0(u;f,E)~+~O((1+\abs{u})\hbar^{-d+2/3}),\]
where
\[I_0(u; f, E) = \int_{\R} f(-\lambda/C_E) \mathrm{Ai}\lr{\lambda + \frac{u}{E}} d\lambda,\qquad C_E=(E/4)^{1/3}.\]
More generally, there is an asymptotic expansion
\[W_{\hbar, f, 2/3}(x, \xi) ~\simeq~(2\pi\hbar)^d\sum_{m\geq 0}\hbar^{2m/3}I_m\lr{u;f,E}\]
in ascending powers of $\hbar^{2/3}$ where $I_m(u;f,E)$ are uniformly bounded when $u$ stays in a compact subset of $\R.$
\end{theo}

The calculations show that the results are valid with far less stringent conditions on $f$ than $f \in \scal(\R)$ and $\widehat{f}\in C_0^\infty$. To obtain a finite expansion and remainder it is sufficient that $\int_{\R} |\widehat{f}(t)| |t|^k dt  < \infty$ for all $k.$ It is not necessary that $\hat{f} \in C^k$ for any $k >0$. 

\subsection{Sharp $\hbar^{2/3}$-localized Weyl sums}
Next we consider the sums of Definition \ref{INTERDEF} when $f$ 
is the indicator function of a spectral interval, 
$$f = {\bf 1}_{[ \lambda_- ,\lambda_+]}.$$
Equivalently, we fix integers $0<n_{_\pm}$ such that
\[\lambda_{\pm}=\hbar^{1/3}n_{\pm}~~\text{are bounded},\]  
and consider the corresponding Wigner-Weyl sums $ W_{\hbar, f, 2/3}(x, \xi)$ of Definition \ref{INTERDEF}:
\begin{equation} \label{W23EDEF} W_{2/3,E,\lambda_{\pm}}(x,\xi):=\sum_{N:\, \lambda_- \hbar^{2/3}\leq E_N(\hbar)-E< \lambda_+\hbar^{2/3}}W_{\hbar, E_N(\hbar)}(x,\xi)=\sum_{N=N(E,\hbar)+n_-}^{N(E,\hbar)+n_+-1}W_{\hbar, E_N(\hbar)}(x,\xi)
,\end{equation}
where $N(E,\hbar)=E/\hbar - d/2$. Thus, the sums run over spectral intervals of size $\simeq \hbar^{2/3}$ centered at a fix $E>0$ and consist of sum of $\simeq \hbar^{-1/3}$ Wigner functions for spectral projections of individual eigenspaces.  The following extends Theorem \ref{RSCOR} to sharp Weyl sums at the cost of only giving a 1-term expansion plus remainder.

\begin{theo} \label{SHARPh23INTER} Assume that $(x, \xi)$ satisfies $\lr{\norm{x}^2+\norm{\xi}^2}/2 =E+ u\lr{\frac{\hbar}{2E}}^{2/3}$ with $\abs{u}<\hbar^{-2/3}.$ Then, \[W_{2/3,E,\lambda_{\pm}}(x,\xi)=(2\pi\hbar)^{-d}C_E\int_{-\lambda_+}^{-\lambda_-}\Ai\lr{\frac{u}{E}+\lambda C_E} d\lambda + O\lr{\hbar^{-d+1/3-\delta}+(1+\abs{u})\hbar^{-d+2/3-\delta}},\]
where $ C_E = (E/4)^{1/3}.$
\end{theo}
\noindent Theorem \ref{SHARPh23INTER}  can be rephrased in terms of  weighted empirical measures

\begin{equation} \label{mu2/3} d \mu^{u, E, \frac{2}{3}}_{\hbar} := \hbar^d\; \sum_{N} W_{ \hbar, E_N(\hbar)}\lr{ E + u\lr{\hbar/2E}^{2/3}} \delta_{[ \hbar^{-2/3} (E- E_N(\hbar) )]}.\end{equation} 
 obtained by centering and scaling the family \eqref{EMPDEF}. Thus, for  $(x, \xi)$ satisfying $\lr{\norm{x}^2+\norm{\xi}^2}/2 = E + u\lr{\frac{\hbar}{2E}}^{2/3},$ and for $f \in \scal(\R)$,
$$W_{\hbar, f, 2/3}(x, \xi): = \hbar^{-d} \int_{\R} f(\tau) d \mu^{u, E, \frac{2}{3}}_{\hbar}(\tau), \;\;\; W_{2/3,E,\lambda_{\pm}}(x,\xi) =   \hbar^{-d} \int_{\lambda_-}^{\lambda^+}  d \mu^{u, E, \frac{2}{3}}_{\hbar}(\tau).$$

\subsection{\label{BULKSECT} Bulk sums}
   
We next consider Weyl sums of eigenspace projections corresponding to an energy shell (or window) $[E_1, E_2]$. We consider both sharp and smoothed sums.

\begin{defn} \label{BULKDEF} Define the `bulk' Wigner distributions for an $\hbar$-independent energy window $[E_1, E_2]$ by
\begin{equation} \label{WE} W_{\hbar,[E_1, E_2]}(x, \xi): \sum_{N: E_N(\hbar) \in [E_1,E_2]} W_{\hbar, E_N(\hbar)}(x, \xi). \end{equation} 
More generally for $f \in C_b(\R)$ define
\begin{equation} \label{WEf} W_{\hbar, f}(x, \xi):= \sum_{N=1}^{\infty}
f(\hbar(N +d/2)) \; W_{\hbar, E_N(\hbar)}(x, \xi). \end{equation} 
\end{defn}
\noindent Our first result about the bulk Weyl sums concerns the smoothed Weyl sums $W_{\hbar, f}.$ 

\begin{prop} \label{smoothedbulk}For $f \in \scal(\R)$ with $\hat{f} \in C_0^{\infty}$,  $W_{\hbar, f}(x, \xi)$ admits a complete asymptotic expansion as $\hbar \to 0$ 
of the form,
$$\left\{ \begin{array}{lll} W_{\hbar, f}(x,\xi) & \simeq &  (\pi \hbar)^{-d} \sum_{j = 0}^{\infty} c_{j, f, H}(x, \xi) \hbar^j,\; \rm{with} \\&&\\c_{0, f, H} (x, \xi) 
& = & f(H(x, \xi)) = \int_{\R} \hat{f}(t) e^{i t H(x,\xi)} dt.

\end{array}\right. $$
In general $c_{k, f, H}(x, \xi)$ is a distribution of finite order on $f$ supported
at the point $(x, \xi)$.

\end{prop}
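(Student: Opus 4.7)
The plan is to exploit the fact that $\sum_N f(E_N(\hbar))\Pi_{\hbar,E_N(\hbar)}=f(\widehat{H}_{\hbar})$, combined with the Mehler formula \eqref{E:Mehler} which makes the propagator $e^{it\widehat{H}_{\hbar}}=U_h(-\hbar t)$ explicit. Since $\hat{f}\in C_0^{\infty}(\R)$, Fourier inversion $f(E)=\int_{\R}\hat{f}(t)\,e^{itE}\,dt$ and linearity of the Wigner transform give
\begin{equation*}
W_{\hbar,f}(x,\xi)=\int_{\R}\hat{f}(t)\,W_{U_h(-\hbar t)}(x,\xi)\,dt,
\end{equation*}
interchanging sum and integral being legitimate since $f$ is Schwartz while $W_{\hbar,E_N(\hbar)}(x,\xi)$ grows only polynomially in $N$ (compare Proposition \ref{WIGNERLAGUERRE} and the Laguerre growth bound).

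Next I would compute $W_{U_h(-\hbar t)}(x,\xi)$ explicitly. Substituting the Mehler kernel (with time parameter $-\hbar t$) into the Wigner transform and using the identities $\tfrac{|x+v/2|^2+|x-v/2|^2}{2}=|x|^2+|v|^2/4$ and $(x+v/2)\cdot(x-v/2)=|x|^2-|v|^2/4$, the phase becomes quadratic in $v$, and the half-angle identities $\csc s-\cot s=\tan(s/2)$, $\csc s+\cot s=\cot(s/2)$ reorganize the quadratic part. The resulting Gaussian $v$-integral is exact, and after simplification using $\sin(\hbar t)=2\sin(\hbar t/2)\cos(\hbar t/2)$ one arrives at
\begin{equation*}
W_{U_h(-\hbar t)}(x,\xi)=\frac{c_d}{(2\pi\hbar)^{d}\cos^{d}(\hbar t/2)}\,\exp\!\left(\frac{2iH(x,\xi)\tan(\hbar t/2)}{\hbar}\right),
\end{equation*}
where $c_d$ is a branch constant. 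The potential singularities at $\hbar t\in\pi+2\pi\Z$ are harmless because, for $\hbar$ small, $\hbar\cdot\mathrm{supp}(\hat{f})\subset(-\pi/2,\pi/2)$.

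Finally I would Taylor expand the amplitude $\sec^{d}(\hbar t/2)=1+\tfrac{d(\hbar t)^2}{8}+O(\hbar^{4})$ and the phase, using $\tfrac{2\tan(\hbar t/2)}{\hbar}=t+\tfrac{\hbar^{2}t^{3}}{12}+O(\hbar^{4})$, so that
\begin{equation*}
W_{U_h(-\hbar t)}(x,\xi)=\frac{c_d}{(2\pi\hbar)^{d}}\,e^{itH(x,\xi)}\sum_{k\geq 0}\hbar^{k}P_{k}\bigl(t,H(x,\xi)\bigr),
\end{equation*}
where each $P_{k}$ is a polynomial in $t$ (with $P_{0}\equiv 1$) and, since the Taylor series of $\tan$ and $\sec$ involve only odd/even powers respectively, $P_{k}=0$ for $k$ odd. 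Integrating term-by-term against $\hat{f}(t)$ converts powers of $t$ into derivatives via $\int t^{m}\hat{f}(t)e^{itH}\,dt=(-i)^{m}f^{(m)}(H)$, yielding
\begin{equation*}
c_{k,f,H}(x,\xi)=\sum_{m\leq m_{k}}a_{k,m}\bigl(H(x,\xi)\bigr)\,f^{(m)}\bigl(H(x,\xi)\bigr),
\end{equation*}
with $c_{0,f,H}(x,\xi)=f(H(x,\xi))$. This is manifestly a distribution on $f$ of finite order supported at $(x,\xi)$ through its dependence on $H(x,\xi)$.

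The main obstacle is rigorously controlling the Taylor remainder uniformly in $t$ on $\mathrm{supp}(\hat{f})$ while keeping track of the branch of $\cos^{d/2}$ (especially for odd $d$) coming from the Gaussian integration; this requires a careful choice of argument for $(-2\pi i\hbar\sin(\hbar t))^{d/2}$ and $(4\pi i\hbar\tan(\hbar t/2))^{d/2}$ consistent with the $t=0$ value $\text{Id}$ of the propagator, and a Borel-type packaging of the uniform bound $|\text{remainder}|\lesssim\hbar^{2K}\int|\hat{f}(t)|\,t^{2K+d}\,dt$ for every $K$. The finiteness of the latter integrals is exactly where the hypothesis $\hat{f}\in C_{0}^{\infty}$ is consumed; softer hypotheses of the form $\int|\hat{f}(t)|t^{k}\,dt<\infty$ for all $k$ would suffice to push the expansion to any prescribed order.
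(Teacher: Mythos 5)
Your route is the one the paper intends: the one\-/line proof ``merely involves Taylor expansion of the phase'' refers precisely to the phase $2H\tan(\hbar t/2)/\hbar$ of the Wigner transform of the Mehler propagator, and your computation of that transform (half-angle reduction, exact Gaussian $v$-integral, expansion of $\sec^d(\hbar t/2)$ and $2\tan(\hbar t/2)/\hbar$, conversion of $t^m$ into $f^{(m)}(H)$) is correct and is exactly what is carried out in the companion work. Two caveats. First, your justification of the interchange of $\sum_N$ and $\int dt$ is not valid as stated: polynomial growth of $W_{\hbar,E_N(\hbar)}(x,\xi)$ in $N$ gives absolute convergence of $\sum_N f(E_N(\hbar))W_{\hbar,E_N(\hbar)}$, but the inner sum $\sum_N e^{itE_N(\hbar)}W_{\hbar,E_N(\hbar)}(x,\xi)$ obtained after the swap is \emph{not} absolutely convergent, since by Proposition \ref{INFINITEPROP} the measures \eqref{EMPDEF} have infinite total variation. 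The correct justification is the one already built into the paper: the propagator is the boundary value of a holomorphic function in $\{\Im t<0\}$, so one works with $U_h(t-i\epsilon)$ as in \eqref{E:Projector Integral Forbidden} (Abel regularization) and lets $\epsilon\to 0$ after the Wigner transform; equivalently, one observes that $W_{U_h(s)}(x,\xi)=(2\pi\hbar)^{-d}\cos^{-d}(s/2)\exp(-2iH\tan(s/2)/\hbar)$ extends smoothly across the singular point $s=0$ of the Mehler kernel, so the $t$-integral is classical once $\hbar\,\mathrm{supp}\,\hat f\subset(-\pi,\pi)$. Second, on the constant: carrying out your Gaussian integral gives branch constant $c_d=1$, so the leading term is $(2\pi\hbar)^{-d}f(H(x,\xi))$; this is consistent with Proposition \ref{pp:PBK-leading} and with a direct check for $d=1$ via the Laguerre generating function applied to Proposition \ref{WIGNERLAGUERRE}, and indicates that the prefactor $(\pi\hbar)^{-d}$ in the statement is a normalization slip rather than something your argument must reproduce.
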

\noindent The proof merely involves
Taylor expansion of the phase. 

\subsection{Interior/exterior asymptotics for bulk Weyl sums of Definition \ref{BULKDEF}}
From Proposition \ref{smoothedbulk}, it is evident that the behavior of $W_{\hbar, [E_1,E_2]}(x,\xi)$ depends on whether $H(x, \xi) \in (E_1, E_2)$ or $H(x, \xi) \notin [E_1, E_2]$. Some of this dependence is captured in the following result. 
\begin{prop}
\label{pp:PBK-leading}  We have,
\[ 
W_{\hbar, [E_1, E_2]}(x, \xi) = \left\{
\begin{array}{ll} (i)\; 
(2\pi\hbar)^{-d} (1+O(\hbar^{1/2})), & 
 H(x, \xi) \in (E_1, E_2),\\ &\\ 
(ii) \;  O(\hbar^{-d +1/2}), & H(x, \xi) < E_1, \\&\\ 
 (iii) \;  O(\hbar^{\infty}), & H(x, \xi) > E_2
\end{array} \right.
\]
\end{prop}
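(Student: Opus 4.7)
The plan is to reduce the sharp-cutoff bulk Weyl sum to the smoothed sum treated by Proposition \ref{smoothedbulk} by bracketing $\mathbf{1}_{[E_1,E_2]}$ between smooth cutoffs, and then to control the resulting edge contributions using either the interior Bessel asymptotics of Theorem \ref{BESSELPROP} (for case (ii)) or the exterior decay of Proposition \ref{EXTDECAY} (for case (iii)). The indicator function itself does not satisfy the Fourier hypotheses of Proposition \ref{smoothedbulk}, so a direct application is not possible and the strips of $\hbar$-independent width around $E_1$ and $E_2$ must be handled separately by oscillatory-sum arguments.

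Case (iii) is immediate. If $H(x,\xi)>E_2$, then every term in the sum has $(x,\xi)$ in the classically forbidden region at energy $E_N(\hbar)$, with $H(x,\xi)/E_N(\hbar)\geq H(x,\xi)/E_2>1$ uniformly in $N$. Proposition \ref{EXTDECAY} gives an exponentially small bound $\hbar^{-d+1/2}e^{-c/\hbar}$ per term with $c>0$ independent of $N$, and summing the $O(\hbar^{-1})$ terms preserves $O(\hbar^\infty)$.

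For case (i), fix $\eta>0$ so small that $H(x,\xi)\in(E_1+\eta,E_2-\eta)$ and choose $\chi\in C_0^\infty((E_1,E_2))$ with $\chi\equiv 1$ on $[E_1+\eta,E_2-\eta]$ and $\widehat{\chi}\in C_0^\infty$. Applying Proposition \ref{smoothedbulk} to $\chi$ yields $W_{\hbar,\chi}(x,\xi)=(2\pi\hbar)^{-d}(1+O(\hbar))$, since $\chi\equiv 1$ near $H(x,\xi)$ forces every higher coefficient $c_{j,\chi,H}(x,\xi)$ to vanish. The remainder $W_{\hbar,[E_1,E_2]}-W_{\hbar,\chi}$ is a signed sum of eigenspace Wigner functions $W_{\hbar,E_N(\hbar)}(x,\xi)$ supported on $E_N(\hbar)\in[E_1,E_1+\eta]\cup[E_2-\eta,E_2]$. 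On each strip $H(x,\xi)$ lies an $O(1)$ distance from the relevant classical energy, so Theorem \ref{BESSELPROP} applies: each term equals $\hbar^{-d+1/2}$ times an oscillatory Bessel factor whose phase $\xi_{\hbar,E_N,H}$ varies in $E_N$ at rate $\hbar^{-1}$. Abel summation (equivalently a stationary-phase argument in the Fourier representation) on these strips improves the triangle-inequality bound by a factor $\hbar^{1/2}$, yielding the claimed edge error $O(\hbar^{-d+1/2})$ that matches the stated relative error $O(\hbar^{1/2})$.

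Case (ii) uses the same bracketing scheme, but now one chooses $\chi\in C_0^\infty(\R)$ so that $H(x,\xi)\notin\mathrm{supp}(\chi)$; Proposition \ref{smoothedbulk} then gives $W_{\hbar,\chi}(x,\xi)=O(\hbar^\infty)$ and the whole contribution comes from the edge sum. The main obstacle is proving the $O(\hbar^{-d+1/2})$ bound for this sum: each term has modulus $\sim\hbar^{-d+1/2}$ by Theorem \ref{BESSELPROP}, so the triangle inequality is off by a factor of $\hbar^{-1}$ and genuine cancellation must be exhibited. The key step is to insert the Fourier representation \eqref{E:Projector Integral Forbidden} together with the Mehler formula \eqref{E:Mehler}, turning the edge sum into an oscillatory double integral in $(t,\lambda)$ (with $\lambda=E_N(\hbar)$ running over an $O(1)$ strip) whose phase is nondegenerate in $\lambda$ whenever $H(x,\xi)<E_1$. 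Stationary phase in $\lambda$ contributes the extra $\hbar^{1/2}$, while the Abel-type boundary terms at the endpoints of the strip can themselves be bounded by $\hbar^{-d+1/2}$ via Theorem \ref{BESSELPROP}. Carrying out this stationary-phase analysis uniformly in $N$ and across both edges, and controlling the boundary contributions from the sharp indicator cutoff, is the principal technical difficulty.
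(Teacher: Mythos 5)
Your decomposition is genuinely different from the one the paper gestures at. The paper suggests writing $W_{\hbar,[E_1,E_2]}=W_{\hbar,[0,E_2]}-W_{\hbar,[0,E_1]}$ and proving the three cases only for half-infinite windows $[0,E]$: then (i) follows because $H<E_2$ gives $(2\pi\hbar)^{-d}(1+O(\hbar^{1/2}))$ from the first term while $H>E_1$ gives $O(\hbar^\infty)$ from the second; (ii) follows \emph{with no new cancellation analysis} because both terms equal $(2\pi\hbar)^{-d}(1+O(\hbar^{1/2}))$ and their leading parts cancel identically, leaving $O(\hbar^{-d+1/2})$; and (iii) is immediate as both terms are $O(\hbar^\infty)$. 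You instead bracket the indicator between smooth bumps and attempt a direct oscillatory estimate on the edge strips, which forces you to redo in case (ii) the cancellation work that the subtraction trick gets for free.

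There are also two concrete technical problems. First, you ask for $\chi\in C_0^\infty((E_1,E_2))$ with $\widehat{\chi}\in C_0^\infty$: by Paley--Wiener this is impossible, since a compactly supported $\chi$ has an entire Fourier transform which cannot vanish on an open set. The bracketing is rescuable, since the paper notes (after Theorem \ref{RSCOR}) that $\widehat{f}\in C_0^\infty$ is not actually needed and $\int|\widehat{f}(t)||t|^k\,dt<\infty$ suffices; but as written the hypothesis is inconsistent. Second, the arithmetic of your cancellation gain does not close. In case (ii) the edge strip of width $\eta=O(1)$ contains $\sim\eta\hbar^{-1}$ eigenvalues, each contributing $O(\hbar^{-d+1/2})$ by Theorem \ref{BESSELPROP}, so the triangle inequality gives $O(\hbar^{-d-1/2})$; you need to gain a full factor $\hbar$, not the $\hbar^{1/2}$ you attribute to ``stationary phase in $\lambda$.'' Abel summation can in principle give the required gain (the phase increment $\xi_{\hbar,E_{N+1},H}-\xi_{\hbar,E_N,H}$ is $O(1)$ and the partial geometric-type sums are bounded provided the increment stays away from $2\pi\mathbb{Z}$, leaving only boundary terms of size $\hbar^{-d+1/2}$), but you would have to verify that non-resonance condition uniformly over the strip, which is precisely the uniformity issue you flag as ``the principal technical difficulty.'' The paper's subtraction trick sidesteps this entirely, which is why it is the cleaner route.

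Case (iii) is correct as you state it, and the overall identification of the right tools (Mehler/propagator representation, Bessel asymptotics in the interior, exponential decay in the exterior) matches the toolkit the paper uses elsewhere.
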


The two `sides' $0 < H(x, \xi) < E_1$ and $H(x, \xi) > E_2$ also behave differently because the Wigner distributions have slowly decaying tails inside an energy ball but are exponentially decaying outside of it. If we write $W_{\hbar, [E_1, E_2]}(x, \xi)  = W_{\hbar, [0, E_2]}(x, \xi) -W_{\hbar, [0, E_1]}(x, \xi) $, we see that the two  cases with $H(x, \xi) > E_1$ are covered by results for $W_{\hbar, [0, E]}$ with $E = E_1 $ or $E = E_2$. When $H(x, \xi) < E_1$, then both terms of $W_{\hbar, [0, E_2]}(x, \xi) -W_{\hbar, [0, E_1]}(x, \xi) $ have the order of magnitude $\hbar^{-d}$ and the asymptotics reflect the cancellation between the terms. The boundary case where $H(x, \xi) = E_1,$ or $H(x, \xi)= E_2$ is special and is given in Theorem \ref{RSCOR}.

\subsection{Interface asymptotics for bulk  Weyl sums of Definition 
\ref{BULKDEF}}

Our final result concerns the asymptotics of $W_{\hbar, [E_1, E_2]}(x, \xi)$ in $\hbar^{\frac{2}{3}}$-tubes  around
the `interface' $H(x, \xi) = E_2$.  Again, it is suffcient to consider intervals $[0, E]$. It is at least intuitively clear that the interface asymptotics will depend only on the individual eigenspace projections with eigenvalues in an $\hbar^{2/3}$-interval around the energy level $E$, and since they add to $1$ away from the boundary point, one may expect the  asymptotics to be similar to the interface asymptotics  for individual eigenspace projections in \cite{HZ19}. 

\begin{theo}\label{BULKSCALINGCOR} 
Assume that $(x, \xi)$ satisfies
$\frac{\abs{x}^2+\abs{\xi}^2}{2} - E = u\lr{\frac{\hbar}{2E}}^{2/3}$ with $\abs{u}<\hbar^{-2/3}.$ Then, for any $\epsilon>0$ 
\[
W_{\hbar, [0, E]}(x, \xi)  =   \lr{2\pi \hbar}^{-d}  \left[\int_0^{\infty} \Ai\lr{\frac{u}{E}+\tau} d\tau +O(\hbar^{1/3-\epsilon}\abs{u}^{1/2})+ O(\abs{u}^{5/2}\hbar^{2/3-\epsilon})\right],
\]
where the implicit constant depends only on $d,\epsilon.$
\end{theo}
The Airy scaling the Wigner function is illustrated in Figure \ref{fig-Wigner-interface}.

\begin{figure}
\begin{center}
  \includegraphics[width=.6 \textwidth]{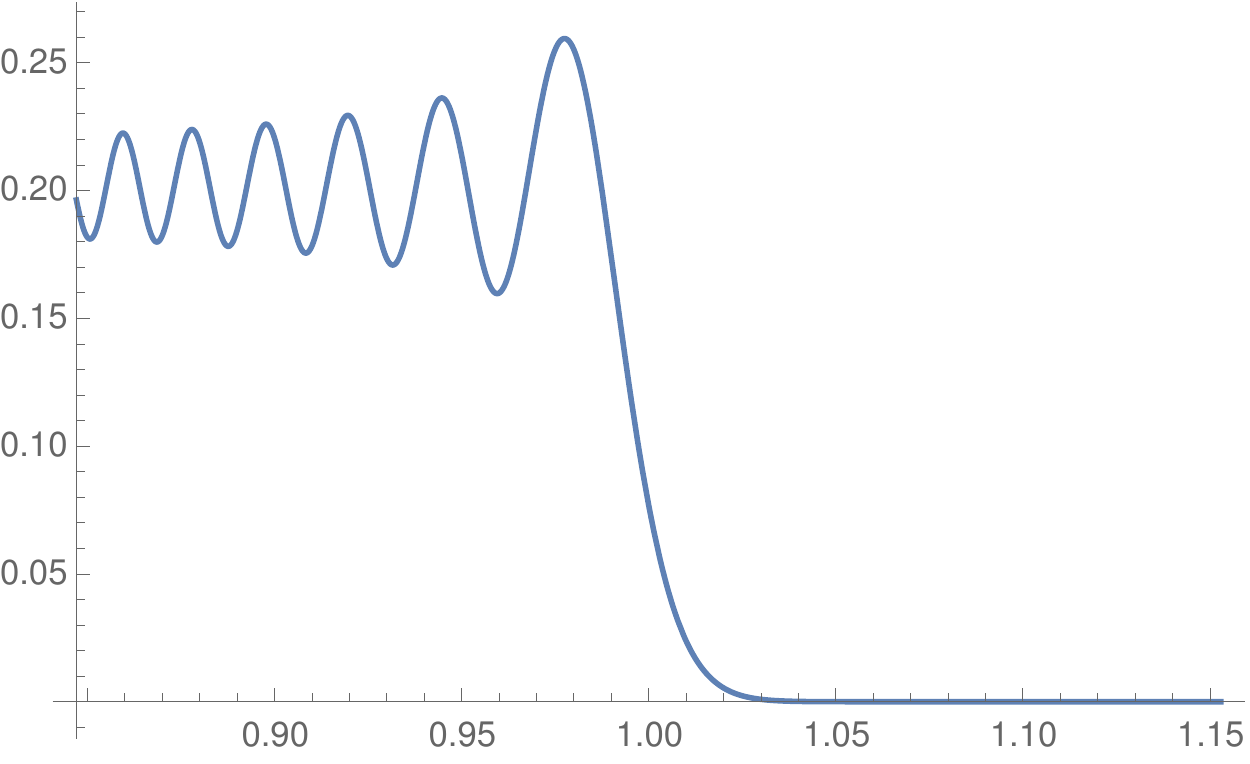} 
\end{center}
\caption{\label{fig-Wigner-interface}
 Scaling at energy surface of Wigner function of projection
  onto energy interval $[0,1/2]$.}
\end{figure}

\subsection{\label{HEURISTICSECT} Heuristics}  Wigner distributions are normalized so that the Wigner distribution of an $L^2$ normalized eigenfunction has $L^2$ norm $1$ in $T^*\R^d$. Due to the  multiplicity $N^{d-1}$ of eigenspaces \eqref{VhE}, the $L^2$ norm of $W_{\hbar, E_N(\hbar)}$ is  of order $N^{\frac{d-1}{2}}$.

  In the main results, we sum over windows of eigenvalues, e.g. $\lambda_- \hbar^{2/3}\leq E-E_N(\hbar)< \lambda_+ \hbar^{2/3}$ \eqref{W23EDEF}, resp. $E_N(\hbar) \in [0,E]$ in \eqref{BULKDEF}. Inevitably, the asymptotics are joint in $(\hbar, N)$. As $\hbar \downarrow 0$, the number of $N$ contributing to the sum  grows at the rate $\hbar^{-\frac{1}{3}}$, resp. $\hbar^{-1}$. Due to the $N$-dependence of the $L^2$ norm, terms with higher $N$ have norms of higher weight in $N$ than those of small $N$ but the precise size of the contribution depends on the position of $(x, \xi)$ relative to the interface $\{H = E\}$ and of course the relation \eqref{ENh}.
  
  $W_{\hbar, E_N(\hbar)}(x, \xi)$ peaks when $H(x, \xi) = E_N(\hbar)$, exponentially decays in $\hbar$ when $H(x, \xi) > E_N(\hbar)$ and has slowly decaying tails inside the energy ball $\{H < E_N(\hbar)\}$, which fall into three regimes: (i) Bessel near $0$, (ii) oscillatory or trigonometric in the bulk, and (iii) Airy near $\{H = E\}.$ In terms of $N$, when \eqref{ENh} holds, and $H(x, \xi)  < E_N(\hbar)$, then $W_{\hbar, E_N(\hbar)}(x, \xi) \simeq \hbar^{-d + 1/2} \simeq N^{d -1/2}$. Near the peak point, when $H(x,\xi)-E_N(\hbar)\approx \hbar^{2/3}$, we have in contrast $W_{\hbar, E_N(\hbar)}(x, \xi) \simeq \hbar^{-d + 1/3} \simeq_E N^{d - 1/3}.$
 
 It follows that the terms with a high value of $N$ and with $ E_N(\hbar) \geq H(x, \xi)$  in  \eqref{EMPDEF}  contribute high weights. There are an infinite number of such terms, and so \eqref{EMPDEF} is a signed measure of infinite mass (as stated in Proposition \ref{INFINITEPROP}.) This is why we mainly consider the restriction of the measures \eqref{EMPDEF} to compact intervals. 

\subsection{Remark on nodal sets in phase space}

In Section \ref{SCH} we discussed nodal sets of random eigenfunctions of the isotropic Harmonic oscillator.  It would also make sense
to consider nodal sets in phase space $T^*\R^d$ for Wigner distributions $W_{\Phi_{\hbar, E}}$ of random eigenfunctions
of the isotropic Harmonic oscillator. This is of interest because Wigner distributions are signed, i.e. not positive, and their
nodal sets and domains signal the extent of this `defect' in their interpretation as phase space densities.  But so far, this
has not been done. However, the covariance function is simply the Wigner distribution of the spectral projection kernels,
so the analysis of Wigner distributions and of their interfaces across energy surfaces provides the necessary techniques. 

In the next section we consider interfaces for partial Bergman kernels. The analogue in the complex domain of random
nodal sets of isotropic oscillator eigenfunctions is zero sets of random homogeneous holomorphic polynomials of fixed
degree in $\C^d$. This is essentially the same as studying such zero sets on complex projective space $\CP^{d-1}$, and to
that extent the theory has already been developed. But interface phenomena for complex zero sets has not so far been
studied.

\section{\label{pBK} Interfaces in phase space: partial Bergman kernels}

In this section, we continue to study phase space distributions of orthogonal projections, but change from the \Sch quantization
to the holomorphic quantization. The holomorphic setting consists of Berezin-Toeplitz operators acting on holomorphic sections
of line bundles over \kahler manifolds, and  is analytically simpler than the real \Sch setting. Hence we are able to present much
more general results. Instead of fixing a model \Sch operator like the isotropic Harmonic Oscillator,  we consider all possible Toeplitz
Hamiltonians on holomorphic sections of Hermitian line bundles $(L, h) \to (M, \omega)$ over all possible projective \kahler manifolds.
Here it is assumed that $i \partial \overline{\partial}  \log h = \omega$, i.e. that $(L, h)$ is a positive, ample line bundle.  For background
on Bargmann-Fock space, and on line bundles over general \kahler manifolds, we refer to Section \ref{BACKGROUND}.

Motivation to study partial Bergman kernels comes from two sources. On the one hand, they arise in many problems of complex
geometry (see \cite{Ber1, HW17,HW18,PS,RS} besides the articles surveyed here). On the other hand, they arise in the IQHE (integer
quantum Hall effect). The author's interest was stimulated by conversations with A. Abanov, S. Klevtsov and P. Wiegmann during a Simons' Center
program on complex geometry and the IQHE. We refer to \cite{W,Wieg,CFTW} for some physics articles where interfaces in the density
of states of  the IQHE are studied. It should be emphasized that there are many types of partial Bergman kernels, and the ones
most interesting in physics are still out of reach of the rigorous techniques described here. What we study here are {\it spectral
partial Bergman kernels}, i.e. orthogonal projection kernels onto spectral subspaces for Toeplitz Hamiltonians. By no means
do all  pBK's (partial Bergman
kernels) arise from spectral problems, but the spectral pBK's are the only types for which there exist general results (or almost
any results) and sometimes the pBK's of interest in the IQHE are spectral pBK's.

We do not review the basic definitions here (see Section \ref{BACKGROUND}) but head straight for the interface results. In place
of the spectral projections of the previous sections, we consider {\it partial Bergman kernels} 
on ``polarized'' \kahler manifolds $(L, h) \to (M^m, \omega, J)$, i.e. \kahler manifolds of (complex) dimension $m$
equipped with a Hermitian holomorphic line bundle  whose curvature form $F_\nabla$ for the Chern connection $\nabla$ satisfies $\omega = i F_\nabla$.
 Partial Bergman
kernels \begin{equation} \label{PIS} \Pi_{k, \scal_k} : L^2(M, L^k) \to \scal_k \subset H^0(M, L^k) \end{equation}
are Schwarz kernels for orthogonal projections onto proper subspaces $\scal_k$ of the holomorphic sections of $L^k$. 

For general subspaces, there is little one can say about the asymptotics of the partial density of states $ \Pi_{k, \scal_k}(z)$, i.e. the
contraction of the diagonal of the kernel. But
for certain sequences $\scal_k$ of subspaces, the partial density of states $ \Pi_{k, \scal_k}(z)$  has an asymptotic expansion as $k \to \infty$ which roughly gives the probability density
that a quantum state from $\scal_k$ is at the point $z$.  More concretely, in terms of  an orthonormal basis $\{s_i\}_{i=1}^{N_k}$ of $\scal_k$, the partial Bergman densities  defined by  \begin{equation} \label{DOS} \Pi_{k,\scal_k}(z) = \sum_{i=1}^{N_k} \|s_i(z)\|^2_{h^k}. \end{equation} When $\scal_k = H^0(M, L^k)$, $\Pi_{k, \scal_k} = \Pi_k: L^2(M, L^k) \to H^0(M, L^k)$ is the orthogonal (\szego
or Bergman) projection. 
We also call the ratio $\frac{\Pi_{k, \scal_k}(z)}{\Pi_k(z)}$ the partial density of states.

Corresponding to $\scal_k$ there is
an allowed region $\acal$ where the relative partial density of states $\Pi_{k, \scal_k}(z) / \Pi_k(z)$ is one, indicating
that the states in $\scal_k$ ``fill up'' $\acal$, and a forbidden region $\fcal$ where the relative density
of states is $O(k^{-\infty})$, indicating that the states in $\scal_k$ are almost zero in $\fcal$.
On the boundary $\ccal: =\partial \acal$ between the two regions there is a
shell of thickness $O(k^{-\half})$  in which the
density of states decays from $1$ to $0$. The $\sqrt{k}$-scaled  relative partial
density of states is asymptotically Gaussian along this interface, in a way reminiscent of the central limit theorem. This was proved in \cite{RS} for certain Hamiltonian holomorphic  $S^1$ actions, then in greater generality in
\cite{ZZ17}. In fact, it is a universal property of partial Bergman kernels defined by $C^{\infty}$ Hamiltonians. 

To begin with, we define the subspaces
$\scal_k$.
 They are defined as spectral subspaces for the quantization of a smooth function $H: M \to \R$. 
By  the standard (Kostant) method of geometric quantization, one can quantize $H$ as the self-adjoint  zeroth order Toeplitz operator 
\begin{equation} \label{TOEP} {H}_k:= \Pi_k (\frac{i}{k} \nabla_{\xi_H} +   H) \Pi_k: 
H^0(M, L^k) \to H^0(M, L^k) \end{equation} 
 acting on the space
$H^0(M, L^k)$ of holomorphic sections.  Here,  $\xi_H$ is the Hamiltonian vector field of $H$, $\nabla_{\xi_H}$ is the Chern covariant deriative on sections,   and $H$ acts by multiplication. We denote the eigenvalues (repeated with multiplicity)
of $\hat{H}_k$ \eqref{TOEP}  by
\begin{equation} \label{EIGV} \mu_{k,1} \leq \mu_{k,2} \leq \cdots \leq \mu_{k,N_k}, \end{equation}
where $N_k = \dim H^0(M, L^k)$, and the corresponding orthonormal eigensections in $H^0(M, L^k)$ by $s_{k,j}$.

Let $E$ be a regular value of $H$. We denote the  partial Bergman kernels for the corresponding spectral
subspaces by
\begin{equation} \label{PBKE} \Pi_{k, E} : H^0(M, L^k)
\to \hcal_{k, E}, \end{equation}
where 
\begin{equation} \label{HEk} \scal_k: = \hcal_{k, E}: = \bigoplus_{\mu_{k,j} < E}
V_{\mu_{k, j}}, \end{equation}
$\mu_{k,j} $ being the eigenvalues of ${H}_k$ and
\begin{equation} \label{EIGSP} V_{\mu_{k,j}}: = \{s \in H^0(M, L^k) : 
{H}_k s = \mu_{k, j} s\}. \end{equation} 
 We denote by $\Pi_{k,j}:H^0(M, L^k) \to V_{\mu_{k,j}}$ the orthogonal projection to $V_{\mu_{k,j}}$. The associated allowed region $\acal$ is the classical counterpart to \eqref{HEk}, and the forbidden region $\fcal$ and the interface $\ccal$ are 
\begin{equation} \label{acalDef} \acal: = \{z: H(z) < E\},  \quad \fcal = \{z: H(z) >E\}, \quad \ccal = \{z:H(z)=E\}. \end{equation}

More generally, for any  spectral interval $I \subset \R$ we define the 
partial Bergman kernels to be the orthogonal projections,
\begin{equation} \label{PBKI} \Pi_{k, I} : H^0(M, L^k)
\to \hcal_{k, I}, \end{equation}
onto the spectral subspace,
\begin{equation} \label{HEI} \hcal_{k, I}: = \z{span}\{ s_{k,j}: \mu_{k,j} \in I \}
 \end{equation}
Its (Schwartz) kernel is defined by  \begin{equation} \label{PPIKDEF} \Pi_{k, I}(z,w) = \sum_{\mu_{k,j} \in I}   s_{k,j}(z) \overline{ s_{k,j}(w)}. \end{equation} 
and the metric contraction of \eqref{PPIKDEF} on the diagonal
with respect to $h^k$ is the partial density of states,
$$\Pi_{k, I}(z)  = \sum_{\mu_{k,j} \in I}  \|s_{k,j,\alpha}(z)\|^2. $$ 
The  classical allowed region $\acal$ and forbidden region $\fcal$ are the open subsets
\[ \acal := \z{Int}(H^{-1}(I)), \quad \fcal = \z{Int}(M \RM \acal), \]
and the interface as
\[ \ccal = \pa \acal = \pa \fcal. \]
In  \cite{ZZ17} it is proved that
\[ \frac{\Pi_{k,I}(z)}{\Pi_k(z)} = \bcs
 1& \text{if } z \in \acal \\
0 & \text{if } z \in \fcal 
\ecs \mod O(k^{-\infty}),
\]

We denote by  $\Pi_k(z,w)$ and $\Pi_k(z)$  the (full) Bergman kernel and density function.  Here and throughout, we use the notation $K(z)$
for the metric contraction of the diagonal values $K(z,z)$ of a kernel.

For each $z \in \ccal$, let $\nu_z$ be the unit normal vector to $\ccal$ pointing towards $\acal$. And let $\gamma_z(t)$ be the geodesic curve  with respect to the Riemannian metric $g(X,Y) = \omega(X,JY)$ defined by the \Kahler form $\omega$, such that $\gamma_z(0) = z, \dot \gamma_z(0) = \nu_z$. For small enough $\delta>0$, the map 
\be \Phi: \ccal \times (-\delta, \delta) \to M, \quad (z, t) \mapsto  \gamma_z(t) \label{normalexp}\ee
is a diffeomorphism onto its image.

\begin{maintheo}\label{MAINTHEO}
Let $(L, h) \to (M, \omega, J)$ be a polarized \Kahler manifold. Let $H: M\to \R$ be a smooth function and $E$ a regular value of $H$. Let  $\scal_k \subset H^0(X, L^k)$ be defined as in  \eqref{HEk}. Then we have the following asymptotics on partial Bergman densities $\Pi_{k, \scal_k}(z)$:
\[   \left( \frac{\Pi_{k, \scal_k}}{\Pi_{k}} \right)(z)  = \bcs
 1& \text{if } z \in \acal \\
0 & \text{if } z \in \fcal
\ecs \mod O(k^{-\infty}).  \]
For small enough $\delta>0$, let $\Phi:\ccal \times (-\delta, \delta) \to M$ be given by \eqref{normalexp}. Then for any $z \in \ccal$ and $t \in \R$, we have
\begin{equation} \label{REM}  \left( \frac{\Pi_{k, \scal_k}}{\Pi_{k}} \right)(\Phi(z,  t/\sqrt{k}))  = \Erf(2\sqrt{\pi} t) + O(k^{-1/2}),  \end{equation}
where $\Erf(x) = \int_{-\infty}^x e^{-s^2/2} \frac{ds}{\sqrt{2\pi}}$ is the cumulative distribution function of the  Gaussian, i.e., $\P_{X \sim N(0,1)}(X<x)$.\footnote{The usual Gaussian  error function $\rm{erf}(x) = (2\pi)^{-1/2}\int_{-x}^x e^{-s^2/2} ds$
is related to Erf by  $\rm{Erf}(x) 
= \half(1 + \rm{erf}(\frac{x}{\sqrt{2}})).$}
\end{maintheo} 

\brem
The analogous result for critical levels is proved in \cite{ZZ18b}. 
We could also choose an interval $(E_1, E_2)$ with $E_i$ regular values of $H$\footnote{It does not matter whether the endpoints are included in the interval, since contribution from the eigenspaces $V_{k, \mu}$ with $\mu = E_i$ are of lower order than $k^m$.}, and define $\scal_k$ as the span of eigensections with eigenvalue within $(E_1, E_2)$. However the interval case can be deduced from the half-ray case $(-\infty, E)$ by taking difference of the corresponding partial Bergman kernel, hence we only consider allowed region of the type in \eqref{acalDef}. 
\erem
\bex \label{BFEX}
As a quick illustration, holomorphic sections of the trivial line bundle over $\C$ are holomorphic functions on $\C$. We equip the bundle with the Hermitian metric where $1$ has the norm-square $e^{-|z|^2}$. The $k$th power has metric  $e^{-k |z|^2}$  Fix  $\epsilon > 0$ and define the subspaces $\scal_k = \oplus_{j \leq \epsilon k} z^j$ of sections vanishing to order at most $\epsilon k$ at $0$, or sections with eigenvalues $\mu < \epsilon$ for operator $ H_k = \frac{1}{ik} \pa_\theta $ quantizing $H=|z|^2$. The full and partial Bergman densities are
\[ \Pi_k(z) = \frac{k}{2\pi}, \quad \Pi_{k, \epsilon}(z) = \kk \sum_{j \leq \epsilon k} \frac{k^j}{j!} |z^j|^2 e^{-k |z|^2}, \]
As $k \to \infty$, we have
\[ \lim_{k \to \infty} k^{-1} \Pi_{k, \epsilon}(z) = \bcs 1 & |z|^2 < \epsilon \\0 & |z|^2 > \epsilon. \ecs \]
For the boundary behavior, one can consider sequence $z_k$, such that $ |z_k|^2 = \epsilon(1+k^{-1/2} u),$ 
\[\lim_{k \to \infty} k^{-1} \Pi_{k, \epsilon}(z_k) =  \Erf(u). \]
\eex
This example is often used to illustrate the notion of  `filling domains' in the IQH (integer Quantum Hall) effect.  In IQH, one considers a free electron gas confined in plane $\R^2 \simeq \C$, with a uniform magnetic field in the perpendicular direction.  A one-particle electron state is said to be in the  lowest Landau level (LLL) if it has the form  $\Psi(z) = e^{-|z|^2/2} f(z)$, where $f(z)$ is holomorphic as in Example \ref{BFEX}.
The following image of the density profile  is copied from \cite{W}, where the picture on the right illustrates how the states $\frac{(\sqrt{k} \;z)^j}{\sqrt{j!}} e^{- k |z|^2/2}$ with $j \leq \epsilon k$ fill the disc of radius $\sqrt{\epsilon}$, so that the density profile drops from $1$ to $0$. 

%
\begin{figure}[h]
\begin{center} 
\includegraphics[width=0.9\textwidth]{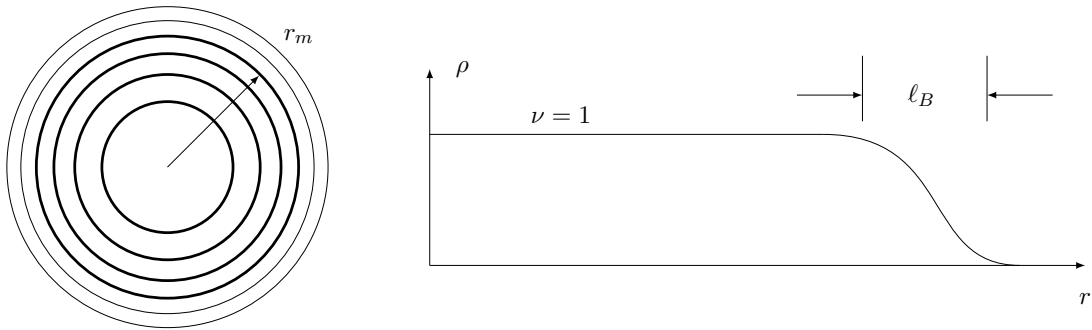} 
\end{center}
\caption{``The density profile of the $\nu=1$ droplet, where the first $m$ levels (represented by the thick lines) are filled." From Fig 7.11 in \cite{W}. }
\end{figure}
  The example is $S^1$ symmetric and therefore the simpler results of \cite{ZZ16} apply. For more general domains $D \subset \C$, it is not obvious how to fill $D$ with LLL states. The Main  Theorem answers the question when $D = \{H \leq E\}$ for some $H$.  For a physics discussion of Erf
  asymptotics and their (as yet unknown) generalization to the fractional
  QH effect, see    \cite{Wieg,CFTW}.

\subsection{\label{3SECT} Three families of measures at different scales}

The rationale for viewing the Erf asymptotics of scaled partial Bergman kernels along the interface $\ccal$ is explained by considering three different scalings of the spectral problem.

\begin{equation} \label{mukzdef} \left\{ \begin{array}{ll} 
(i) &  d\mu_k^z(x)\;\;
=  \sum_{j}
 \Pi_{k,j}(z) \delta_{\mu_{k,j}}(x), \\&\\

(ii) &   d\mu^{z, \half}_{k} (x)
=   \sum_{j}  
  \Pi_{k,j}(z)  \delta_{ \sqrt{k} (\mu_{k,j} - H(z))} (x),  \\  &\\
  
(iii) &   d\mu^{z, 1, \tau}_{k} (x)
=   \sum_{j}  
  \Pi_{k,j}( z)  \delta_{k (\mu_{k,j} - H(z)) + \sqrt{k} \tau} (x), 
  \end{array} \right.  \end{equation}
  where as usual,  $\delta_y$ is the Dirac point mass at $y \in \R$.
We use $\mu(x) = \int_{-\infty}^x d \mu(y)$ to denote the cumulative distribution function. 
  
We view these scalings as analogous to three scalings of the  convolution powers $\mu^{*k}$ of a probability measure $\mu$ supported on $[-1,1]$ (say). The third scaling (iii) corresponds to $\mu^{*k}$, which is supported on $[-k,k]$. The first scaling (i) corresponds to 
 the Law of Large Numbers, which rescales $\mu^{*k}$ back to $[-1,1]$. The second scaling (ii) corresponds to the CLT (central limit theorem) which rescales the measure to $[-\sqrt{k}, \sqrt{k}]$. 
   
Our main results give asymptotic formulae for integrals of test functions
and characteristic functions
against these measures. To obtain the remainder estimate \eqref{REM}, we need to apply semi-classical Tauberian theorems to 
$ \mu^{z, \half}_{k}$ and that forces us to find asymptotics for 
 $\mu^{z, 1,\tau}_{k}$.  

\subsection{Unrescaled bulk results on $d\mu_k^z$}

The first result is that the behavior of the partial density of states in the allowed region $\{z: H(z) < E\}$ is essentially the same as for
the full density of states, while it is rapidly decaying outside this region.

We begin with a simple and general result about partial Bergman kernels for smooth
metrics and Hamiltonians. 

\begin{theointro}\label{AFTHEO}  Let $\omega$ be a $C^{\infty}$ metric on $M$ and let $H \in C^{\infty}(M)$.  Fix a regular value $E$ of $H$ and let $\acal, \fcal, \ccal$ be given by \eqref{acalDef}. 
Then for any $f \in C^\infty(\R)$, we have
\be
\Pi_k(z)^{-1} \int_{-\infty}^E f(\lambda) d\mu_k^z(\lambda) \to \bcs
f(H(z)) &  \rm{if } z \in \acal\\
0 & \rm{if } z \in \fcal.\\
\ecs
\label{TOEPeq1}
\ee
In particular,  the  density of states of the partial Bergman kernel is given by the asymptotic
formula:
\be
\Pi_k(z)^{-1} \Pi_{k, E}(z) \sim 
\bcs
1  \mod O(k^{-\infty})  & \rm{if } z \in \acal\\
0 \mod O(k^{-\infty})  &  \rm{if } z \in \fcal.\\
\ecs 
\label{TOEPeq2}
\ee where the asymptotics are uniform on compact sets of $\acal$ or $\fcal.$
\end{theointro}

In effect, the leading order asymptotics says that the normalized measure
$\Pi_k(z)^{-1} d\mu_k^z \to \delta_{H(z)}$. 
This is a kind of Law of Large Numbers for the sequence $d\mu_k^z $.
The theorem does not specify the behavior of $\mu_k^z(-\infty, E) $
when $H(z) =  E$. The next result pertains to the edge behavior.

\subsection{$\sqrt{k}$-scaling results on $d \mu^{z, 1/2}_{k}$}

 The most interesting  behavior occurs in  $k^{-\half}$-tubes around the interface $\ccal$ between the allowed region $\acal$ and the forbidden region $\fcal$.
 For any $T>0$, the tube
of `radius' $T k^{-\half}$ around $\ccal = \{H = E\}$ is the flowout of $\ccal$ under the gradient flow  of $H$ 
\[ F^t := \exp(t \nabla H) : M \to M, \]
for $|t| < T k^{-1/2}$. 
Thus it suffices to study the partial density of states $\Pi_{k,E}(z_k)$ at points $z_k = F^{\beta/\sqrt{k}}(z_0)$ with $z_0 \in H^{-1}(E). $
The interface result for any smooth Hamiltonian is the same as if the Hamiltonian flow generate a holomorphic $S^1$-actions, and thus our result shows that it is
a universal scaling asymptotics around $\ccal$.

\begin{theointro} \label{RSCOR2} 
 Let $\omega$ be a $C^{\infty}$ metric on $M$ and let $H \in C^{\infty}(M)$.  Fix a regular value $E$ of $H$ and let $\acal, \fcal, \ccal$ be given by \eqref{acalDef}. Let $F^t: M \to M$ denote the gradient flow of $H$ by time $t$. We have the following results: 
 \begin{enumerate}
 \item For any point $z \in \ccal$, any $\beta \in \R$, and any smooth function $f \in C^\infty(\R)$, there exists a complete asymptotic expansion,
\be \label{thm-2-1}  \sum_{j} f(\sqrt{k}(\mu_{k,j} - E))  \Pi_{k,j}( F^{\beta/\sqrt{k}} (z))   \simeq  \kk^m(  I_0  + k^{-\half}
I_{1} + \cdots) , \ee
in descending powers of $k^{\half}$, with the leading coefficient as
\[I_0(f, z, \beta) =   \int_{-\infty}^\infty f(x) e^{-\left(\frac{x  }{|\nabla H|(z)|} - \beta |\nabla H(z)| \right)^2} \frac { dx}{\sqrt{\pi}  |\nabla H (z)|}.\]

\item For any point $z \in \ccal$, and any $\alpha \in \R$, the cumulative distribution function $\mu^{z,1/2}_k(\alpha)=\int_{-\infty}^\alpha d \mu^{z,1/2}_k$ is given by
\be \mu^{z,1/2}_k(\alpha)  = \sum_{\mu_{k,j} < E + \frac{\alpha}{\sqrt{k}}} \Pi_{k,j}(z) =   \kk^m \Erf\left( \frac{ \sqrt{2} \alpha}{|\nabla H(z)|} \right) + O(k^{m-1/2}). \label{eqmuhalf} \ee

\item For any point $z \in \ccal$, and any $\beta \in \R$, the Bergman kernel density near the interface is given by
\begin{equation} \label{REMEST}  \Pi_{k,E}(F^{\beta/\sqrt{k}}(z))=  \sum_{\mu_{j,k} <  E}  \Pi_{k,j}( F^{\beta/\sqrt{k}} (z)) =  \kk^m\Erf \left( -\sqrt{2}\beta |\nabla H(z)| \right)+ O(k^{m-1/2}).\end{equation}

 \end{enumerate}

\end{theointro}

\begin{rem}
The  leading power $\kk^m$ is the same as in Theorem \ref{AFTHEO}, despite the
fact that we sum over a packet of eigenvalues of width  (and cardinality) $k^{-\half}$ times the width (and cardinality) in Theorem
\ref{AFTHEO}. This is because the summands $ \Pi_{k,j}(z)$ already
localize the sum to $\mu_{k,j}$ satisfying $|\mu_{k,j} - H(z)| < C k^{-\half}$. 
\end{rem}

%


\subsection{Energy level localization and $d \mu^{z, 1,\alpha}_{k}$}

To obtain the remainder estimate for the $\sqrt{k}$ rescaled measure $d\mu^{z,1/2}_k$ in \eqref{eqmuhalf} and \eqref{REMEST} , we apply the Tauberian theorem. Roughly speaking, one approximate $d\mu^{z,1/2}_k$ by convoluting the measure with a smooth function $W_h$ of width $h$, and the difference of the two is proportional to $h$. The smoothed measure $d \mu^{z,1/2}_k * W_h$ has a density function, the value of which can be estimated by an integral of the propagator $U_k(t, z, z)$ for $|t| \sim k^{-1} / (h k^{-1/2})$. 
Thus if we choose $h = k^{-1/2}$, and $W_h$ to have Fourier transform supported in $(-\epsilon, +\epsilon)/h$, we only need to evaluted $U_k(t, z,z)$ for $|t| < \epsilon$, where $\epsilon$ can be taken to be arbitrarily small.

\begin{theointro} \label{ELLSMOOTH} 
Let $E$ be a regular value of $H$ and $z \in H^{-1}(E)$. If $\epsilon$ is small enough, such that the Hamiltonian flow trajectory starting at $z$ does not loop back to $z$ for time $|t| < 2\pi \epsilon$, then for any Schwarz function 
 $f \in \scal(\R)$ with $\hat{f}$ supported in $(-\epsilon, \epsilon)$ and $\h{f}(0) = \int f(x) dx = 1$, and for any $\alpha \in \R$ we have 
\[
\int_\R f(x) d \mu^{z, 1, \alpha}_k(x) = \kk^{m-1/2} e^{- \frac{\alpha^2}{\|\xi_H(z)\|^2}} \frac{\sqrt{2}}{2\pi \|\xi_H(z)\|}(1+ O(k^{-1/2})).
\]
\end{theointro}


\subsection{Critical levels}In this section we consider interfaces at critical levels.
 Let $H: M \to \R$ be a smooth function with Morse critical points.Henceforth, to simplify notation,
we use \kahler  local coordinates $u$ centered at $z_0$ to  write points in the $k^{-\epsilon}$ tube around $\ccal$ by 
\[ z = z_0 + k^{-\epsilon} u := \exp_{z_0}(k^{-\epsilon} u ), \quad u \in T_{z_0}. \ccal \]
The abuse of notation in dropping the higher order terms of the normal
exponential map is harmless since we are working so close to $\ccal$. 
At regular points $z_0$ we may use the exponential map along $N_{z_0} \ccal$ but we also want to consider critical points.  More generally we write $z_0+u$ for the point with \kahler normal  coordinate $u$. In these coordinates, 
\[ \omega(z_0+u) = i \sum_{j=1}^m du_j \wedge d \b u_j + O(|u|). \]
We also choose a local frame $e_L$ of $L$ near $z$, such that the corresponding $\varphi = -\log h(e_L, e_L)$ is given by
\[ \varphi(z_0+u) = |u|^2 + O(|u|^3).\] 
See \cite{ZZ17} for more on such adapted frames and Heisenberg coordinates.

Clearly, the formula \eqref{thm-2-1} breaks down at critical points and near such points  on critical levels.   
Our main goal in this paper is to generalize the interface asymptotics to  the case when the Hamiltonian
is a Morse function and the  interface $\ccal = \{H = E\}$ is a critical level, so that $\ccal$ contains a  non-degenerate critical point $z_c$ of $H$.
To allow for non-standard scaling asymptotics, we study  the smoothed partial Bergman density near the critical value $E=H(z_c)$,
\[ \Pi_{k,E, f,\delta}(z) := \sum_{j} \|s_{k,j}(z)\|^2 \cdot f(k^\delta(\mu_{k,j}-E)) \]
where $f \in \scal(\R)$ with Fourier transform $\h f \in C^\infty_c(\R)$, and $0\leq \delta \leq 1$.  This is the smooth analog of summing over eigenvalues within $[E-k^{-\delta}, E+k^{-\delta}]$.

The behavior of the scaled density of states is encoded in the following 
measures,
\begin{equation} \label{2SpM} \left\{ \begin{array}{l}  d \mu_k^z (x) = \sum_{j} \|s_{k,j}(z)\|^2 \,\delta_{\mu_{k,j}}(x), \\ \\
 d \mu_k^{z,\delta} (x) = \sum_{j} \|s_{k,j}(z)\|^2 \,\delta_{k^{\delta}(\mu_{k,j}-H(z))}(x), \\ \\ 
 d \mu_k^{(z, u, \epsilon),  \delta} (x) = \sum_{j} \|s_{k,j}(z+k^{-\epsilon} u)\|^2 \,\delta_{k^{\delta}(\mu_{k,j}-H(z))}(x). \end{array} \right. \end{equation}
For each measure $\mu$ we denote by  $d \h \mu$  the normalized probability measure 
\[ d \h \mu(x) = \mu(\R)^{-1} d \mu(x). \]

For all $z \in M$, we have the following weak limit, reminiscent of the
law of large numbers; 
\[   \h \mu_k^z (x)  \rightharpoonup  \delta_{H(z)}(x). \]
For $z \in M$ with $dH(z) \neq 0$, \eqref{thm-2-1} shows that 
\[   \h \mu_k^{z,1/2} (x)    \rightharpoonup  e^{-  \frac{x^2}{|dH(z)|^2}} \frac{dx}{\sqrt{\pi} |dH(z)|}. \]

\subsection{Interface asymptotics at critical levels}

The next result generalizes the ERF scaling asymptotics to the critical point case. We use the following setup: Let $z_c$ be a non-degenerate Morse critical point of $H$, then for small enough $u \in \C^m$, we denote the Taylor expansion components by
\[ H(z_c+u) = E + H_2(u) + O(|u|^3). \]
where 
\[ E = H(z_c), \quad H_2(u) = \half \Hess_{z_c} H(u,u). \] 

\begin{theo} \label{CRITERF}
For any $f \in \scal(\R)$ with $\h f\in C^\infty_c(\R)$,  we have
\[ \Pi_{k,E,f, 1/2}(z_c + k^{-1/4} u) := \sum_{j} \|s_{k,j}(z_c + k^{-1/4} u)\|^2 \cdot f(k^{1/2}(\mu_{k,j}-E)) = \kk^m f(H_2(u)) + O_f(k^{m-1/4}). \]
More over, the normalized rescaled pointwise spectral measure 
\[ d \h \mu_k^{(z_c, u, 1/4), 1/2}(x):= \frac{\sum_{j} \|s_{k,j}(z_c+k^{-1/4} u)\|^2 \,\delta_{k^{1/2}(\mu_{k,j}-E)}(x)}{\sum_{j} \|s_{k,j}(z_c+k^{-1/4} u)\|^2}\] converges weakly
\[  \h \mu_k^{(z_c, u, 1/4), 1/2}(x)  \rightharpoonup \delta_{H_2(u)}(x). \]

\end{theo}
We notice that the scaling width has changed from $k^{-\half}$ to $k^{-1/4}$
due to the critical point. 
The difference in scalings raises the question of what happens if 
we scale by $k^{-\half}$ around a critical point. The result is stated
in terms of the metaplectic representation on the osculating Bargmann-Fock
space at $z_c$.

\begin{theo} \label{p:crit}
Let $1 \gg T>0$ be small enough, such that there is no non-constant periodic orbit with periods less than $T$. Then 
for any $f \in \scal(\R)$ with $\h f \in C^\infty_c((-T, T))$,  we have
\[ \Pi_{k, E, f, 1}(z_c + k^{-1/2} u) = \kk^{m} \int_\R \h f(t) \ucal(t, u) \frac{dt}{2\pi} + O(k^{m-1/2}) \]
where $\ucal(t,u)$ is the metaplectic quantization of the Hamiltonian flow of
$H_2(u)$ defined as
\[ \ucal(t, u) = (\det P)^{-1/2} \exp( \b u(P^{-1} - 1) u + u \b Q P^{-1} u/2 - \b u P^{-1} Q \b u /2). \]
Here $P=P(t), Q=Q(t)$ be complex $m \times m$ matrices such that if $u(t) = \exp(t \xi_{H_2}) u$, then
\[ \bma u(t) \\ \b u(t) \ema = \bma P(t) & Q(t) \\ \b Q(t) & \b P(t) \ema \bma u \\ \b u \ema. \]

\end{theo}

\brem 
Unlike the universal $\Erf$ decay profile in the $1/\sqrt{k}$-tube around the smooth part of $\ccal$, we cannot give the decay profile of $\Pi_{k,I}(z)$ near the critical point $z_c$. The reason is that there are eigensections that highly peak near $z_c$ and with eigenvalues clustering around $H(z_c)$. Hence it even matters whether we use $[E_1, E_2]$ or $(E_1, E_2)$. See the following case where the Hamiltonian action is holomorphic, where the peak section at $z_c$ is an eigensection, and all other eigensections vanishes at $z_c$. 
\erem

The next result pertains to Hamiltonians generating $\R$ actions, as studied in \cite{RS,ZZ16}. The Hamiltonian flow always extends
to a holomorphic $\C$ action. 

\begin{prop}  Assume 
$H$ generate a holomorphic Hamiltonian $\R$ action. The pointwise spectral measure $d \mu_k^{z_c}(x)$ is always a delta-function 
\[  \mu_k^{z_c} = \delta_{H(z_c)}(x), \quad \forall k =1,2 \cdots \]
Equivalently,  for any spectral interval $I$,
\[ \lim_{k \to \infty}\Pi_{k,I}(z_c) = \bcs 1 & E \in I \\ 0 & E \notin I \ecs. \]
\end{prop}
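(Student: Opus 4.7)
The plan is to use that $H$ generating a holomorphic Hamiltonian action forces the Toeplitz operator $\hat H_k$ to coincide exactly with the Kostant prequantization operator $\frac{i}{k}\nabla_{\xi_H}+H$ on the whole space $H^0(M,L^k)$, with no projection needed. The reason is that when the flow of $\xi_H$ consists of biholomorphisms whose lift to $L^k$ is by holomorphic bundle automorphisms, the first-order operator $\frac{i}{k}\nabla_{\xi_H}+H$ already preserves holomorphic sections, so applying $\Pi_k$ afterwards is redundant. I would cite this fact rather than reprove it (cf.\ the discussion in \cite{ZZ16,Z97} of the Toeplitz construction of Hamiltonian flows).

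Once that identification is in hand, I would evaluate the eigenvalue equation $\hat H_k s_{k,j}=\mu_{k,j}s_{k,j}$ pointwise at the critical point $z_c$. Because $dH(z_c)=0$ and $\omega$ is non-degenerate, we have $\xi_H(z_c)=0$; the covariant-derivative term therefore contributes nothing at $z_c$, and the equation collapses to the fiber-algebraic identity
\[
\bigl(H(z_c)-\mu_{k,j}\bigr)\,s_{k,j}(z_c)=0 \qquad\text{in }L^k_{z_c}.
\]
Hence for every eigensection with $\mu_{k,j}\neq E=H(z_c)$ one must have $s_{k,j}(z_c)=0$, and in particular $\|s_{k,j}(z_c)\|_{h^k}^2=0$. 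Substituting this into the definition $d\mu_k^{z_c}(x)=\sum_j\|s_{k,j}(z_c)\|^2\delta_{\mu_{k,j}}(x)$ annihilates every atom except the one at $E$, so the pointwise spectral measure is supported at the single point $H(z_c)$ for every finite $k$, which gives the first assertion after normalizing by the total mass $\Pi_k(z_c)$.

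The equivalent statement for spectral intervals is then immediate: $\Pi_{k,I}(z_c)=\sum_{\mu_{k,j}\in I}\|s_{k,j}(z_c)\|^2$ equals the full Bergman density $\Pi_k(z_c)$ when $E\in I$ and vanishes when $E\notin I$, so its normalized version is identically $1$ or $0$ for every $k$ (not only in the limit). The main ``obstacle'' is really just noticing that no projection is needed under the holomorphicity hypothesis; the remainder of the argument is exact pointwise linear algebra in the fiber $L^k_{z_c}$, holding at every finite $k$ rather than asymptotically. This exactness is precisely why the decay profile near $z_c$ degenerates to a Kronecker delta instead of the smooth $\Erf$ shape found at regular interface points, and it explains the preceding remark that even the distinction between open and closed spectral intervals becomes meaningful at a critical level.
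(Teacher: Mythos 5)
Your argument is correct, and it isolates exactly the mechanism the paper relies on. The paper deduces the proposition from Proposition \ref{HOLOPROP}(1) (all eigensections other than the peak section vanish at $z_c$), which it states without proof; you instead prove directly the part of that statement that is actually needed, namely that any eigensection with $\mu_{k,j}\neq H(z_c)$ vanishes at $z_c$. Your chain of reasoning is sound at every step: holomorphy of the flow makes the projections in $\hat H_k=\Pi_k(\tfrac{i}{k}\nabla_{\xi_H}+H)\Pi_k$ redundant because the prequantum operator already preserves $H^0(M,L^k)$; non-degeneracy of $\omega$ gives $\xi_H(z_c)=0$ from $dH(z_c)=0$; and tensoriality of $\nabla_{X}$ in $X$ kills the derivative term pointwise at $z_c$, leaving the fiberwise identity $(H(z_c)-\mu_{k,j})s_{k,j}(z_c)=0$. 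You are also right to flag the normalization: the displayed identity $\mu_k^{z_c}=\delta_{H(z_c)}$ should be read for the normalized measure $\hat\mu_k^{z_c}$, and then both assertions hold exactly for every finite $k$, not merely as $k\to\infty$ --- a point consistent with the remark in the paper that the $\Erf$ profile degenerates at critical points. The only thing your write-up leaves as a citation is the standard geometric-quantization fact that a holomorphic Hamiltonian flow lifts to holomorphic bundle automorphisms so that $\tfrac{i}{k}\nabla_{\xi_H}+H$ preserves holomorphic sections; that is an acceptable thing to quote in this context, and it is no larger a gap than the paper's own appeal to Proposition \ref{HOLOPROP}.
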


The above result follows immediately from: 
\bp \label{HOLOPROP}
Let $z_c$ be a Morse critical point of $H$, $E = H(z_c)$. Then 
\bnum
\item The $L^2$-normalized peak section $s_{k, z_c}(z) = C(z_c)  \Pi_k(z, z_c)$ is an eigensection of $\h H_k$ with eigenvalue $H(z_c)$. And all other eigensections orthogonal to $s_{k,z_c}$ vanishes at $z_c$. 
\item If $s_{k,j}  \in H^0(M, L^k)$ is an eigensection of $\h H_k$ with eigenvalue $\mu_{k,j} < E$, then $s_{k,j}$ vanishes on $W^+(z_c)$. 
\item If $s_{k,j}  \in H^0(M, L^k)$ is an eigensection of $\h H_k$ with eigenvalue $\mu_{k,j} > E$, then $s_{k,j}$ vanishes on $W^-(z_c)$. 
\enum
\ep
In particular, this shows the concentration of eigensection near $z_c$. Depending on whether the spectral inteval $I$ includes boundary point $H(z_c)$ or not, the partial Bergman density will differ by a large Gaussian bump of height $\sim k^m$.

\ss{Sketch of Proof}
As in \cite{ZZ17,ZZ18} the proofs involve rescaling  parametrices
for the propagator 
\begin{equation} \label{Ukt} U_k(t) = \exp i t k \hat{H}_k \end{equation}
of the Hamiltonian \eqref{TOEP}. The parametrix construction is reviewed 
in Section \ref{TQD}.
We begin by  observing that for all $z \in M$, the time-scaled propagator has pointwise
scaling asymptotics with the $k^{-\half}$ scaling:
\bp[\cite{ZZ17} Proposition 5.3] \label{ZZ17}
If $z \in M$, then for any $\tau \in \R$,
\[ \h U_k(t/\sqrt{k},\h z , \h z ) = \kk^{m} e^{i t \sqrt{k} H(z )} e^{-t^2 \frac{\| dH(z )\|^2}{4}} (1 + O(|t|^3k^{-1/2})), \]
where the constant in the error term is uniform as $t$ varies over compact subset of $\R$. 
\ep
\noindent The condition $dH(z) \neq 0$ in the original statement in \cite{ZZ17} is never used in the proof,  hence both statement and proof carry over
to the critical point case. We therefore omit the proof of this Proposition.

 We also give asymptotics for the trace of the  scaled propagator   $U_k(t/ \sqrt{k})$. It is based on stationary phase asymptotics and therefore also
 reflects the structure of the critical points.
\begin{theo}\label{TR}
If $t \neq 0$, the trace of the scaled propagator $U_k(t/ \sqrt{k}) = e^{i \sqrt{k} t \h H_k}$ admits the following aymptotic expansion
$$\begin{array}{lll}  \int_{z \in M} U_k(t/\sqrt{k}, z) d \Vol_M(z)  & = & \kk^m  (\frac{t \sqrt{k}}{4 \pi})^{-m} \sum_{z_c \in \z{crit}(H)} \frac{e^{i t \sqrt{k} H(z_c)}e^{(i\pi/4) \z{sgn}(\Hess_{z_c}(H))}}{\sqrt{|\det(\Hess_{z_c}(H))|}} \\&&\\&& \cdot (1 + O(|t|^3k^{-1/2})) \end{array} $$
where  $\z{sgn}(\Hess_{z_c}(H))$ is the signature of the Hessian, i.e. the number of its positive eigenvalues minus the number of its  negative eigenvalues.
\end{theo}

\section{\label{BFINTER} Interfaces for the Bargmann-Fock isotropic Harmonic oscillator}

We continue the discussion of Bargmann-Fock space from Section \ref{BF} by considering partial Bargmann-Fock Bergman
kernels.
In this section, we tie together the results on Wigner distributions of spectral projections for the isotropic Harmonic oscillator,
and on density of states for partial Bergman kernels associated to the natural $S^1$ action on Bargmann-Fock space. This is the most direct analogue of the \Sch 
results.

The classical Bargmann-Fock isotropic Harmonic oscillator corresponds to the degree operator on $H^0(\CP^m, \ocal(N))$. The
total space of the associated line bundle is $\C^{m+1}$. The harmonic operator
generates the standard diagonal  $S^1$ action on $\C^{m+1}$,
$$e^{i \theta} \cdot (z_1, \dots, z_{m+1}) = (e^{i n_1 \theta} z_1, \dots, e^{i n_m \theta} z_{m+1}).$$
Its Hamiltonian is $H_{\vec n}(Z) = \sum_{j = 1}^{m+1} n_j |z_j|^2. $ The critical point set of
$H_{\vec n}$ is its minimum set.

The
eigenspaces $\hcal_{k, m, N}$  consist of monomials $z^{\alpha}$ with $|\alpha| = N$. Given
the Planck constant $k$, the eigenspace projection is given by
\begin{equation} \label{HON} \Pi_{h_{BF}^k, N}(Z,W) = \sum_{|\alpha| = N} \frac{ (k Z)^{\alpha}) (k\bar{W})^{\alpha}}{\alpha!}, \end{equation}
as a kernel relative to the Bargmann-Fock Gaussian volume form.
The partial Bergman kernels arising from spectral projections of the isotropic oscillator thus have the form,
$$\Pi_{h_{BF}^k, E}  = \sum_{N: \frac{N}{k} \geq E} \Pi_{h_{BF}^k, N}(Z,W). $$

We claim that the eigenspace projector \eqref{HON} satisfies,
\begin{equation} \label{HON2} \Pi_{h_{BF}^k, N}(Z,Z) = C_{N, k,m} ||Z||^{2 N}, \end{equation}
where
$$C_{N,k,m} = \frac{p(N, m +1)}{\omega_m} \frac{k^{N }}{\Gamma(N  + m +1)}. $$
Here, $\omega_m = \rm{Vol} (S^{2m+1})$ is the surface are of the unit sphere in 
$\C^{m+1}$. Also,   
 $\dim \hcal_{k, m, N} = p(m +1, N), $
 the partition function which counts the number of ways to express $N$ as a sum
 of $m +1$ positive integers.
To prove this, we first observe that the $U(m +1)$-invariance of the Harmonic oscillator
Hamiltonian $H = ||Z||^2$ implies that 
$U^*  \Pi_{h_{BF}^k, N} U =  \Pi_{h_{BF}^k, N}$ and therefore
$ \Pi_{h_{BF}^k, N}(UZ, UZ) =  \Pi_{h_{BF}^k, N}(Z,Z).$ It follows that
$ \Pi_{h_{BF}^k, N}(Z,Z) =  F(||Z||^2)$ is radial. It is also homogeneous of degree $2 N$,
hence is a constant multiple $C_{N,k,m} ||Z||^{2 N}$ as claimed in \eqref{HON2}. The constant
is calculated from the fact  that
$$\begin{array}{lll} p(m, N) = \dim \hcal_{k,m,N} & = & \frac{k^{m+1}}{(m+1)!}  \int_{\C^{m+1}}  \Pi_{h_{BF}^k, N}(Z,Z) e^{-k ||Z||^2} dL(Z) \\&&\\& = & \omega_m C_{m}  k^{m+1}\int_0^{\infty} e^{- k \rho^2} \rho^{2 N} \rho^{2m+1} d \rho
 \\&&\\& = & \half \omega_m C_{m} k^{m+1} \int_0^{\infty} e^{- k \rho} \rho^{ N} \rho^{m} d \rho
 = \half \frac{k^{m+1}}{(m+1)!}  k^{- (N + m + 1)}\omega_m C_{m, k, N} \Gamma(N + m + 1)  . \end{array}$$
 Solving for $C_{m,k,N}$ establishes the formula.
 It also follows that the density of states is given by,
 
\begin{equation} \label{BFPBKa} \begin{array}{lll} \sum_{N \geq \epsilon k} \Pi_{h_{BF}^k, N}(Z,Z) &= &  \frac{k^{m+1}}{(m-1)! \omega_m} e^{- k ||Z||^2} \sum_{N \geq \epsilon k}
\frac{ (k ||Z||^2)^{ N} p(m, N)}{\Gamma(N + m +1)} \\&&\\& \simeq  & \frac{k^{m+1}}{(m-1)! \omega_m} e^{- k ||Z||^2} \sum_{N \geq \epsilon k}
 \frac{ (k ||Z||^2)^{ N} }{N!}, \end{array} \end{equation}
since   $p(m +1, N) \simeq \frac{1}{(m+1)!} N^{m}(1 + O(N^{-1}))$ \eqref{dimV}; also, $\Gamma(N + m +1) = (N + m)! \simeq (N + m) \cdots (N +1) N! \simeq N^m N!$. 

\section{\label{BFLB} Bargmann-Fock space of a line bundle and interface asymptotics }

In this section, we introduce a new model, the Bargmann-Fock space of an ample  line bundle $\pi: L \to M$ over a \kahler manifold, and generalize the results of the preceding section to 
 density of states for partial Bergman kernels associated to the natural $S^1$ action on the total space $L^*$ of
the dual line bundle.   We let
$X_h = \partial D_h^* \subset L^*$ be the unit $S^1$-bundle given by the boundary 
of the unit codisc bundle,  $D_h^* = \{(z, \lambda) \in L^*: |\lambda|_z < 1\}. $
We sketch the proof that  `interfaces' for the Hamiltonian generating the standard $S^1$ action on the Bargmann-Fock
space of $L$ satisfy the central limit theorem or cumulative Gaussian Erf 
interfaces as in the compact case of \cite{ZZ16}.  The Hamiltonian is simply the norm-square function $N(z, \lambda): = |\lambda|_{h_z}^2$, so the energy balls are simply the co-disc bundles 
$$D_E^* = \{(z, \lambda) \in L^*: |\lambda|_{h_z} \leq E^2\}.$$

As usual, we equivariantly lift sections $s_k \in H^0(M, L^k)$ to $ \hat{s}_k \in \hcal_k(L^*)$, which are homogeneous of degree $k$ in the 
sense that 
$$\hat{s}_k(rx) = r^k \hat{s}_k(x). $$

 \subsection{Volume forms}
  $X_h$ is a contact manifold with contact volume form $dV = \alpha \wedge (\pi^* \omega)^m$. This contact volume form 
  induces a volume form $dVol_{L^*}$ on $L^*$, generalizing the Lebesgue volume form  $dVol_{\C^m}$ in the standard Bargmann-Fock space.
 Namely, the \kahler metric $\omega_h$ of the Hermitian metric $h$ on $L$ lifts to the partial \kahler metric $\pi^* \omega_h$.  Then,
 $$\omega_{L^*} = \pi^* \omega_h + d \lambda \wedge d \bar{\lambda}$$
 is a \kahler metric on $L^*$ with potential $|\lambda|^2 e^{- \phi}$ where $\phi = \log |e_L|^2_{h_z}$ is the local \kahler potential
 on $M$.
 Since $L^* \simeq X_h \times \R_+$ we may use polar coordinates $(x, \rho)$ on $L^*$, which correspond to 
 coordinates $(z, \lambda) \in M \times \C$ in a local trivialization by $\rho = |\lambda|_{h_z}$ and $x = (z, e^{i \theta})$. Since
 $\dim_{\R} X = 2m +1$ when $\dim_{\C} M = m$, the   
 volume form on $L^*$ is given by $$dVol_{L^*}(x, \rho) = \rho^{2m +1} dV(x) d \rho. $$


We then endow $L^*$ with the (normalized) Gaussian measure analogous to \eqref{BFG},  \begin{equation} \label{BFLBG} 
d \Gamma_{m+1, \hbar} :=  \frac{ \hbar^{-(m+1)}}{\rm{Vol}(X_h) \Gamma(m+1)}
e^{- ||Z||^2/\hbar} dVol_{L^*}(Z) \end{equation}
 To check that the measure has mass $1$, we note that
 $$\int_{L^*}e^{- ||Z||^2/\hbar} dVol_{L^*}(Z)  = \rm{Vol}(X_h) \int_0^{\infty} e^{-\rho^2/ \hbar} \rho^{2m+1} d\rho
 = \rm{Vol}(X_h) \hbar^{m+1} \Gamma(m +1) . $$ Here, we denote a general point of $L^*$ by
$Z = \rho x$ with $\rho \in \R_+, x \in X_h$. In the future we put 
$$C_m(h) = \frac{1}{\rm{Vol}(X_h) \Gamma(m+1)}, $$
so that we do not have to keep track of this constant.

\begin{defin} The Bargmann-Fock space of $(L, h)$ is the Hilbert space
$$\hcal^2_{BF, \hbar}(L^*):= \bigoplus_{N=0}^{\infty} \hcal_N(L^*)$$
of  entire square integrable holomorphic functions on $L^*$
with respect to the inner product 
\begin{equation}
 \label{IPF} ||f||^2_{\hbar, BF} =: \frac{ \hbar^{-(m+1)}}{\rm{Vol}(X_h) \Gamma(m+1)}
 \int_{L^*} |f(Z)|^2 e^{- ||Z||^2/\hbar} dVol_{L^*}(Z).  \end{equation}
\end{defin}

\subsection{Orthonormal basis} 
If  $s \in H^0(M, L^k)$ then \begin{equation} \label{IP} ||\hat{s}_k||_{L^2(X_h)}  = \frac{1}{m!}
 \int_{X_h}  |\hat{s} (x )|^2dV(x)  = \int_M ||s(z)||_{h^k}^2 dV_{\omega}, \;\;   \end{equation}
 where the right side is the   inner product on $H^0(M, L^k)$, where $dV_{\omega} = \omega^m/m!$.
Let $N_k = \dim H^0(M, L^k)$ and let $\{\hat{s}_{k, j}\}_{j=1}^{N_k}$
be any orthonormal basis of $\hcal_k(L^*)$, corresponding to an orthonormal basis $\{s_{k, j}\}$ of $H^0(M, L^k)$.  We let $\hbar = k^{-1}$. We also change the notation for powers of a bundle $k \to N$ to agree with the notation for the real
Harmonic oscillator but retain the notation $\hbar = k^{-1}$. Thus, in effect, there are two semi-classical parameters: $N$ and $k$,
parallel to the parameters $N$ and $\hbar^{-1}$ for the \Sch representation of the harmonic oscillator.
 The lifts $\hat{s}_{N, j}$ of an orthonormal basis $s_{N, j}$ of $H^0(M, L^N)$ are
 orthogonal but no longer normalized. 
 
 \begin{lem} There exists a constant $c_m = (\rm{Vol}(X_h) \Gamma(m+1))^{-\half}$ so that
 $\{ c_m \hbar^{-N/2}\frac{\hat{s}_{N,j} (Z)}{\sqrt{(N + m + 1)!}}\}$ is an orthonormal basis
of $\hcal^2_{BF}$. \end{lem}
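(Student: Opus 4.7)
The plan is to verify the orthonormality claim by a direct computation of the inner product \eqref{IPF} using the decomposition $L^* \simeq X_h \times \R_+$ into polar coordinates, combined with the homogeneity of the lifts $\hat s_{N,j}$.

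First I would establish orthogonality across different degrees. Since $\hat s_{N,j}(e^{i\theta} Z) = e^{i N \theta}\hat s_{N,j}(Z)$ by the equivariance of the lift, and the measure $d\Gamma_{m+1,\hbar}$ is $S^1$-invariant (both $e^{-\|Z\|^2/\hbar}$ and $dVol_{L^*}$ are), a standard Fourier/character argument in the fiber variable gives
\[
\langle \hat s_{N,j},\,\hat s_{N',j'}\rangle_{\hbar,BF} = 0 \qquad \text{whenever } N\neq N'.
\]
For fixed $N$, I would reduce the inner product of $\hat s_{N,j}$ and $\hat s_{N,j'}$ to the $L^2(X_h)$ inner product of their restrictions to $X_h$, which by \eqref{IP} equals the $H^0(M,L^N)$ inner product of $s_{N,j}$ and $s_{N,j'}$; hence off-diagonal terms vanish.

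Next I would compute the norm $\|\hat s_{N,j}\|_{\hbar,BF}^2$. Writing $Z=\rho x$ with $x\in X_h$, $\rho\in\R_+$, so that $\|Z\|^2=\rho^2$, $dVol_{L^*}=\rho^{2m+1}dV(x)\,d\rho$, and $\hat s_{N,j}(\rho x)=\rho^N \hat s_{N,j}(x)$, we get
\[
\|\hat s_{N,j}\|_{\hbar,BF}^2
= \frac{\hbar^{-(m+1)}}{\mathrm{Vol}(X_h)\,\Gamma(m+1)}
\left(\int_{X_h}|\hat s_{N,j}(x)|^2 dV(x)\right)
\left(\int_0^\infty e^{-\rho^2/\hbar}\rho^{2N+2m+1}\,d\rho\right).
\]
By \eqref{IP} and the orthonormality of $\{s_{N,j}\}$ in $H^0(M,L^N)$, the $X_h$-integral equals $\Gamma(m+1)$. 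The radial integral is evaluated by the substitution $t=\rho^2/\hbar$, giving
\[
\int_0^\infty e^{-\rho^2/\hbar}\rho^{2N+2m+1}\,d\rho
= \tfrac{1}{2}\,\hbar^{N+m+1}\,\Gamma(N+m+1).
\]

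Combining these yields $\|\hat s_{N,j}\|_{\hbar,BF}^2 = \tfrac{1}{2\,\mathrm{Vol}(X_h)}\,\hbar^{N}\,\Gamma(N+m+1)$ (up to the combinatorial factor absorbed into the stated normalization $(N+m+1)!$), so dividing by $\hbar^{N/2}\sqrt{(N+m+1)!}$ and multiplying by $c_m$ produces a unit vector. Completeness finally follows because $\bigoplus_{N\ge 0} \hcal_N(L^*)$ is all of $\hcal^2_{BF,\hbar}(L^*)$ by the decomposition into $S^1$-isotypic subspaces of square-integrable holomorphic functions on $L^*$, and within each isotype the $\{\hat s_{N,j}\}_j$ span the full space of degree-$N$ equivariant holomorphic functions. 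The only mildly delicate step is bookkeeping the normalization constants so that the factor $c_m$ and the power of $\hbar$ match the precise normalization of the Gaussian measure \eqref{BFLBG}; beyond that, the argument is essentially a one-line Gamma-function calculation.
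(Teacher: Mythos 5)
Your approach matches the paper's: decompose $L^*\simeq X_h\times\R_+$ in polar coordinates, use the homogeneity $\hat s_{N,j}(\rho x)=\rho^N\hat s_{N,j}(x)$, factor the $X_h$-integral from the radial Gamma integral, and normalize. You also correctly keep the factor $\tfrac12$ from the substitution $t=\rho^2/\hbar$, which the paper's displayed identity $\hbar^{-(m+1)}\int_0^\infty e^{-\rho^2/\hbar}\rho^{2N+2m+1}\,d\rho=\hbar^N\Gamma(N+m+1)$ drops; that only rescales $c_m$ by $\sqrt2$ and is harmless.

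One caution about your parenthetical remark: the mismatch between your $\Gamma(N+m+1)=(N+m)!$ and the lemma statement's $(N+m+1)!$ cannot be ``absorbed into the stated normalization,'' since the ratio $(N+m+1)!/(N+m)!=N+m+1$ grows with $N$ and therefore cannot be compensated by an $N$-independent constant $c_m$. The value you computed, $(N+m)!$, is the correct one; $(N+m+1)!$ in the lemma statement is a typo, as both the paper's own proof and the immediately following Corollary \ref{BFCOR} (which has $\sqrt{(N+m)!}$) confirm.
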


\begin{proof} We have, $$\begin{array}{lll} ||\hat{s}_N||^2_{BF, \hbar} & = &  \;||\hat{s}||_{L^2(X_h)}^2 C_m \hbar^{-(m +1)} \int_0^{\infty} e^{-\rho^2/\hbar} \rho^{2N + 2m +1}  d \rho,  \\ &&\\&&= C_m  ||\hat{s}||_{L^2(X_h)}^2  \hbar^{N}
 \Gamma(N+ m +1) = C_m \hbar^{N}\; (N + m)! ||\hat{s}||_{L^2(X_h)}^2, \end{array} $$
 since $\hbar^{-(m+1)} \int_0^{\infty} e^{-\rho^2/\hbar} \rho^{2N + 2m +1}  d \rho = \hbar^{N}
 \Gamma(N+ m + 1)$. 
 Putting $c_m = C_m^{-\half}$ completes the proof.

 \end{proof}

 \begin{cor} \label{BFCOR} In the notation above, an  orthonormal basis of $\hcal^2_{BF, \hbar}(L^*)$ is given by
 $\{c_m\hbar^{-\frac{N}{2}} \frac{\hat{s}_{N, j} }{\sqrt{(N+ m)!}} \}$. \end{cor}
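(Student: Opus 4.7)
The plan is to verify three things in order: (a) orthogonality of the proposed system, (b) the correct normalization constant, and (c) completeness. Orthogonality and normalization are essentially packaged in the preceding lemma modulo a small bookkeeping issue (note that $\Gamma(N+m+1)=(N+m)!$, which reconciles the exponent in the statement with the one appearing in the proof of the lemma). Completeness is where the genuine content lies.

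For orthogonality, I would split into two cases. When $N\neq N'$, I would exploit the natural $S^1$-action on $L^*$ by fiber rotation $r_\theta(z,\lambda)=(z,e^{i\theta}\lambda)$: the lift $\hat s_{N,j}$ is by definition homogeneous of degree $N$, so $r_\theta^*\hat s_{N,j}=e^{iN\theta}\hat s_{N,j}$. Since the Gaussian measure \eqref{BFLBG} is $S^1$-invariant (both $|Z|^2=\rho^2$ and $dVol_{L^*}=\rho^{2m+1}dV\,d\rho$ are invariant), integrating $\overline{\hat s_{N,j}}\,\hat s_{N',j'}$ first in $\theta$ kills the pairing whenever $N\neq N'$. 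When $N=N'$, I would pass to polar coordinates $Z=\rho x$ with $x\in X_h$ and use \eqref{IP} to reduce the $X_h$-integral to the Fubini--Study-type inner product on $H^0(M,L^N)$, which vanishes for $j\neq j'$ by hypothesis.

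For the normalization, I would simply reuse the computation in the preceding lemma, namely
\[
\|\hat s_{N,j}\|_{BF,\hbar}^2 \;=\; C_m\,\hbar^N\,(N+m)!\,\|\hat s_{N,j}\|_{L^2(X_h)}^2 \;=\; C_m\,\hbar^N\,(N+m)!,
\]
so that multiplying by $c_m\hbar^{-N/2}/\sqrt{(N+m)!}$ with $c_m=C_m^{-1/2}=(\mathrm{Vol}(X_h)\Gamma(m+1))^{1/2}$ produces a unit vector.

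The main obstacle, and the step I would treat most carefully, is completeness, that is, showing the linear span is dense in $\hcal_{BF,\hbar}^2(L^*)$. The strategy is the standard Hardy-space argument adapted to $L^*$. Given any $F\in\hcal_{BF,\hbar}^2$, the $S^1$-action on $L^*$ decomposes $F$ into its Fourier modes $F=\sum_{N\in\Z} F_N$ where $F_N(r_\theta Z)=e^{iN\theta}F_N(Z)$; convergence is in $L^2$ with respect to \eqref{BFLBG}, and each $F_N$ is still holomorphic on $L^*$. The assumption that $F$ extends holomorphically across the zero section of $L^*$ forces $F_N\equiv 0$ for $N<0$ (negative modes would be singular along the zero section in each local trivialization). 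For each $N\geq 0$, homogeneity of degree $N$ together with holomorphicity exhibits $F_N$ as the equivariant lift of a holomorphic section of $L^N\to M$, and the Gaussian $L^2$ condition on $L^*$ reduces (after doing the radial integral in polar coordinates) to $L^2$-finiteness of the corresponding section on $M$, hence $F_N$ lifts a section in $H^0(M,L^N)$. Expanding that section in the orthonormal basis $\{s_{N,j}\}_j$ and reassembling gives the desired expansion of $F$, establishing completeness.
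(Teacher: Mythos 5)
Your proposal is correct, and its substantive step coincides with the paper's: the normalization is exactly the radial Gaussian integral of the preceding lemma, giving $\|\hat s_{N,j}\|^2_{BF,\hbar}=C_m\hbar^N(N+m)!\,\|\hat s_{N,j}\|^2_{L^2(X_h)}$. The orthogonality and completeness you verify via the $S^1$-Fourier decomposition are extra detail rather than a different route, since the paper builds both into its definition of $\hcal^2_{BF,\hbar}(L^*)$ as the orthogonal direct sum $\bigoplus_{N\ge 0}\hcal_N(L^*)$ (your mode argument is what justifies identifying that direct sum with the space of all entire $L^2$ holomorphic functions on $L^*$, a point the paper leaves implicit). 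You also correctly resolved two typos: $\Gamma(N+m+1)=(N+m)!$, and $c_m=C_m^{-1/2}=(\mathrm{Vol}(X_h)\Gamma(m+1))^{1/2}$, not the exponent $-1/2$ printed in the lemma's statement.
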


\subsection{Bargmann-Fock Bergman kernel of a line bundle}

We now define the Bargmann-Fock Bergman kernel:
\begin{defin} \label{BFDEF} The Bargmann-Fock Bergman kernel is the kernel of the  orthogonal projection, 
$$\hat{\Pi}_{BF, \hbar}: L^2(L^*) \to \hcal_{BF}(L^*), $$
with respect to the Gaussian measure $\Gamma_{m+1, \hbar}$ of the inner product  \eqref{IPF}. The density of states is the
positive measure,
$$\hat{\Pi}_{BF, \hbar}(Z, Z) d \Gamma_{m+1, \hbar}(Z) $$
\end{defin}

Let $\Pi_{h^N}: L^2(M, L^N) \to H^0(M, L^N)$ be the orthogonal projection
with respect to the inner product \eqref{IP}. It lifts to the orthogonal
projection $\hat{\Pi}_N: L^2(X_h) \to \hcal_N(X_h)$ with respect to  the inner product on $L^2(X_h)$ defined by \eqref{IP}. Again by  \eqref{IPF}, $\hat{\Pi}_N$ is equal up to the constant $C_N$  to the orthogonal
 projection $\hcal^2_{BF}(L^*) \to \hcal_N$. The next Lemma is an immediate consequence of Corollary \ref{BFCOR}.
 \begin{lem} \label{BFB1} The Bargmann-Fock Bergman kernel on $\hcal^2_{BF}(L^*)$
 is given for $Z = (z, \lambda), W = (w, \mu) \in L^*$ by
 $$\begin{array}{lll} \hat{\Pi}_{BF, \hbar}(Z, W)  : = c_m
  \sum_{N=0}^{\infty} \frac{\hbar^{-N}}{ (N + m)!} \hat{\Pi}_N(Z, W)=  C_m 
 \sum_{N=0}^{\infty} \hbar^{-N}\frac{ (\lambda \overline{\mu})^N}{ (N + m )!} \hat{\Pi}_N(z, 1, w,1), \;\; \end{array}$$
 where the equivariant kernel $\hat{\Pi}_N$ on $X_h$ is extended by
 homogeneity to $L^*$. The density of states is given by
 $$\begin{array}{lll} \hat{\Pi}_{BF,\hbar}(Z, Z)  e^{- ||Z||^2/\hbar} : & = &  c_m 
  \hbar^{-(m +1)}
  e^{- ||Z||^2/\hbar} \sum_{N=0}^{\infty} \frac{\hbar^{-N}}{ (N + m)!} \hat{\Pi}_N(Z, Z)\\ &&\\ & = &  c_m \hbar^{-(m+1)}  e^{- ||Z||^2/\hbar}\sum_{N=0}^{\infty} \hbar^{- N}\frac{ |\lambda|^{2N}}{ (N + m )!} \Pi_{h^N}(z), \;\; \end{array}$$
 where $\Pi_{h^N}(z)$ is the metric contraction of $\Pi_N(z,z)$ on $M$.
 \end{lem}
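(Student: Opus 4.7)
The plan is to read off the kernel directly from the orthonormal basis supplied by Corollary \ref{BFCOR} and then translate the identity into the various equivalent forms demanded by the lemma. Recall that for any Hilbert space $\hcal$ of functions with reproducing kernel and orthonormal basis $\{e_j\}$, the kernel is $K(Z,W) = \sum_j e_j(Z) \overline{e_j(W)}$; so I would begin by writing
\[
 \hat{\Pi}_{BF,\hbar}(Z,W) = c_m^2 \sum_{N=0}^\infty \frac{\hbar^{-N}}{(N+m)!} \sum_j \hat s_{N,j}(Z)\, \overline{\hat s_{N,j}(W)},
\]
where $\{s_{N,j}\}_j$ is an orthonormal basis of $H^0(M,L^N)$ with respect to \eqref{IP} and $\hat s_{N,j}$ denotes the degree-$N$ equivariant lift. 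Identifying $c_m^2 = C_m$ gives the constant in the statement.

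Next, I would observe that the inner sum over $j$ is exactly the Bergman-like kernel $\hat\Pi_N(Z,W)$, since $\hat\Pi_N$ is by definition the reproducing kernel of $\hcal_N(L^*)$ relative to the $L^2(X_h)$ inner product \eqref{IP}. This establishes the first displayed formula of the lemma. To produce the second displayed formula I would invoke the $N$-fold equivariance $\hat s_{N,j}(z,\lambda) = \lambda^N \hat s_{N,j}(z,1)$ for points $(z,1)\in X_h$, which yields
\[
\hat\Pi_N(Z,W) = (\lambda \bar\mu)^N\, \hat\Pi_N\bigl((z,1),(w,1)\bigr),
\]
so pulling out this factor from each summand gives the claimed expression in the notation of the paper.

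The density-of-states formulas then follow by specializing $W = Z$ and multiplying by $e^{-\|Z\|^2/\hbar}$. On the diagonal $(z,1)\in X_h$, the kernel $\hat\Pi_N((z,1),(z,1))$ reduces, under the identification of equivariant functions on $X_h$ with sections of $L^N$, to the pointwise norm squared sum $\Pi_{h^N}(z) = \sum_j \|s_{N,j}(z)\|_{h^N}^2$, which is the metric contraction of $\Pi_N(z,z)$. Substituting and collecting powers of $\hbar$ produces the second assertion.

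The computation is essentially bookkeeping, so the only mild obstacle is keeping the normalizations straight: tracking how the Gaussian normalization factor $C_m\hbar^{-(m+1)}$ from \eqref{BFLBG} combines with the $(N+m)!$ arising from the radial Gamma integral, and verifying that the constant $c_m$ from Corollary \ref{BFCOR} squares to $C_m$. Once the constants match and the homogeneity is used consistently, both formulas for $\hat\Pi_{BF,\hbar}$ and the corresponding density of states follow immediately from the Parseval-type expansion in the orthonormal basis.
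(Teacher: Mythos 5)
Your proof is correct and is exactly the argument the paper intends: the paper simply declares the lemma ``an immediate consequence of Corollary \ref{BFCOR},'' i.e.\ the Parseval expansion of the reproducing kernel in the orthonormal basis $\{c_m\hbar^{-N/2}\hat s_{N,j}/\sqrt{(N+m)!}\}$, followed by the degree-$N$ homogeneity of the lifts to pull out $(\lambda\bar\mu)^N$ and the identification of the diagonal with $\Pi_{h^N}(z)$. The one caveat is the constant: by the paper's own definitions $c_m=C_m^{-1/2}$, so the prefactor is really $c_m^2=C_m^{-1}$ rather than $C_m$; but the lemma's statement itself equates a $c_m$ prefactor with a $C_m$ prefactor, so your bookkeeping is no less consistent than the (admittedly loose) normalization conventions of this section.
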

 
 The following is the main result of this section:
 \begin{prop}\label{BFB} Let $\hbar = k^{-1}$. For $Z = (z, \lambda)$, the density of states equlas $$ \hat{\Pi}_{BF,k}(Z)  : = c_m k^{m+1} e^{- k ||Z||^2} \sum_{N=0}^{\infty} \frac{ |\lambda|^{2N}}{ (N + m )!}  k^{N} N^m [1 + O(\frac{1}{N})]  dVol_{L^*}(Z).$$

 \end{prop}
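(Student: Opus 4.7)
The plan is to reduce Proposition \ref{BFB} directly to the preceding Lemma \ref{BFB1} together with one standard input. Setting $\hbar = k^{-1}$ in Lemma \ref{BFB1} rewrites the Bargmann-Fock density of states on $L^*$ as
\[
\hat{\Pi}_{BF,k}(Z,Z)\, e^{-k\|Z\|^2} \;=\; c_m\, k^{m+1}\, e^{-k\|Z\|^2} \sum_{N=0}^\infty k^N\, \frac{|\lambda|^{2N}}{(N+m)!}\, \Pi_{h^N}(z),
\]
so the entire content of the proposition is an asymptotic replacement of the downstairs diagonal Bergman kernels $\Pi_{h^N}(z)$ on the compact polarized K\"ahler base $(M,\omega)$.

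The key input I would invoke is the Tian--Yau--Zelditch (equivalently Catlin--Zelditch) expansion for the on-diagonal Bergman kernel of a positive line bundle: there is a complete asymptotic expansion
\[
\Pi_{h^N}(z) \;=\; N^m\bigl( 1 + b_1(z)\, N^{-1} + b_2(z)\, N^{-2} + \cdots \bigr),
\]
valid uniformly in $z \in M$ because $M$ is compact, with the leading constant normalized to $1$ (any other numerical constant arising from the conventions for $\omega$ and the volume form is absorbed into $c_m$). In particular $\Pi_{h^N}(z) = N^m\,[1 + O(N^{-1})]$ with the implicit constant uniform over $M$. Substituting this into the display above yields the formula stated in Proposition \ref{BFB}.

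The only point requiring a little care is justification of termwise substitution inside the infinite series. The main sum and the remainder sum have respective summands bounded by $k^N |\lambda|^{2N} N^m/(N+m)!$ and $k^N |\lambda|^{2N} N^{m-1}/(N+m)!$; both converge absolutely, the second is smaller by a factor $N^{-1}$ than the first, and the overall Gaussian factor $e^{-k\|Z\|^2}$ ensures pointwise convergence and uniformity on compact subsets of $L^*$. Thus the remainder is genuinely of lower order than the leading series.

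The main (and only) substantive step is the appeal to TYZ; I do not anticipate a real obstacle beyond verifying that the TYZ remainder is uniform in $z$, which is standard in the compact setting. The rest of the proof is bookkeeping, which is why the author describes the argument as simpler than the corresponding statements in \cite{ZZ17,ZZ18}: the role played there by propagator parametrices and stationary phase on the contact manifold is here replaced by the closed-form identity of Lemma \ref{BFB1}, and the deep geometric input is concentrated in the single universal Bergman-kernel expansion.
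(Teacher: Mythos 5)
Your proposal is correct and follows essentially the same route as the paper: the paper's proof likewise consists of recalling the Tian--Yau--Zelditch diagonal expansion $\Pi_{h^N}(z)\simeq \frac{N^m}{m!}[1+\frac{a_1(z)}{N}+\cdots]$, substituting it termwise into Lemma \ref{BFB1}, and setting $\hbar=k^{-1}$. Your extra remarks on absorbing the normalizing constant into $c_m$ and on uniform convergence of the series only make explicit what the paper leaves implicit.
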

 
 \begin{proof}
 
 We recall that the density of states admits an asymptotic expansion,
 $$\Pi_{h^N}(z) \simeq \frac{N^m}{m!}[1 + \frac{a_1(z)}{N} + \cdots], $$ so by Lemma \ref{BFB1}, the density of states equals
 $$\hat{\Pi}_{BF,\hbar}(Z, Z) d \Gamma_{m+1, \hbar} : = c_m \hbar^{-(m+1)}   e^{- ||Z||^2/\hbar} \sum_{N=1}^{\infty} \hbar^{-N} \frac{ |\lambda|^{2N}}{ (N + m)!}N^m[1 + \frac{a_1(z)}{N} + \cdots] dVol_{L^*}(Z), $$
 where $C_m$ is a dimensional constant. Substituting $\hbar = k^{-1}$ completes the proof.
 \end{proof}
 
 We note that
 $\frac{N^m}{(N + m )!} \simeq  \frac{1}{N!}$, so that the asymptotics of Proposition \ref{BFB} agree with 
 the Bargmann-Fock case \eqref{BFPBKa}. 

\subsection{Interface asymptotics}

The Hamiltonian is the norm square of the  Hermitian metric itself, i.e.
$$H(z, \lambda) = |\lambda |^2_{h_z}. $$
The sublevel set $\{H \leq E\}$ is the disc bundle of radius $E^2$. We denote its boundary by $\Sigma_E$.   The normal direction to $\Sigma_E$ is  the gradient 
$\nabla H$ direction,   is given by the radial vector
on $L^*$ generated by the natural $\R_+$ action in the fibers dual to the $S^1$ action generated by $H$. Together, the $\R_+$
and $S^1$ actions define the standard $\C^*$ action on $L^*$ and $\nabla H = J \xi_H$ where $\xi_H = \frac{\partial}{\partial \theta}$ is the Hamilton vector
field of $H$. Thus, the asymptotics of such partial Bergman kernels falls into the $\C^*$ equivariant
setting of  \cite{ZZ16}.

We fix $E$ and consider the partial Bargmann-Fock Bergman kernel of $L^*$ with the energy interval $[0, E]$. Then as in the
standard case, the exterior interface asymptotics pertain to the sums,
\begin{equation}\label{pBKL}  \sum_{N \geq \epsilon k} \Pi_{h_{BF}^k, N}(Z,Z) = \frac{k^{m+1}}{\omega_m m!} e^{- k ||Z||^2} \sum_{N \geq \epsilon k}
\frac{ (k ||Z||^2)^{ N} N^m}{(N + m)!} [1 + \frac{a_1(z)}{N} + \cdots],\end{equation}
or to the complementary sums. Comparison with   the standard Bargmann-Fock case of \eqref{BFPBKa} shows that the agree
to leading order, due to the Bergman kernel asymptotics of the summands $\Pi_N(z, 1, z, 1)$.
 The interface asymptotics are therefore the same as on Bargmann-Fock space for the Toeplitz isotropic Harmonic oscillator, and 
are also essentially the same as in Theorem \ref{MAINTHEO}, with
 $H(z, \lambda) = |\lambda|$ and $|\nabla H(z, \lambda)| =  |\frac{\partial}{\partial \theta}| = \lambda$. We refer to orbits of
 the $\R_+$ action as radial orbits.

\begin{theo} \label{thm:interfaceBF} Let $ \Pi_{h_{BF}^k, (E, \infty]} (Z,Z)=  \sum_{N \geq E k} \Pi_{h_{BF}^k, N}(Z,Z)$. Let $Z = (z, \lambda) \in L^*$
and let $Z_E = (z, \lambda_E)  \in \Sigma_E$ with $|\lambda_E|_{h_z} = E$.
Let $Z_k = e^{ \frac{\beta}{\sqrt{k}} } \cdot Z_E = (z, e^{ \frac{\beta}{\sqrt{k}} }  \lambda_E)$ be sequence of points approaching  $(z, \lambda_E)$ along a radial $\R_+$ orbit, where $\beta \in \R$. Then, as $k \to \infty$,
\begin{equation} \label{PkEzk}
\Pi_{h_{BF}^k, (E, \infty]}(Z_k)
 = k^m \Erf\left(\sqrt{ k} \frac{E- e^{ \frac{\beta}{\sqrt{k}} } E}{E}\right) (1+O(k^{-1/2})) = k^m \Erf\left( -\beta \right)(1+ O(k^{-1/2})).   \end{equation}
\end{theo}

The proof of  Theorem \ref{thm:interfaceBF} is essentially the same as for Theorem \ref{MAINTHEO}, or better the same as in \cite{ZZ16}
for the $\C^*$ equivariant case. The only difference is that $L^*$ is of infinite volume, but this does not affect pointwise asymptotics. However,
there is a more elementary proof in this case. 
 
Let  $x = |Z_k|^2 = |\lambda|^2_{h_z} = e^{2 \frac{\beta}{\sqrt{k}}} Z_E$ with $|Z_E| = E$. It is well known that, as $k \to \infty$,
$$e^{-k x} \sum_{N \leq  k E^2} \frac{(kx)^N N^m}{(N + m)!} \;\;\; \sim  \frac{1}{\sqrt{2 \pi} x} \int_{-\infty}^{\sqrt{k}\frac{E^2 -x}{\sqrt{x}}} e^{- \frac{t^2}{2 x}}
dt. $$
 Indeed, Lemma 1 of \cite{XX2} asserts that

\begin{equation}
e^{-k x} \sum_{N =1}^{ x k +y \sqrt{k }} \frac{(kx)^N}{N!}
 \sim  \frac{1}{\sqrt{2 \pi} } \int_{- \infty}^{\frac{y}{\sqrt{x}}} e^{- \frac{t^2}{2 }} dt
 + O(\frac{A x \sqrt{3x + 1}}{\sqrt{k}((\sqrt{x} + y)^3}). 
\end{equation}
We have,   $$\sqrt{x} =  e^{\frac{\beta}{\sqrt{k}}} E  \simeq E + \frac{\beta}{\sqrt{k}} \implies \frac{E^2 - x}{\sqrt{x}} =
\frac{E^2 -e^{2 \frac{\beta}{\sqrt{k}}} E^2}{e^{\frac{\beta}{\sqrt{k}}} E} =-2 E  \frac{\beta}{\sqrt k} (1 + O(\frac{1}{\sqrt{k}})). $$

Then let $k x + y \sqrt{k} = k E^2,$  i.e. $\; \frac{y}{\sqrt{k}} = E^2 - x \simeq 2 E  \frac{\beta}{\sqrt k}$, thus
$y = 2 \beta E $,  and use $\frac{N^m}{(N + m)!} \simeq \frac{1}{N!}$ to obtain  the desired asymptotic. 

To see this asymptotic implies Theorem \ref{thm:interfaceBF}, we let $\sqrt{k}\frac{E -x}{\sqrt{x}} = \beta$ or
$\frac{E -x}{\sqrt{x}} = \frac{\beta}{\sqrt{k}}.$ Then we get
$$\Pi_{h_{BF}^k, (E, \infty]}(Z_k) \simeq k^m e^{-k  e^{\frac{\beta}{\sqrt{k}}} E} \sum_{N \leq  k E^2} \frac{(k  e^{\frac{\beta}{\sqrt{k}}} E)^N N^m}{(N + m)!} \;\;\; \sim k^m \frac{1}{\sqrt{2 \pi} x} \int_{-\infty}^{\beta} e^{- \frac{t^2}{2 E}}
dt \;(1 + O(\frac{1}{\sqrt{k}})$$

\begin{remark}
In  \cite{Sz50}, Szasz introduces the ``Szasz operator'' 
$$P_f(u, x): = e^{-x u} \sum_{n =1}^{\infty} \frac{(ux)^n}{n!} f(\frac{n}{u}), $$
and shows that, for $f \in C_b(\R)$,  $\lim_{u \to \infty} P_f(u, x) = f(x). $ If we let $f(v) = {\bf 1}_{[E, \infty]}(v) $, then $f(\frac{n}{u}) = {\bf 1}_{u \leq n E}$. Szasz's asymptotic does not apply at the point of discontinuity. Later, Mirjakan introduced the ``Szasz-Mirjakan operator'' 
\cite{Mir} 
$$P_{f, N}(u, x): = e^{-x u} \sum_{n =1}^{N} \frac{(ux)^n}{n!} f(\frac{n}{u}), $$
and Omey \cite{O} proved that if $N = N(n, x)$ with $\lim_{n \to \infty} \frac{N- nx}{\sqrt{n}} = C < \infty$ then 
$\lim_{n \to \infty} P_{f, N}(n, x) = \frac{f(x)}{\sqrt{2 \pi}} \int_{-\infty}^C e^{-\half u^2} du. $  \cite[Lemma 1]{XX2} is
a refinement of this limit formula.

\end{remark}


This asymptotic formula arises in
the analysis of  Bernstein polynomials  of  discontinuous functions with a jump, and we refer to   \cite{Ch,Lev, O,Sz50,XX2} for the analysis.

\section{Further types of interface problems}

\subsection{\label{FURTHER} Further types of interface problems}

Here are some further types of interface asymptotics:
\begin{itemize}
\item Entanglement entropy: 
Sharp spectral cutoffs involve indicator functions ${\bf 1}_{E_1, E_2}(\hat{H}_{\hbar})$ of a quantum Hamiltonian. On the other hand, one
might quantize the indicator function ${\bf 1}_{E_1, E_2}(H)$ of a classical Hamiltonian. This is obviously related but different, since the first is a projection and the second is not. Entanglement entropy is a measure
of how the second fails to be a projection and has been studied by
Charles-Estienne \cite{ChE18} and by the author (unpublished).
\bigskip

\item On a manifold $M$ with boundary $\partial M$ one may study
the spectral projections kernel $E^D_{[0, \lambda]}(x,x)$ of the Laplacian 
with Dirichlet boundary conditions. Away from $\partial M$, 
$\lambda^{-n} E^D_{[0, \lambda]}(x,x) \simeq 1$ where $n = \dim M$. Yet
$E^D_{[0, \lambda]}(x,x) = 0$ on $\partial M$. What is the shape of the drop-off from $1$ to $0$ n a boundary zone of width $\lambda^{-1}$?\bigskip

\item For the hydrogen atom Hamiltonian $\hat{H}_{\hbar}$, there is a
phase space interface $\Sigma_0 \subset T^*\R^d$ separating the bound states from the scattering states. The Hamiltonian flow is periodic on one side of $\Sigma_0$ and unbounded on the other side and parabolic on
$\Sigma_0$. The quantization of the bound state region is the discrete
spectral projection $\Pi_{\rm{disc}, \hbar}(x,y)$. How does its Wigner
distribution behave along $\Sigma_0$?\bigskip

\item Interfaces arise in the quantum Hall effect, a point process defined
by a weight $\phi$ and a Laughlin state which gives probabilities of $N$ electrons to occur in a given configuration. The Laughlin states concentrates
as $N \to \infty$ inside a `droplet'. The interface asymptotics across the droplet in dimension one have been studied in \cite{CFTW,Wieg} and others and from a mathematical point of view by Hedenmalm and Wennmann \cite{HW17,HW18}. In the next section, we discuss
higher dimensional droplets. \bigskip

\item Interfaces are studied for nonlinear equations such as the Allen-Cahn
equation, and are related to phase transition problems; see e.g. \cite{GG18} for references to the literature.
\end{itemize}

\subsection{Droplets in phase space}Let us describe droplets in more detail. 
Droplets in phase space arise as coincidence sets  in envelope problems for plurisubharmonic functions. The boundary
of such coincidence sets is the interface. In special cases, it is the same interface that we have described for spectral
interfaces. But in general, the interface is a free boundary that must be determined from the envelope, and even its
regularity is a problem. We refer to \cite{Ber1} for the origins of the theory of dimensions $> 1$.

The definition involves the inner products ${\rm Hilb}_N(h, \nu)$
induced by the data $(h, \nu)$    on the spaces  $H^0(M,
L^N)$ of holomorphic sections of powers $L^N \to M$  by
\begin{equation} \label{HILB} ||s||^2_{{\rm Hilb}_N(h, \nu)} : = \int_M |s(z)|^2_{h^N} d\nu(z). \end{equation}
We let  $h $ be a general $C^{2}$ Hermitian metric on $L$, and
  denote its positivity set by  \begin{equation} \label{M0} M(0) = \{x \in M:  \omega_{\phi}  |_{T_x M}\; \rm{has \; only\; positive\; eigenvalues} \}, \end{equation}
i.e. the set where $\omega_{\phi}$  is a positive $(1,1)$ form. 
For a compact set $K \subset M$, also define  the  {\it equilibrium
potential }  $\phi_{eq} = V^*_{h, K}$ \footnote{Both notations $\phi_{eq} $ and $ V^*_{h, K}$, and also $P_K(\phi)$, are standard and we use them interchangeably. $V^*_{h,K}$ is called the pluri-complex Green's function.}
\begin{equation}\label{EQPOT} V_{h,
K}^* (z)= \phi_{eq}(z):= \sup \{u(z): u \in PSH(M, \omega_0), 
u \leq  \phi  \; \mbox{on}\;
 K\},
\end{equation}
where $\omega_0$ is a reference \kahler metric on $M$ and $ PSH(M, \omega_0)$ are the psh functions $u$ relative to $\omega_0$,\begin{equation} \label{PSHM} PSH(M, \omega_0) = \{u \in L^1(M, \R \cup \infty): dd^c u + \omega_0  \geq 0,\;\; {\rm and}\; u \; {\rm is} \; \omega_0-u.s.c.\}. \end{equation} Further define the coincidence set,
\begin{equation} \label{D} D : = \{z \in M: \phi(z) = \phi_e(z)\}. \end{equation} The boundary $\partial D$ is the `interface' and
the problem is to determine its regularity and other properties. It carries an  {\it equilibrium measure} defined  by 
\begin{equation}\label{EQMDEF} d\mu_{\phi} = (dd^c \phi_{eq})^m/m! =  {\bf 1}_{D \cap M(0)}   (dd^c \phi)^m/m!. \end{equation}
Here, $d^c = \frac{1}{i} (\partial - \dbar)$.

Some droplets are classically forbidden regions for spectrally defined subpaces. The extent to which one may construct
a spectral problem with this property is unknown. Since the interface is usually only $C^{1,1}$, it cannot be the  level set (even
a critical level) for a smooth (Morse-Bott) Hamiltonian in general.

\section{\label{BACKGROUND} Appendix on \kahler analysis}

In this Appendix, we  give a quick review of the basic notations of \kahler analysis.
 First we introduce co-circle bundle $X \subset L^*$ for a positive Hermitian line bundle $(L,h)$, so that holomorphic sections of $L^k$ for different $k$ can all be represented in the same space of CR-holomorphic functions on $X$, $\hcal(X) = \oplus_k \hcal_k(X)$. The Hamiltonian flow $g^t$ generated by $\xi_H$ on $(M,\omega)$ lifts to a contact flow $\h g^t$ generated by $\h \xi_H$ on $X$. 
\subsection{\label{CRSECT}Holomorphic sections in $L^k$ and CR-holomorphic functions on $X$}
Let $(L,h) \to (M, \omega)$ be a positive Hermitian line bundle, $L^*$ the dual line bundle. Let 
\[ X := \{ p \in L^* \mid \|p\|_h = 1\}, \quad \pi: X \to M \]
be the unit circle bundle over $M$. 

Let $e_L \in \Gamma(U, L)$ be a non-vanishing holomorphic section of $L$ over $U$, $\varphi = -\log \|e_L\|^2$ and $\omega = i \ddbar\varphi$. We also have the following trivialization of $X$:
\be \label{X-triv} U \times S^1 \cong X|_U, (z; \theta) \mapsto e^{i\theta} \frac{e_L^*|_z}{\|e_L^*|_z\|}. \ee

$X$ has a structure of a contact manifold. Let $\rho$ be a smooth function in a neighborhood of $X$ in $L^*$, such that $\rho>0$ in the open unit disk bundle, $\rho|_X=0$ and $d\rho|_X\neq 0$. Then we have a contact one-form on $X$
\begin{equation} \label{alphadef} \alpha =- \Re(i\dbar\rho)|_X, \end{equation}
well defined up to multiplication by a positive smooth function. We fix a choice of $\rho$ by
\[ \rho(x) = - \log \|x\|_h^2, \quad x \in L^*, \]
then in local trivialization of $X$ \eqref{X-triv}, we have
\be \alpha =   d \theta   - \frac{1}{2}  d^c \varphi(z). \label{alpha-def} \ee

$X$ is also a strictly pseudoconvex CR manifold. The {\it  CR structure} on $X$ is defined as
follows:
 The kernel of $\alpha$ defines a horizontal hyperplane bundle \begin{equation} \label{HDEF} HX :=
\ker \alpha \subset TX, \end{equation}  
invariant under $J$ since $\ker \alpha = \ker d \rho \cap \ker d^c\rho$. Thus we have a splitting
\[ TX \ot \C \cong H^{1,0} X \oplus H^{0,1}X \oplus \C R.\]
A function $f: X \to \C$ is CR-holomorphic, if $df|_{H^{0,1}X} = 0$.

A holomorphic section $s_k$ of $L^k$ determines a CR-function $\h s_k$ on $X$ by
\[ \h s_k(x) := \la x^{\ot k}, s_k\ra, \quad x \in X \subset L^*. \]
Furthermore $\h s_k$ is of degree $k$ under the canonical $S^1$ action $r_\theta$ on $X$, $\h s_k(r_\theta x) = e^{i k \theta} \h s_k(x)$. The inner product on $L^2(M,L^k)$ is given by
\[ \la s_1, s_2 \ra := \int_M h^k(s_1(z), s_2(z)) d \Vol_M(z), \quad d \Vol_M = \frac{\omega^m}{m!}, \]
and inner product on $L^2(X)$ is given by
\[ \la f_1, f_2 \ra := \int_X f_1(x) \wb{f_2(x)} d \Vol_X(x), \quad d \Vol_X =  \frac{\alpha}{2\pi}\wedge\frac{(d\alpha)^m}{m!}. \]
Thus, sending $s_k \mapsto \h s_k$ is an isometry.

\subsection{\Szego kernel on $X$}
On the circle bundle $X$ over $M$, we define the orthogonal projection from $L^2(X)$ to the CR-holomorphic subspace $\hcal (X) = \h \oplus_{k \geq 0} \hcal_k(X)$, and degree-$k$ subspace $\hcal_k(X)$: 
\[ \h \Pi: L^2(X) \to \hcal(X), \quad \h \Pi_k: L^2(X) \to \hcal_k(X), \quad \hPi = \sum_{k \geq 0} \hPi_k. \]
The Schwarz kernels $\hPi_k(x,y)$ of $\hPi_k$ is called the degree-$k$ \Szego kernel, i.e. 
\[ (\hPi_k F)(x) = \int_X \hPi_k(x,y) F(y) d \Vol_X(y), \quad \forall F \in L^2(X). \]
If we have an orthonormal basis $\{\h s_{k,j}\}_j$ of $\hcal_k(X)$, then
\[ \hPi_k(x,y) = \sum_j \h s_{k,j}(x) \wb{ \h s_{k,j}(y)}. \] 

The degree-$k$ kernel can be extracted as the Fourier coefficient of $\hPi(x,y)$
\begin{equation} \label{Pikdef}  \hPi_k(x,y) = \frac{1}{2\pi} \int_0^{2\pi} \hPi(r_\theta x, y) e^{-i k \theta} d \theta. \end{equation}
We refer to \eqref{Pikdef} as the {\it semi-classical Bergman kernels}.

\subsection{Boutet de Monvel-Sj\"ostrand parametrix for the \Szego kernel}

Near the diagonal in $X \times X$, there exists a parametrix due to  Boutet de Monvel-Sj\"ostrand 
\cite{BSj} for the \Szego kernel of the form,  
\begin{equation} \label{SZEGOPIintroa}  
\hat{\Pi}(x,y) =  \int_{\R^+} e^{\sigma \h \psi(x,y)} s(x, y ,\sigma) d \sigma  + \hat{R}(x,y). 
\end{equation} 
where $\h \psi(x,y)$ is the almost-CR-analytic extension of $\h \psi(x,x)=-\rho(x) = \log \|x\|^2$, and $s(x,y,\sigma) = \sigma^m s_m(x,y) + \sigma^{m-1} s_{m-1}(x,y) + \cdots$ has a complete asymptotic expansion.  
In local trivialization \eqref{X-triv}, 
\[ \h \psi(x,y) = i (\theta_x - \theta_y) + \psi(z, w) - \half \varphi(z) - \half \varphi(w),\]
where $\psi(z,w)$ is the almost analytic extension of $\varphi(z)$.

\subsection{Lifting the Hamiltonian flow to a contact flow on $X_h$.}\label{LIFT} 
In this seection we review the definition of the  lifting of a Hamiltonian flow to a contact flow, following \cite[Section 3.1]{ZZ17}. 
Let $H: M \to \R$ be a Hamiltonian function on $(M, \omega)$. Let $\xi_H$ be the Hamiltonian vector field associated to $H$, such that $dH = \iota_{\xi_H}\omega$. 
The purpose of this section is to  lift $\xi_H$ to a contact vector field $\hat{\xi}_H$ on $X$. Let $\alpha$ denote the contact 1-form \eqref{alpha-def} on $X$, and $R$ the corresponding Reeb vector field determined by $\la \alpha, R \ra =1$ and $\iota_{R} d\alpha=0$. One can check that $R=\pa_\theta$. 

\begin{defin}
(1) The horizontal lift of $\xi_H$ is a vector field on $X$ denoted by  ${\xi}_H^h$. It is determined by 
\[ \pi_*{\xi}_H^h = \xi_H, \quad \la \alpha, \xi_H^h \ra = 0. \]
(2) The contact lift of $\xi_H$ is a vector field on $X$ denoted by  $\h {\xi}_H$. It is determined by
\[ \pi_*\h {\xi}_H = \xi_H, \quad \lcal_{\h \xi_H} \alpha = 0. \]
\end{defin}

\bl \label{xiHLEM} 
The contact lift $\h \xi_H$ is given by
\[ \hat{\xi}_H = {\xi}_H^h - H R. \]
\el

The Hamiltonian flow on $M$ generated by $\xi_H$ is denoted by $g^t$
\[ g^t: M\to M, \quad g^t = \exp(t \xi_H). \]
The contact flow on $X$ generated by $\h \xi_H$ is denoted by $\h g^t$
\[\h g^t: X \to X, \quad \h g^t = \exp(t \h \xi_H). \]

\begin{lem} \label{gtform} In  local trivialization \eqref{X-triv}, we have a useful formula for the flow, 
	  $\hat{g}^t$ has the form (see \cite[Lemma 3.2]{ZZ17}):
\[ \hat{g}^t(z, \theta) = (g^t(z), \;\; \theta 
	+   \int_0^t \half \la d^c \varphi, \xi_H \ra(g^s(z))ds  - t H(z)).  \]
\end{lem}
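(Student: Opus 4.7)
The plan is to unwind the definition of the contact lift $\hat{\xi}_H$ in the chosen local trivialization, write the resulting ODE for the flow, and integrate it by exploiting conservation of the Hamiltonian along its own flow.

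First I would compute the horizontal lift $\xi_H^h$ explicitly in the trivialization \eqref{X-triv}. Since $\pi_* \xi_H^h = \xi_H$, we may write $\xi_H^h = \xi_H + a(z,\theta)\,\partial_\theta$ for some function $a$. Using $\alpha = d\theta - \tfrac{1}{2} d^c\varphi$ and the requirement $\langle \alpha, \xi_H^h\rangle = 0$, we get
\[
a(z,\theta) = \tfrac{1}{2}\langle d^c\varphi, \xi_H\rangle(z),
\]
independent of $\theta$. Then by Lemma \ref{xiHLEM},
\[
\hat{\xi}_H = \xi_H + \left( \tfrac{1}{2}\langle d^c\varphi, \xi_H\rangle(z) - H(z) \right)\partial_\theta.
\]

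Next I would read off the flow ODE. Writing $\hat{g}^t(z,\theta) = (z(t), \theta(t))$, the $M$-component satisfies $\dot z = \xi_H(z)$, hence $z(t) = g^t(z)$ by definition of the Hamiltonian flow. The fiber component obeys
\[
\dot\theta(t) = \tfrac{1}{2}\langle d^c\varphi, \xi_H\rangle(g^t(z)) - H(g^t(z)).
\]
Now I would invoke the conservation law $H \circ g^t = H$, which holds because $\xi_H H = \{H,H\} = 0$. This lets the last term integrate trivially as $-tH(z)$, while the first term gives exactly the line integral in the statement:
\[
\theta(t) = \theta + \int_0^t \tfrac{1}{2}\langle d^c\varphi, \xi_H\rangle(g^s(z))\,ds - tH(z).
\]

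The main subtlety, though a mild one, is verifying that $\alpha$ actually takes the claimed local form $d\theta - \tfrac{1}{2} d^c\varphi$ in the trivialization \eqref{X-triv}; once this is in hand the computation is forced. A secondary point worth stating cleanly is that $a$ turns out to be $\theta$-independent, which is what makes the resulting ODE separable and allows the fiber equation to be solved by a single quadrature along the base trajectory $g^s(z)$. No stationary-phase or microlocal input is needed; the whole lemma is an exercise in unpacking definitions plus Hamiltonian conservation.
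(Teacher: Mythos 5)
Your proof is correct, and it follows the natural route — the same one taken in the cited reference \cite[Lemma 3.2]{ZZ17}, since the paper here only states the lemma and points to that reference rather than reproducing the argument. You correctly recover the horizontal lift from $\langle\alpha,\xi_H^h\rangle=0$ with $\alpha=d\theta-\tfrac12 d^c\varphi$, subtract $H\partial_\theta$ via Lemma \ref{xiHLEM}, observe that the fiber ODE is driven only by base data (hence solvable by quadrature along $g^s(z)$), and use $H\circ g^t=H$ to collapse the last term to $-tH(z)$. No gaps.
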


Since $\hat{g}^t$ preserves $\alpha$ it preserves the horizontal distribution $H(X_h) = \ker \alpha$, i.e.
\begin{equation} \label{HSPLIT} D \hat{g}^t: H(X)_x \to H(X)_{\hat{g}^t(x)}. \end{equation} It also preserves the vertical
(fiber) direction and therefore preserves the splitting $V \oplus H$ of $T X$. Its action in the vertical direction is determined by 
Lemma \ref{gtform}.  When $g^t$ is non-holomorphic, $\hat{g}^t$ is not CR holomorphic, i.e. does not preserve the horizontal complex structure $J$ or the
splitting of $H(X) \otimes \C$ into its $\pm i $ eigenspaces.

\section{Appendix}

\subsection{\label{AIRYAPP} Appendix on the Airy function}
The Airy function is defined by,

$$Ai(z) = \frac{1}{2 \pi i} \int_L e^{v^3/3 - z v} dv, $$
where $L$ is any contour that beings at a point at infinity in the sector $- \pi/2 \leq \arg (v) \leq - \pi/6$ and ends
at infinity in the sector $\pi/6 \leq \arg(v) \leq \pi/2$.  In the region $|\arg z| \leq (1 - \delta) \pi$
in $\C - \{\R_-\}$ write $v = z^{\half} + i t ^{\half}$ on the upper half of L and $v = z^{\half} - i t^{\half}$
in the lower half. Then
\begin{equation} \label{AIRYASYM} \Ai(z) = \Psi(z) e^{- \frac{2}{3} z^{3/2}}, \;
\mathrm{
with}\;
\Psi(z) \sim z^{-1/4} \sum_{j = 0}^{\infty} a_j z^{- 3j/2}, \;\; a_0 = \frac{1}{4} \pi^{-3/2}. \end{equation}

\subsection{Appendix on Laguerre functions \label{S:Laguerre}}

The Laguerre polynomials $L_k^{\alpha}(x)$  of degree $k$ and of type $\alpha$ on $[0, \infty)$  are defined by
\begin{equation}
e^{-x} x^{\alpha} L_k^{\alpha}(x) = \frac{1}{k!} \frac{d^k}{dx^k} (e^{-x}x^{k +\alpha}). \label{E:LagDef}
\end{equation}
They are solutions of the Laguerre equation(s),
$$x y'' + (\alpha + 1 -x) y(x)' + k y(x) = 0.$$

For fixed $\alpha$ they are orthogonal polyomials of $L^2(\R_+, e^{-x} x^{\alpha} dx) $. 
An othonormal basis is given by
$$\lcal_k^{\alpha}(x) = \left(\frac{\Gamma(k +1)}{\Gamma(k + \alpha+ 1)}\right)^{\half} L_k^{\alpha}(x).$$
We will have occasion to use the following generating function:
\[\sum_{k =0}^{\infty}  L_k^{\alpha}(x)  w^k = (1 -w)^{-\alpha -1} e^{- \frac{w}{1-w} x}\]

\noindent The most useful integral representation for the Laguerre functions is
\begin{equation}\label{INTnew}
e^{-x/2} L_n^{(\alpha)} (x) = \lr{-1}^n\oint \frac{e^{-\frac{x}{2}\cdot \frac{1-z}{1+z}}}{z^n\lr{1+z}^{\alpha+1}}\frac{dz}{2 \pi i z} ,
\end{equation}
where the contour encircles the origin once counterclockwise. Equivalently,
\begin{equation}\label{INT} e^{-x/2} L_n^{(\alpha)} (x) = \frac{(-1)^n}{2^{\alpha}}
\frac{1}{2 \pi i} \int^{1+} e^{- x z/2} \left( \frac{1 + z}{1 - z} \right)^{\nu/4} (1 - z^2)^{
\frac{\alpha-1}{2}} d z \end{equation}
where $\nu= 4n + \alpha + 2$ and the contour encircles $z = 1$ in the positive
direction and closes at $\Re z = \infty, |\Im z| = \mathrm{constant}$. In (5.9) of \cite{FW} the Laguerre functions are represented as the oscillatory integrals,
\begin{equation} \label{nu2}e^{- \nu t/2} L_n^{\alpha}(\nu t) = \frac{(-1)^n}{2^{\alpha}} \frac{1}{2 \pi i}
\int_{\lcal} [1 - z^2(u)]^{\frac{\alpha-1}{2}} \exp \{ \nu \left( \frac{u^3}{3} - B^2(t) u \right) \} du, \end{equation}
where $\nu = 4 n + 2 \alpha + 2$ and $B(t) $ is defined in (5.5) of \cite{FW} and and  $\lcal$ is a branch of the hyperbolic curve in the right half plane.

\end{document}